\newtheorem{thm}{Theorem}[section]
\newtheorem{lem}{Lemma}[section]
\newtheorem{cor}[thm]{Corollary}
\newtheorem{Def}{Definition}[section]
\newtheorem{prop}{Proposition}[section]
\newtheorem{claim}{Claim}[section]
\def\N{\mathcal{N}}
\def\P{\mathcal{P}}
\def\Q{\mathcal{Q}}
\def\F{\mathcal{F}}
\def\B{\mathcal{B}}
\newcommand{\si}{\sigma}
\newcommand{\sm}{\setminus}
\newcommand{\rar}{\Rightarrow}
\newcommand{\nin}{\notin}
\numberwithin{equation}{section}
\begin{document}
\title{Neighborhood Complexes of Kneser Graphs, $KG_{3,k}$}
\author{ Nandini Nilakantan\footnote{Department of Mathematics and Statistics, IIT Kanpur, Kanpur-208016, India. nandini@iitk.ac.in.},  Anurag Singh\footnote{{Department of Mathematics and Statistics, IIT Kanpur, Kanpur-208016, India. anurags@iitk.ac.in.}}}

\date{\today}
\maketitle

\begin{abstract}

In this article, we prove that the neighborhood complex of the Kneser graph $KG_{3,k}$ is of the same homotopy type as that of a wedge of $\frac{(k+1)(k+3)(k+4)(k+6)}{4}+1$ spheres of dimension $k$. We construct a  maximal subgraph $S_{3,k}$ of $KG_{3,k}$, whose neighborhood complex deformation retracts onto the neighborhood complex of $SG_{3,k}$.
\end{abstract}

\noindent {\bf Keywords} : Hom complexes, Kneser Graphs, Discrete Morse theory.

\noindent 2000 {\it Mathematics Subject Classification} primary 05C15, secondary 57M15

\vspace{.1in}

\hrule


\section{{\bf Introduction}}
The Kneser Conjecture  (\cite{Lov78}), proved by Lov\'asz in 1978, states that the chromatic number of the Kneser graph $KG_{n,k}$ is $k+2$. Lov\'asz, in his proof, introduced a prodsimplicial complex corresponding to a graph $G$, called the neighborhood complex of $G$, denoted by $\N(G)$ and showed that $\N(KG_{n,k})$ is $(k-1)$-connected.  Schrijver \cite{Sch78} identified a vertex critical family of subgraphs of the  Kneser graphs called the stable Kneser graphs $SG_{n,k}$ and showed that the chromatic numbers of  both $SG_{n,k}$ and $KG_{n,k}$ are the same.
In 2003, Bj\"orner and de Longueville  (\cite{BL03}) proved that the neighborhood complex of $SG_{n,k}$ is homotopy equivalent to a $k$-sphere. 

In (\cite{Koz07}, Proposition 17.28), Kozlov proved that $\N(KG_{n,k})$ is homotopy equivalent to a wedge of spheres of dimension $k$.  In \cite{na}, we have shown that the homotopy type of  the neighborhood complex of the Kneser graph $KG_{2,k}$ is a wedge of $(k+4)(k+1)+1$ spheres of dimension $k$.

We consider a subgraph $S_{3,k}$ of $KG_{3,k}$, where the set of vertices of both the graphs is the same.  This graph is maximal in the sense that the addition of an extra edge to the graph $S_{3,k}$  changes the homotopy type of the corresponding neighborhood complex.  In this article, we prove:

 \begin{thm}\label{thm2} The neighborhood complex, $\N(S_{3,k})$, of $S_{3,k}$ collapses onto the neighborhood complex, $\N(SG_{3,k}),$ of the stable Kneser graph $SG_{3,k}$, for all $k \geq 0$.
  \end{thm}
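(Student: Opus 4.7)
The plan is to realise the claimed collapse as a discrete Morse matching on the face poset of $\N(S_{3,k})$ whose critical cells are precisely the faces of $\N(SG_{3,k})$; by Forman's theorem this yields a sequence of elementary collapses. Since $SG_{3,k}$ is a subgraph of $S_{3,k}$ on the subset of stable triples, every common neighbour in $SG_{3,k}$ remains a common neighbour in $S_{3,k}$, so $\N(SG_{3,k})$ sits inside $\N(S_{3,k})$ as a subcomplex and the task is to collapse away the remaining faces, namely those containing at least one non-stable triple or whose only common neighbours in $S_{3,k}$ are non-stable.

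I would first fix a linear order on the set $V(S_{3,k})\sm V(SG_{3,k})$ of non-stable triples, for instance by sorting them by the lexicographically smallest cyclically adjacent pair $\{i,i+1\}$ they contain, with ties broken lexicographically on the third element. For every face $\sigma$ of $\N(S_{3,k})$ not already lying in $\N(SG_{3,k})$, this order singles out a distinguished non-stable triple $m(\sigma)$ which appears either in $\sigma$ or in the set of common neighbours of $\sigma$ in $S_{3,k}$. The matching is then defined by the toggle $\sigma \longleftrightarrow \sigma \bigtriangleup \{m(\sigma)\}$, provided the symmetric difference is again a face.

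The bulk of the argument lies in two verifications. \emph{Closure}: one must check that the toggle always produces a face, i.e.\ that $m(\sigma)$ shares a common neighbour in $S_{3,k}$ with every vertex of $\sigma$. This is exactly where the extra edges defining $S_{3,k}$ above $SG_{3,k}$ are used, and a case analysis on the location of the cyclically adjacent pair inside $m(\sigma)$ will be required; the maximality of $S_{3,k}$ should manifest itself here, in that any further edge would destroy the required common neighbour. \emph{Acyclicity}: the modified Hasse diagram must contain no closed alternating cycle. This will follow from monotonicity of $m$, namely that along an alternating path the distinguished triple $m(\cdot)$ cannot increase in the fixed order, so stratifying the extra faces by the value of $m$ and analysing one stratum at a time isolates the interactions.

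The main obstacle I expect is the closure step: a single global pivot rule is unlikely to work uniformly, and the extra faces will have to be partitioned into several cases (for example according to whether the pair in $m(\sigma)$ is of the wrap-around form $\{k+6,1\}$, and according to how the third element of $m(\sigma)$ interacts with the other triples in $\sigma$), with a separate pivot rule on each case. Once this bookkeeping is consistent, identification of the critical cells with the faces of $\N(SG_{3,k})$, and hence the claimed collapse, follows directly from the Morse theoretic recipe.
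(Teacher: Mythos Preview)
Your proposal has a structural gap that cannot be repaired by refining the pivot rule. Consider a face $\sigma\in\N(S_{3,k})\setminus\N(SG_{3,k})$ consisting \emph{entirely of stable triples} but whose only common neighbours in $S_{3,k}$ are unstable (in the paper's notation these are the sets $A_k^{s,t}$ and $B_k^{s,u}$, where $C_\sigma=\{s,s+1,t\}$ or $\{s,s+1,u,u+1\}$). Your rule picks an unstable $m(\sigma)\in N(\sigma)$ and toggles to $\sigma\cup\{m(\sigma)\}$. But in $S_{3,k}$ an unstable vertex is adjacent only to stable vertices, so every common neighbour of $\sigma\cup\{m(\sigma)\}$ would have to be stable; yet $\sigma$ has no stable common neighbours by hypothesis. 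Hence $\sigma\cup\{m(\sigma)\}$ is never a face, and the closure step fails on the entire $A$/$B$ stratum, not just in sporadic cases. Dually, for $\sigma\in C_k$ containing exactly one unstable vertex $v$, your rule forces $m(\sigma)=v$ (since $N(\sigma)$ is all stable), and the toggle $\sigma\setminus\{v\}$ lands in $\N(SG_{3,k})$, which you want to keep critical.

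The paper's proof avoids both problems by toggling \emph{stable} vertices throughout. On $C_k$ the pivot is a stable triple $1\ell R(u,\ell)$ manufactured from the geometry of $N(\sigma)$; on the $A$ and $B$ strata the matching is built by a genuinely delicate induction on $k$: one first does an element matching with a fixed stable vertex (e.g.\ $46(r+6)$ on $A_r^{1,3}$), and the residual faces are shown to be in bijection with various $A_{k'}^{s',t'}$ for $k'<k$, to which the inductive hypothesis applies. This inductive decomposition into seven pieces (Claim~\ref{cla4.2} and its analogues) is the real work, and it is precisely what your outline is missing; no choice of unstable pivot can substitute for it.
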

  
  Since, $\N(SG_{3,k})$ is homotopic to $S^{k}$, the following is a consequence of Theorem \ref{thm2}.
  
  \begin{cor}\label{cor1.2}
The neighborhood complex, $\N(S_{3,k}),$ of $S_{3,k}$ is homotopic to the $k$-sphere $S^{k},$ for all $k \geq 0$.
  \end{cor}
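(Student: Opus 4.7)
The plan is to chain together Theorem \ref{thm2} with the Bj\"orner--de Longueville result already cited in the introduction. First, I would observe that the conclusion of Theorem \ref{thm2}---that $\N(S_{3,k})$ collapses onto $\N(SG_{3,k})$---is strictly stronger than homotopy equivalence: a (simplicial) collapse is a strong deformation retract, so in particular it induces a homotopy equivalence
\[
\N(S_{3,k}) \; \simeq \; \N(SG_{3,k}).
\]
This step is formal once we agree that the word ``collapses'' in Theorem \ref{thm2} is meant in the discrete Morse/elementary collapse sense, which is the standard interpretation and the only one consistent with the use of discrete Morse theory advertised in the keywords.

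Next, I would invoke the theorem of Bj\"orner and de Longueville \cite{BL03} recalled in the introduction, namely that $\N(SG_{n,k})$ is homotopy equivalent to $S^{k}$ for every admissible pair $(n,k)$. Specializing to $n=3$ gives $\N(SG_{3,k})\simeq S^{k}$, and composing the two equivalences yields $\N(S_{3,k})\simeq S^{k}$.

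There is essentially no obstacle in this corollary: all of the genuine combinatorial work---constructing an acyclic matching on the face poset of $\N(S_{3,k})$ whose critical cells are exactly the faces of $\N(SG_{3,k})$, or an analogous sequence of elementary collapses---is packaged inside Theorem \ref{thm2}. The one thing I would take care to state explicitly, for the reader's benefit, is the implication ``collapse $\Rightarrow$ homotopy equivalence,'' so that the two-line argument above is logically self-contained given the references.
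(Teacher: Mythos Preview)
Your proposal is correct and is exactly the paper's approach: the corollary is stated immediately after Theorem \ref{thm2} with the one-line justification that $\N(SG_{3,k})\simeq S^{k}$ by \cite{BL03}, so the collapse of Theorem \ref{thm2} gives $\N(S_{3,k})\simeq S^{k}$. There is nothing more to it.
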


The main result of this article is stated below:

 \begin{thm}\label{thm3}
 The neighborhood complex, $\N(KG_{3,k}),$ of the Kneser graph $KG_{3,k}$ is  homotopic to the wedge of $\frac{(k+1)(k+3)(k+4)(k+6)}{4}+1$ copies of the $k$-sphere $S^{k},$ for all $k \geq 0$.
  \end{thm}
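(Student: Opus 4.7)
The plan is to bootstrap from Corollary~\ref{cor1.2}, which gives $\N(S_{3,k})\simeq S^{k}$, and to treat $\N(KG_{3,k})$ as an enlargement of $\N(S_{3,k})$ by the simplices coming from edges of $KG_{3,k}$ that are missing from $S_{3,k}$. The overall approach is to apply discrete Morse theory to the face poset of $\N(KG_{3,k})$: I would construct an acyclic matching whose critical cells consist of a single vertex, a single $k$-cell inherited from the base sphere $\N(S_{3,k})$, and precisely $\frac{(k+1)(k+3)(k+4)(k+6)}{4}$ additional $k$-cells, each corresponding to a ``missing-edge obstruction''.

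First, I would extend the acyclic matching used in the proof of Theorem~\ref{thm2} to the whole of $\N(KG_{3,k})$. Maximal simplices of $\N(KG_{3,k})$ are indexed by pairs $(v,N(v))$, where $N(v)$ is the neighborhood of $v$ in $KG_{3,k}$; those that are genuinely new, relative to $\N(S_{3,k})$, are the ones in which $N(v)$ contains some $w$ with $vw\in E(KG_{3,k})\sm E(S_{3,k})$. I would pair most faces of these new simplices by an element-insertion/deletion rule on ordered $3$-subsets of $[2k+3]$, analogous to the one used for Theorem~\ref{thm2}, and reserve a small family of unmatched top-dimensional faces as the critical $k$-cells.

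The combinatorial heart of the proof is the enumeration of these critical $k$-cells. The factored form $\frac{(k+1)(k+3)(k+4)(k+6)}{4}$ strongly suggests that they are parametrized by a quadruple of integer coordinates, each ranging over an interval whose length is linear in $k$, with a modest symmetry accounting for the division by $4$. I would stratify the critical cells according to how the vertex $v$ and its neighborhood $N(v)$ fail the stability condition defining $SG_{3,k}$, and then carry out a case-by-case count, which I expect to reduce to a short list of configurations described by the positions of the ``instabilities''.

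The main obstacle will be establishing acyclicity of the matching and avoiding double-counting or omission at the interface between simplices already present in $\N(S_{3,k})$ and those newly introduced by edges in $E(KG_{3,k})\sm E(S_{3,k})$. Once this is done, the fundamental theorem of discrete Morse theory yields a CW model of $\N(KG_{3,k})$ with exactly one $0$-cell and $\frac{(k+1)(k+3)(k+4)(k+6)}{4}+1$ $k$-cells and no cells in intermediate dimensions; the attaching maps of the $k$-cells are therefore null-homotopic, producing the claimed wedge of $k$-spheres.
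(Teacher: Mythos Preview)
Your plan diverges from the paper's in a way that hides the main difficulty. The paper does \emph{not} construct an acyclic matching on $\N(KG_{3,k})$ whose critical cells lie only in dimensions $0$ and $k$. Instead it filters $X_1^k=\N(S_{3,k})\subset X_2^k\subset X_3^k=\N(KG_{3,k})$, where $X_2^k$ adds to $X_1^k$ the simplices $\sigma$ with $|C_\sigma|=4$. On $\P_3^k\setminus\P_2^k$ (simplices with $|C_\sigma|=3$) the paper's matching leaves $\frac{(k+1)(k+2)(k+3)(k+6)}{2}$ critical $k$-cells; on $\P_2^k\setminus\P_1^k$ it leaves $\frac{k(k+1)(k+3)(k+6)}{4}$ critical cells of dimension $k-1$, not $k$. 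The paper then invokes the a priori result of Kozlov/Longueville that $\N(KG_{3,k})\simeq\bigvee_t S^k$ for some $t$, and determines $t$ by feeding the two relative homology computations into the long exact sequence of the triple $(X_3^k,X_2^k,X_1^k)$ together with the splitting lemma.

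The gap in your proposal is precisely the $(k-1)$-dimensional critical cells. Simplices $\sigma\in\N(KG_{3,k})\setminus\N(S_{3,k})$ with $C_\sigma=\{i,i+1,j,j+1\}$ have dimension at most $k-1$, and the natural element-matchings you describe leave unmatched $(k-1)$-cells among them; you give no mechanism for cancelling these against $k$-cells, and doing so would require controlling gradient paths, not just counting. Without that, your Morse complex has cells in dimension $k-1$, the attaching maps of the $k$-cells are no longer automatically null-homotopic, and you cannot conclude the wedge decomposition from Morse theory alone. You would still need the external input that $\N(KG_{3,k})$ is a wedge of $k$-spheres, and then a homological computation of $t$---which is exactly the paper's route. (A minor slip: the vertices are $3$-subsets of $[k+6]$, not of $[2k+3]$.)
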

 
\section{{\bf Preliminaries}}

An ordered pair $G=(V,E)$, where $V$ is the set of vertices and $E  \subseteq \{ e \subseteq V  \ | \ \# (e) =2 \}$ is the set of edges of $G$ is called a  {\it simple graph.}  An edge $e$ is denoted by $(v,w)$. All graphs in this article are assumed to be simple and undirected {\it i.e.}, $(v,w)=(w,v)$. The vertices $v, w \in V$ are called adjacent, if $(v, w)\in E$.

A collection of finite sets, where $\tau \in X$ and $\sigma \subset \tau$ implies $\sigma \in X$ is called a {\it finite abstract simplicial complex}. If $\sigma \in X$ and $\#(\sigma) =k+1$, then $\sigma$ is called a $k$-simplex.
A {\it prodsimplicial complex} is a polyhedral complex, each of whose cells is a direct product of simplices (\cite{Koz07}).
The {\it face poset} of a simplicial complex $X$ , $\F(X)$, is a poset whose elements are the nonempty faces of $X$ ordered by inclusion.

 The {\it neighborhood  of a vertex $v$} in a graph $G$, is defined as $N(v)=\{ w \in V(G) \ |  \
(v,w) \in E(G)\}$.  For $A\subset V(G)$, the neighborhood of $A$, $N(A)= \{x \in  V(G) \ | \ (x,a) \in E(G),
\,\,\forall\,\, a \in A \}$.
The {\it neighborhood complex}, $\N(G)$ of $G$ is an abstract simplicial complex with vertices being the non isolated vertices of $G$ and simplices being all those subsets of $V(G)$ with a common neighbor. We now introduce an important tool in topological combinatorics.

In a simplicial complex $X$, if $\tau, \sigma \in X$, $\sigma \subsetneq \tau$ and  $\tau$ is the only maximal simplex in $X$ that contains $\sigma$, then $\sigma$ is called a {\it free face} of $\tau$ and $(\sigma, \tau)$ is called a {\it collapsible pair}. The simplicial complex $Y$ obtained from $X$ by removing all those simplices $\gamma$  of $X$ such that $\sigma \subseteq \gamma \subseteq \tau$ is  called a  {\it (simplicial) collapse} of $X$. If there exist a sequence of collapses from $X$ to $Z$, then $X$ is said to collapse onto $Z$ and is denoted by $X \searrow Z$.

Let $n \geq 1$, $k \geq 0$ and $[n]$ be the set   $ \{1,2, \ldots ,n \}$.

\begin{Def}
The Kneser graph $KG_{3,k}$ is the graph where $V(KG_{3,k}) =\{ \{i,j,k\} \ | \ i, j, k \in [k+6], \ i \neq j,k \ \text{and} \ j \neq k\}$ and $(v_1,v_2) \in E(KG_{3,k})$ if and only if $v_1 \cap v_2 =\emptyset$.

A vertex $\{i,j,k\}$ of $KG_{3,k}$ is said to be {\it stable} if $ \{t,t+1\} \nsubseteq \{i,j,k\}$ for $t \in [k+6]$, with addition modulo $k+6.$ If $\{i,j,k\}$ is not stable, then it is said to be {\it unstable}. Let $V_{u}^k$ is collection of all unstable vertices of $KG_{3,k}$. In this article, $V_{u}^k$ has the lexicographic ordering of triples.

The induced subgraph of $KG_{3,k}$, with all stable vertices is called the {\it stable Kneser graph} $SG_{3,k}$. These graphs are vertex critical,   {\it i.e.}, the chromatic number of any subgraph of $SG_{3,k}$ obtained by removing a vertex is strictly less than the chromatic number of $SG_{3,k}$ \cite{Sch78}.
\end{Def}

\begin{Def} The subgraph
$S_{3,k}$ of $KG_{3,k}$ is defined to be the graph with $V(S_{3,k})= V(KG_{3,k})$ and
$E(S_{3,k})= \{(u,v) \in E(KG_{3,k})\ | \ \text{at least one of}\ u \ \text{or} \ v\ \text{is stable} \}$.
\end{Def}

In this article, the vertex $\{i,j,k\}$ of $KG_{3,k}$ will be denoted by $ijk$.
For $\sigma =\{\alpha_1, \alpha_2, \ldots \alpha_{t}\} \in \mathcal{N}(KG_{3,k})$, let 
\begin{enumerate}
\item[$(a)$] $\sigma \oplus j = \{ \alpha_{1}  \oplus  j, \  \alpha_{2}  \oplus  j, \ \ldots , \ \alpha_{t}\oplus j\}$ and
\item[$(b)$] $\si \ominus j = \{ \alpha_{1}  \ominus  j, \ \alpha_{2}  \ominus  j,\  \ldots ,\  \alpha_{t} \ominus j \}$. 
\end{enumerate}
If $\alpha_{i} =i_1i_{2}i_3$, then $\alpha_{i} \oplus j =(i_1 +j)(i_{2}+j)(i_3+j)$ and $\alpha_{i} \ominus j = (i_1 -j)(i_{2}-j)(i_3-j)$
with addition  and subtraction being modulo $k+6$.
  For example, in $\mathcal{N}(KG_{3,3})$, $679 \oplus 1 = 781 $, $234 \oplus 1 = 345$, $679 \ominus 1 = 568 $ and $123 \ominus 1 = 912=129$ (see \cite{BZ14}).

If $\sigma = \{\alpha_1, \alpha_2,\ldots,\alpha_t\} \in \mathcal{N}(KG_{3,k})$, then define 
\begin{equation}\label{eq2.1}
S_{\sigma} = \overset{t}{\underset{i=1}{\bigcup}} \ \alpha_i \ \text{and} \ \ C_\sigma = [k+6] \setminus S_{\sigma}.
\end{equation}
It is a simple observation that $C_{\sigma \setminus \{abc\}} \subseteq C_\sigma \cup \ \{a,b,c\}$ and $C_{\sigma \ \cup \ \{abc\}} = C_\sigma \setminus \{a,b,c\} .$

The neighborhood complexes $\mathcal{N}(SG_{3,k})$, $\mathcal{N}(S_{3,k})$ and $\mathcal{N}(KG_{3,k})$ will denote the neighborhood complexes in $SG_{3,k}$, $S_{3,k}$ and $KG_{3,k}$, respectively. Further, addition and subtraction on vertices are $(\text{mod}\ k+6),$ {\it i.e.}, $340 \equiv 34(k+6) (\text{ mod } \ k+6)$ and $35(k+7) \equiv 351 (\text{ mod } \ k+6)$.

We observe the following:

\begin{prop}\label{prop2.1}
\begin{enumerate}
 \item[$(i)$] $\alpha \in V(KG_{3,k}) \setminus V(SG_{3,k}) \rar \exists \ j \in [k+5]\cup \{0\}, \ell \in [k+5]\sm \{1,2\}$, such that $\alpha = 12\ell \oplus j$, {\it i.e.} $ N(\alpha)= \{u \oplus j \ | \ u \in N( 12\ell)\} $.
 
\item[$(ii)$] $\alpha$, $ \beta \in V(KG_{3,k})$ and $\alpha = \beta \oplus j$, $  j \in [k+5]\cup \{0\}\rar N(\alpha)= \{u \oplus j \ | \ u \in N(\beta)\} $.
\end{enumerate}
\end{prop}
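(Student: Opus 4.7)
My plan is to exploit the cyclic symmetry of $KG_{3,k}$: for each $j \in [k+5]\cup\{0\}$, the map $\phi_j\colon v \mapsto v\oplus j$ on $V(KG_{3,k})$ is a graph automorphism. Part $(ii)$ is essentially the statement that $\phi_j$ is an automorphism, and part $(i)$ then follows by locating a distinguished representative of the form $12\ell$ in the $\phi_j$-orbit of every unstable vertex.

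For $(ii)$, the underlying shift $x \mapsto x+j \pmod{k+6}$ is a bijection on $[k+6]$ that preserves disjointness of triples: $v_1\cap v_2=\emptyset$ if and only if $(v_1\oplus j)\cap(v_2\oplus j)=\emptyset$. Hence $\phi_j$ is an automorphism of $KG_{3,k}$, and $\gamma\in N(\alpha)$ iff $\alpha\cap\gamma=\emptyset$ iff $\beta\cap(\gamma\ominus j)=\emptyset$ iff $\gamma\ominus j\in N(\beta)$, which gives $N(\alpha)=\{u\oplus j\mid u\in N(\beta)\}$.

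For $(i)$, let $\alpha$ be unstable. By definition $\alpha$ contains a cyclic pair $\{t,t+1\}$ for some $t\in[k+6]$; write $\alpha=\{t,t+1,m\}$ and set $j:=t-1\pmod{k+6}$, so $j\in[k+5]\cup\{0\}$. Then $\alpha\ominus j=\{1,2,\ell\}$ with $\ell\equiv m-t+1\pmod{k+6}$, and $(ii)$ applied to $\beta=12\ell$ finishes the neighborhood identification. The one step I expect to require the most care is ensuring $\ell\in[k+5]\setminus\{1,2\}$ rather than merely $\ell\in[k+6]\setminus\{1,2\}$: the value $\ell=k+6$ forces $m\equiv t-1\pmod{k+6}$, so $\alpha$ must be a block of three cyclically consecutive elements $\{s-1,s,s+1\}$. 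Such an $\alpha$ contains two cyclic pairs, and switching to the other one (replacing $t$ by $t-1$) yields $\ell=3\in[k+5]\setminus\{1,2\}$ for every $k\ge 0$. The remainder of the argument is routine bookkeeping with the cyclic shift.
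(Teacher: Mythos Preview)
Your proposal is correct and follows essentially the same approach as the paper: both arguments establish $(ii)$ via the fact that the cyclic shift is a graph automorphism, and $(i)$ by locating a cyclic pair $\{t,t+1\}\subset\alpha$ and shifting by $t-1$ to land on a representative $12\ell$. Your write-up is in fact more careful than the paper's, which hides the boundary case $\ell=k+6$ behind a ``without loss of generality''; your explicit treatment of the three-consecutive-element block (switching to the other cyclic pair to obtain $\ell=3$) closes this gap cleanly.
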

\begin{proof}
Firstly, $\alpha = \alpha_1\alpha_2\alpha_3\in V(KG_{3,k}) \setminus V(SG_{3,k})$ implies that there exists $i \in [k+6]$ such that $\{i,i+1\} \subsetneq \{\alpha_1,\alpha_2,\alpha_3\}$. Without loss of generality, assume that $\alpha_1=i$ and $\alpha_2=i+1$. Thus, $\alpha=12(\alpha_3-i+1)\oplus (i-1).$ This proves $(i)$.

Let $\alpha = \beta \oplus j$, $  j \in [k+5]\cup \{0\}$. Here, $u \in N(\beta)$ implies that $\beta \subseteq C_u$. So, $\beta\oplus j \subseteq C_{u \oplus j}$ implying that $u \oplus j \in N(\beta \oplus j) = N(\alpha).$ Further, if $v \in N(\alpha)$ then $v \ominus j \in N(\beta)$. This completes the proof of Proposition \ref{prop2.1}.
\end{proof}
Proposition \ref{prop2.1} holds even when  $KG_{3,k}$ is replaced by $S_{3,k}$.

We now discuss some basic results  that we have used from Discrete Morse theory.

\begin{Def}[\cite{BZ14}]
A partial matching in a poset $(P,\text{ \textless})$ is a subset $M \subseteq P \times P$, where
\begin{enumerate}
\item[(i)] $(a,b) \in M$ implies $ b \succ a;$ {\it i.e.}, $a \ \text{\textless} \ b$ and $ \nexists $ $c \in P$ such that $ a \ \text{\textless}  \ c \ \text{\textless} \ b $ and

\item[(ii)] each $ a \in P $ belongs to at most one element in $M$.
\end{enumerate}

If $(a,b) \in M $, we denote $ a $ by $ d(b) $ and $ b$ by $ u(a) $. A partial matching on $P$ is called acyclic if there do not exist distinct $a_{i}\in P$, $1\leq i \leq  m$, $ m \geq 2 $ such that 
\begin{equation} \label{eq3.1}
 a_1 \ \prec \ u(a_1) \ \succ \ a_2 \ \prec \ u(a_2) \ \succ \ \ldots \ \prec \ u(a_m) \ \succ \ a_1 \  \ {\text is\ \ a\ \ cycle}.
\end{equation}
\end{Def}

For an acyclic matching $M$ on $P$, the unmatched elements of $P$ are called {\it critical}. If each element of $P$ belongs to a member of $M$, then $M$ is called a {\it perfect} matching on $P$. The following results are used to construct acyclic partial matchings.

Let $\Delta \subseteq 2^X$ and $x \in X$. Define 
\begin{equation}\label{eq3.2}
\begin{aligned}
& M(\Delta)_x =\{(\sigma \setminus \{x\} , \ \sigma \cup \ \{x\}) \ | \  \sigma \setminus\{ x\}, \ \sigma \cup \ \{x\} \in \Delta\} ~~and
\\
& (\Delta)_x= \{\sigma \in \Delta \ | \ \sigma \setminus \{x\},\ \sigma \cup  \ \{x\} \in \Delta\}.
\end{aligned}
\end{equation}

\begin{lem}[\cite{na}]\label{pac}
The set $M(\Delta)_x$ is a perfect acyclic matching on $(\Delta)_x$.
\end{lem}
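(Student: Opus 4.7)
The lemma makes three claims: $M(\Delta)_x$ is a partial matching on the sub-poset $(\Delta)_x$, the matching is perfect, and it is acyclic. The plan is to dispose of the first two as essentially formal bookkeeping and to concentrate the argumentative work on acyclicity.

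For the partial-matching and perfectness parts, I would observe that any pair $(\sigma \setminus \{x\}, \sigma \cup \{x\}) \in M(\Delta)_x$ consists of two sets differing by exactly the single element $x$, so $\sigma \cup \{x\}$ automatically covers $\sigma \setminus \{x\}$ in the Boolean lattice and therefore also in the sub-poset $(\Delta)_x$. Next, given any $\tau \in (\Delta)_x$, both $\tau \setminus \{x\}$ and $\tau \cup \{x\}$ lie in $\Delta$ by the very definition of $(\Delta)_x$; hence $\tau$ is paired with $\tau \cup \{x\}$ when $x \notin \tau$ and with $\tau \setminus \{x\}$ when $x \in \tau$. These are manifestly the only pairs in $M(\Delta)_x$ containing $\tau$, so each element of $(\Delta)_x$ sits in exactly one pair, giving both parts (i), (ii) of the partial-matching definition and perfectness in one stroke.

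For acyclicity I would proceed by contradiction: suppose a cycle $a_1 \prec u(a_1) \succ a_2 \prec u(a_2) \succ \cdots \prec u(a_m) \succ a_1$ exists with $m \geq 2$ and distinct $a_i \in (\Delta)_x$. Each $a_i$ occupies the lower slot of its matching pair, so $x \notin a_i$ and $u(a_i) = a_i \cup \{x\}$ for every $i$. The heart of the argument is the analysis of a single down-arrow $u(a_i) \succ a_{i+1}$: we have $a_{i+1} \subsetneq a_i \cup \{x\}$ and $a_{i+1} \neq a_i$ by distinctness, so if one had $x \notin a_{i+1}$ then $a_{i+1} \subsetneq a_i$ strictly, and $a_i$ itself would be an element of $(\Delta)_x$ sitting strictly between $a_{i+1}$ and $u(a_i)$, violating the covering relation in $(\Delta)_x$. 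Consequently $x \in a_{i+1}$, contradicting $x \notin a_{i+1}$, and no cycle can exist.

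The only step that I expect to require genuine care is the last one: the covering relation $u(a_i) \succ a_{i+1}$ must be read inside the sub-poset $(\Delta)_x$, and it is precisely the membership $a_i \in (\Delta)_x$ that eliminates the alternative $x \notin a_{i+1}$. Once this observation is in place, all three conclusions of the lemma follow at once.
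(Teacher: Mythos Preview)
Your argument is correct and is the standard proof of this well-known fact about element matchings. Note that the paper does not actually supply a proof of this lemma; it simply cites it from \cite{na}, so there is no in-paper argument to compare against.

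One small remark on presentation: in your acyclicity step you phrase the deduction as ``if $x \notin a_{i+1}$ then \ldots, consequently $x \in a_{i+1}$, contradicting $x \notin a_{i+1}$''. This is logically fine but slightly roundabout, since you have already established $x \notin a_{i+1}$ from $a_{i+1}$ being a lower element of its pair. It would read more cleanly to go directly: from $x \notin a_{i+1}$ and $a_{i+1} \subsetneq a_i \cup \{x\}$ one gets $a_{i+1} \subseteq a_i$; distinctness forces $a_{i+1} \subsetneq a_i$; and then $a_i \in (\Delta)_x$ sits strictly between $a_{i+1}$ and $u(a_i)$, contradicting the cover relation. The content is identical to what you wrote, just without the nested contrapositive. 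Your observation that the cover relation must be read inside $(\Delta)_x$, and that membership of $a_i$ in $(\Delta)_x$ is exactly what blocks the intermediate element, is the key point and you have it right.
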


\begin{lem}[\cite{Jon08}]\label{lem33}
Let $M'$ be an acyclic matching on $\Delta'  = \Delta \setminus (\Delta)_x$. Then, $M = M' \cup \ M(\Delta)_x$ is an acyclic matching on $\Delta$.
\end{lem}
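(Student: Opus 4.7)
The plan is to split the proof into the two standard requirements for an acyclic matching: (a) $M$ is a partial matching, and (b) $M$ is acyclic. Part (a) is essentially bookkeeping. By Lemma~\ref{pac}, $M(\Delta)_x$ matches only elements of $(\Delta)_x$, while by hypothesis $M'$ matches only elements of $\Delta' = \Delta \setminus (\Delta)_x$; since $(\Delta)_x$ and $\Delta'$ partition $\Delta$, these two supports are disjoint, and so the union $M$ inherits the property that each element appears in at most one pair.

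For (b), I would argue by contradiction: suppose
\begin{equation*}
a_1 \prec u(a_1) \succ a_2 \prec u(a_2) \succ \cdots \prec u(a_m) \succ a_1
\end{equation*}
is a cycle in $M$ with distinct $a_i$'s. If every edge $(a_i, u(a_i))$ lies in $M'$, then all elements of the cycle lie in $\Delta'$, and since a covering relation in $\Delta$ between two elements of $\Delta'$ remains a covering relation in $\Delta'$, the same sequence is a cycle in $M'$ over $\Delta'$, contradicting its acyclicity. Hence some edge $(a_i, u(a_i))$ lies in $M(\Delta)_x$, giving $x \notin a_i$ and $u(a_i) = a_i \cup \{x\}$.

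The decisive step is to track $x$ one notch further along the cycle. The element $a_{i+1}$ is covered by $u(a_i)$ in the face poset and is distinct from $a_i$, so in the simplicial complex $\Delta$ it has the form $a_{i+1} = u(a_i) \setminus \{y\}$ for some $y \in a_i$ (choosing $y = x$ would recover $a_i$). In particular $x \in a_{i+1}$. The pair $(a_{i+1}, u(a_{i+1}))$ therefore cannot belong to $M(\Delta)_x$, whose lower elements never contain $x$, so it lies in $M'$ and hence $a_{i+1} \in \Delta'$. Since $x \in a_{i+1}$ already gives $a_{i+1} \cup \{x\} = a_{i+1} \in \Delta$, the condition $a_{i+1} \notin (\Delta)_x$ forces $a_{i+1} \setminus \{x\} \notin \Delta$. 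But $a_{i+1} \setminus \{x\}$ is a proper subset of the face $a_{i+1} \in \Delta$, and in a simplicial complex this subset must also lie in $\Delta$---a contradiction. The main obstacle, if one must be singled out, is purely one of bookkeeping: seeing that an $M(\Delta)_x$-edge forces $x$ into the very next $a_{i+1}$, while the definition of $\Delta'$ is precisely what forbids $x$ from appearing there in any manner compatible with simplicial downward closure.
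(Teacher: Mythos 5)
The paper itself does not prove this lemma (it is quoted from Jonsson), so your argument stands on its own; unfortunately it contains a genuine gap at the very last step. Everything up to and including ``$a_{i+1}\setminus\{x\}\notin\Delta$'' is fine, but the concluding sentence --- that this contradicts downward closure of $\Delta$ --- is wrong, because $\Delta$ in Equation \eqref{eq3.2} is an \emph{arbitrary} subset of $2^X$, not a simplicial complex. In every application in this paper ($A_r^{1,\ell}$, $C_{k,n}^{12\ell}$, $\Sigma^{v_s,j}$, \dots) the set $\Delta$ is a thin slice of a face poset (e.g.\ all $\sigma$ with $C_\sigma=\{1,2,3\}$) and is emphatically not closed under taking subsets. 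Indeed, if your contradiction were valid it would show that every $\sigma\in\Delta$ containing $x$ has $\sigma\setminus\{x\}\in\Delta$ and hence lies in $(\Delta)_x$, i.e.\ that $\Delta'$ contains no set containing $x$ --- which is false already for $\Delta=\{\{x\}\}$. So at the point where you stop, no contradiction has yet been reached: $a_{i+1}$ is simply a legitimate element of $\Delta'$ containing $x$.

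The missing idea is to \emph{propagate} $x$ all the way around the cycle rather than to stop after one step. Having shown $x\in a_{i+1}$, argue inductively: if $x\in a_j$, then $(a_j,u(a_j))$ cannot lie in $M(\Delta)_x$ (its lower elements omit $x$), so it lies in $M'$ and in particular $u(a_j)\in\Delta'$, i.e.\ $u(a_j)\notin(\Delta)_x$. Now $a_{j+1}=u(a_j)\setminus\{z\}$ for some $z$; if $z=x$, then $u(a_j)\setminus\{x\}=a_{j+1}\in\Delta$ and $u(a_j)\cup\{x\}=u(a_j)\in\Delta$, forcing $u(a_j)\in(\Delta)_x$, a contradiction. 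Hence $z\neq x$ and $x\in a_{j+1}$. Going once around the cycle returns to $a_i$, which does not contain $x$ --- that is the contradiction. (Note that the decisive use of $\Delta'=\Delta\setminus(\Delta)_x$ is at the \emph{upper} elements $u(a_j)$, not at $a_{i+1}$ as in your draft.) Two smaller points: your justification that $a_{i+1}=u(a_i)\setminus\{y\}$ again invokes ``the simplicial complex $\Delta$''; either work with covering relations in the Boolean lattice (which is the setting in which all the matchings of this paper are built), or note that if $x\notin a_{i+1}\subsetneq a_i\cup\{x\}$ then $a_{i+1}\subsetneq a_i\subsetneq u(a_i)$ with $a_i\in\Delta$, contradicting $u(a_i)\succ a_{i+1}$. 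The first half of your argument (disjointness of the supports, and the reduction of an all-$M'$ cycle to a cycle in $\Delta'$) is correct.
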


\begin{lem}[Cluster Lemma \cite{BB11}]\label{lem34}
If $ \varphi \ : \ P \rightarrow Q$ is an order-preserving map and for each $q \in Q$, the subposet $\varphi^{-1}(q)$ carries an acyclic matching $M_q$, then ${\underset{q \ \in \ Q }{\bigsqcup}} M_q $ is an acyclic matching on P.
\end{lem}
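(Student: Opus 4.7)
The plan is to exhibit an acyclic matching on $\F(\N(KG_{3,k}))$ whose critical cells consist of exactly one $0$-simplex and $\frac{(k+1)(k+3)(k+4)(k+6)}{4}+1$ simplices in dimension $k$, with no critical cells in any intermediate dimension. By Forman's discrete Morse theory, $\N(KG_{3,k})$ is then homotopy equivalent to a CW complex with one vertex and the stated number of $k$-cells; since the attaching maps of cells in disjoint dimensions are null-homotopic, the resulting complex is the claimed wedge of $k$-spheres.

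Stage 1 handles $\F(\N(S_{3,k}))$. The simplicial collapse $\N(S_{3,k}) \searrow \N(SG_{3,k})$ from Theorem \ref{thm2} provides a perfect acyclic matching on $\F(\N(S_{3,k})) \setminus \F(\N(SG_{3,k}))$; combine it with any acyclic matching on $\F(\N(SG_{3,k}))$ whose only critical cells are one vertex and one $k$-simplex (such a matching exists because $\N(SG_{3,k}) \simeq S^k$ by \cite{BL03}). This yields an acyclic matching $M_1$ on $\F(\N(S_{3,k}))$ with exactly two critical cells, one each in dimensions $0$ and $k$.

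Stage 2 handles $\Delta := \F(\N(KG_{3,k})) \setminus \F(\N(S_{3,k}))$. A simplex $\sigma$ lies in $\Delta$ iff $\sigma$ contains an unstable vertex and every common $KG_{3,k}$-neighbor of $\sigma$ is unstable, equivalently iff every $3$-subset of $C_\sigma$ contains a pair of consecutive elements of $[k+6]$. This condition severely restricts $C_\sigma$: it must not admit three pairwise non-consecutive elements, so $C_\sigma$ sits inside a small union of arcs of consecutive indices in the cyclic $[k+6]$. Using Proposition \ref{prop2.1} to factor out the $\mathbb{Z}_{k+6}$-symmetry, I would introduce an order-preserving map $\varphi : \Delta \to Q$ that records the ``type'' of $C_\sigma$ together with a canonical unstable vertex of $\sigma$ (for instance, the lex-smallest $12\ell$-rotate it contains). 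Within each fiber $\varphi^{-1}(q)$, apply Lemma \ref{pac} with a carefully chosen vertex $x$ of $KG_{3,k}$, and iterate via Lemma \ref{lem33} until every remaining cell in the fiber lies in dimension $k$. The Cluster Lemma (Lemma \ref{lem34}) then assembles these fiberwise matchings into an acyclic matching $M_2$ on $\Delta$ whose critical cells are all $k$-dimensional. Combining $M_1$ and $M_2$ (via Lemma \ref{lem33} applied with $\Delta' = \F(\N(S_{3,k}))$) gives the global acyclic matching.

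The main obstacle is the explicit construction of $M_2$ together with the verification that its critical $k$-cells number exactly $\frac{(k+1)(k+3)(k+4)(k+6)}{4}$. The closed form of the count suggests a clean orbit-plus-type decomposition once rotational symmetry is factored out, but unavoidable case analysis is required: different shapes of $C_\sigma$ (a single short arc, two arcs, etc.) call for different choices of the matching element $x$, and at the end a polynomial identity in $k$ must be checked. Acyclicity itself should follow routinely from the Cluster Lemma combined with the intrinsic acyclicity of each Lemma \ref{pac}-style matching, provided $\varphi$ is arranged so that every matched pair $(\sigma \setminus \{x\},\, \sigma \cup \{x\})$ lives in a single fiber.
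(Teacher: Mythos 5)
Your proposal does not engage with the statement it is supposed to prove. The statement is the Cluster Lemma, a general poset-theoretic fact: if $\varphi : P \rightarrow Q$ is order-preserving and every fiber $\varphi^{-1}(q)$ carries an acyclic matching $M_q$, then $\bigsqcup_{q\in Q} M_q$ is an acyclic matching on $P$. What you have written is instead an outline of a discrete-Morse-theoretic attack on Theorem \ref{thm3}, the computation of the homotopy type of $\N(KG_{3,k})$; the Cluster Lemma appears in your text only as a tool being invoked, never as the object being proved. Nowhere do you verify either that the union of the $M_q$ is a partial matching or that it admits no cycle of the form in Equation \eqref{eq3.1}. This is a complete mismatch of target, not a fixable gap in an otherwise correct argument.

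For the record, the argument you should have given is short. Each pair of $M_q$ consists of two elements of $\varphi^{-1}(q)$, and the fibers are pairwise disjoint, so no element of $P$ belongs to two pairs of the union; thus $\bigsqcup_{q\in Q} M_q$ is a partial matching. For acyclicity, suppose $a_1 \prec u(a_1) \succ a_2 \prec u(a_2) \succ \cdots \prec u(a_m) \succ a_1$ were a cycle, and set $q_i = \varphi(a_i)$. Since $(a_i,u(a_i)) \in M_{q_i}$, both $a_i$ and $u(a_i)$ lie in $\varphi^{-1}(q_i)$, so $\varphi(u(a_i)) = q_i$; and from $a_{i+1} < u(a_i)$ together with order-preservation of $\varphi$ we get $q_{i+1} \leq q_i$ (indices modulo $m$). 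Going once around the cycle forces $q_1 = q_2 = \cdots = q_m$, so the entire cycle lies in the single fiber $\varphi^{-1}(q_1)$ and is a cycle of the matching $M_{q_1}$ there, contradicting its acyclicity. (The paper itself only cites \cite{BB11} for this lemma rather than proving it, but the task was to supply a proof of the stated lemma, not a proof strategy for Theorem \ref{thm3}.)
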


The following result by Forman \cite{For98}, one of the main results of Discrete Morse Theory used in this article, gives a Morse Complex  comprising the critical cells corresponding to the acyclic matching defined on a face poset.

\begin{thm}[Forman \cite{For98}]\label{lem35}
Let $ \Delta $ be a simplicial complex and M be an acyclic matching on the face poset of $\Delta $. Let $c_i$ denote the number of critical $i$-dimensional cells of $\Delta$.
The space $\Delta $ is homotopy equivalent to a cell complex $ \Delta _c$ with $c_i$ cells of dimension $i$ for each $i \geq 0$, plus a single $0$-dimensional cell in the case where the empty set is also paired in the matching.
\end{thm}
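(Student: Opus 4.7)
The plan is to prove Forman's theorem by using the acyclic matching $M$ to produce a filtration of $\Delta$ whose successive inclusions alternate between cell attachments at critical simplices and simple homotopy equivalences at matched pairs. The acyclicity of $M$ appears as the guarantor that such a filtration exists.

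I would form the \emph{modified Hasse diagram} $\hat{H}$ of $\F(\Delta)$, namely the Hasse diagram of $\F(\Delta)$ with each covering edge $\sigma \lessdot \tau$ corresponding to a pair $(\sigma,\tau)\in M$ reversed. Acyclicity of $M$ is precisely equivalent to $\hat{H}$ being a directed acyclic graph. Any topological sort of $\hat{H}$ therefore places each matched upper element $\tau$ before its partner $\sigma$, while faces of $\tau$ other than $\sigma$ still precede $\tau$. A short refinement argument—based on the fact that no simplex can lie strictly between $\tau$ and $\sigma$ in $\hat{H}$ when $\sigma \lessdot \tau$—lets one arrange each matched pair to occupy two consecutive positions. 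Build the filtration $\emptyset = \Delta^{(0)} \subseteq \Delta^{(1)} \subseteq \cdots \subseteq \Delta^{(N)} = \Delta$ by adjoining simplices in this order, grouping each matched pair into a single step.

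At a step adjoining a critical simplex $\sigma$ of dimension $d$, the inclusion $\Delta^{(i-1)}\hookrightarrow \Delta^{(i)}$ is the attachment of a $d$-cell along the original boundary of $\sigma$. At a step jointly adjoining a matched pair $(\sigma,\tau)$, the chosen order guarantees that every cofacet of $\sigma$ other than $\tau$ has not yet been adjoined, so $\sigma$ is a genuine free face of $\tau$ inside $\Delta^{(i)}$, and the inclusion $\Delta^{(i-1)}\hookrightarrow \Delta^{(i)}$ is the inverse of an elementary collapse, hence a simple homotopy equivalence. Concatenating these two kinds of moves exhibits $\Delta$ as built from $c_{i}$ cell attachments in each dimension $i$ interspersed with homotopy equivalences; pushing the attaching maps of the critical cells through the intervening equivalences yields the CW complex $\Delta_{c}$ with exactly $c_{i}$ cells of dimension $i$ and $\Delta \simeq \Delta_{c}$. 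The clause about the empty set handles the case $(\emptyset,v)\in M$, in which the collapse corresponding to this pair removes the basepoint $(-1)$-cell and a single $0$-cell must be reinserted as a formal basepoint.

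The principal obstacle is the refinement of the topological sort. While any DAG admits a topological sort, ensuring simultaneously that matched pairs are placed in consecutive positions \emph{and} that at the joint adjunction step all other cofacets of $\sigma$ remain outside the current subcomplex demands careful combinatorial bookkeeping. Elementary cautionary examples—such as matching a single vertex of $\partial\Delta^{2}$ with one of its two containing edges—show that freeness of $\sigma$ cannot be expected globally in $\Delta$; the entire argument rests on carefully scheduling \emph{when} each simplex enters the filtration, and it is at this stage that the acyclicity hypothesis on $M$ is indispensable.
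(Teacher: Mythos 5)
This theorem is quoted from Forman's paper and is not proved in the article under review, so there is no internal proof to compare against; I am judging your sketch on its own terms. Your overall strategy is the standard modern one (essentially how Kozlov and Jonsson present the result): schedule the simplices so that critical cells enter the filtration as honest cell attachments, matched pairs enter as elementary expansions, and then transport the attaching maps of the critical cells through the intervening homotopy equivalences. The closing remarks about pushing attaching maps through equivalences of CW pairs and about the pair $(\emptyset,v)$ forcing one extra $0$-cell are also correct.

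The gap is that the entire content of the theorem is concentrated in the scheduling step, and that step is asserted rather than proved. Two concrete problems. First, a topological sort of the modified Hasse diagram $\hat H$ is in general \emph{not} a linear extension of $\F(\Delta)$: once some covering edges are reversed, a non-facet face $\rho<\tau$ need not be forced before $\tau$. For example, suppose $\rho\lessdot\sigma_1\lessdot\tau$ and $\rho\lessdot\sigma_2\lessdot\tau$ are the two saturated chains from a codimension-two face $\rho$ of $\tau$, with $(\sigma_1,\tau)\in M$ and $(\rho,\sigma_2)\in M$. With your orientation the edges are $\rho\to\sigma_1$, $\tau\to\sigma_1$, $\sigma_2\to\rho$, $\sigma_2\to\tau$; this is acyclic, yet there is no directed path between $\rho$ and $\tau$, so a topological sort may place $\tau$ before its face $\rho$, and the partial unions $\Delta^{(i)}$ then fail to be simplicial complexes. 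Second, even after securing a genuine linear extension of the face poset, arranging each matched pair in consecutive positions is precisely the nontrivial equivalence between acyclic matchings and linear extensions with matched pairs consecutive (Kozlov, \emph{Combinatorial Algebraic Topology}, Theorem 11.2); its proof is an induction that uses acyclicity to extract, at each stage, a critical cell or a matched pair that is extremal for the reachability order of $\hat H$ and can be placed at the end of the ordering. You correctly identify this region as the principal obstacle, but the ``short refinement argument'' you invoke does not exist as stated, and without that lemma the proof is incomplete.
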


This result gives the following corollaries.

\begin{cor}[\cite{BB11}]\label{rem36}
If an acyclic matching has critical cells only in a fixed dimension $i$, then $\Delta$ is homotopy equivalent to a wedge of $i$-dimensional spheres.
\end{cor}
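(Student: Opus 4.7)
The plan is to deduce the corollary directly from Forman's theorem (Theorem \ref{lem35}). Applying that theorem to the acyclic matching $M$ yields a CW complex $\Delta_c \simeq \Delta$ whose cells are precisely the critical cells of $M$, together with an extra basepoint $0$-cell in the case that the empty set is itself paired.

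Under the hypothesis of the corollary, every critical cell lies in dimension $i$, so $\Delta_c$ has $c_i$ cells in dimension $i$ and no other positive-dimensional cells. For $i \geq 1$, the matching must in addition pair the empty set --- otherwise $\Delta_c$ would contain $i$-cells with no $0$-skeleton for them to attach to --- and this contributes exactly one basepoint $0$-cell. Consequently, the $(i-1)$-skeleton of $\Delta_c$ is just this single point.

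Hence each $i$-cell of $\Delta_c$ is attached via a map $S^{i-1} \to \{*\}$, which is automatically constant when $i \geq 2$ (so that $S^{i-1}$ is connected) or sends the two endpoints of each $1$-cell to the basepoint when $i = 1$. In both cases every critical $i$-cell contributes precisely one $i$-sphere wedged at the basepoint, so $\Delta_c$ is homeomorphic to $\bigvee^{c_i} S^i$, as required. I foresee no serious obstacle here; the argument is essentially a routine unpacking of the CW structure produced by Discrete Morse theory, and the only mildly delicate point --- forcing the empty set to be paired --- is dictated by the requirement that the Morse complex be a nonempty CW complex.
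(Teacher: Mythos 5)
Your argument is correct and is exactly the intended derivation: the paper states this as an immediate corollary of Forman's theorem (Theorem \ref{lem35}), citing \cite{BB11} without further proof, and your unpacking --- all cells of $\Delta_c$ concentrated in dimension $i$ plus the single basepoint $0$-cell, forcing constant attaching maps and hence a wedge $\bigvee^{c_i} S^i$ --- is the standard way to see it. No gaps worth flagging.
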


\begin{cor}[\cite{BZ14}]\label{rem37}
If the critical cells of an acyclic matching on $\Delta$ form a subcomplex $\Delta'$ of $\Delta$, then $\Delta $ simplicially collapses to $\Delta'$, implying that $\Delta'$ is a deformation retract of $\Delta$.
\end{cor}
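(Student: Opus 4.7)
The plan is to convert the acyclic matching $M$ into an explicit sequence of elementary simplicial collapses from $\Delta$ down to $\Delta'$. Acyclicity will dictate the order of the collapses, and the subcomplex hypothesis on the critical cells will ensure each step is a genuine elementary collapse in the sense of the definition preceding Theorem~\ref{thm2}.

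First, I would encode $M$ as a modification of the Hasse diagram of $\F(\Delta)$: for every covering relation $\sigma \subsetneq \tau$ with $\dim\tau = \dim\sigma + 1$, orient the edge as $\tau \to \sigma$, except reverse it to $\sigma \to \tau$ for every pair $(\sigma,\tau) \in M$. Acyclicity of $M$, as stated in~(\ref{eq3.1}), is precisely the assertion that the resulting directed graph $H$ contains no directed cycle. Thus $H$ induces a strict partial order $\prec_M$ on $\F(\Delta)$. I would then fix a linear extension of $\prec_M$ and list the matched pairs $(\sigma_1,\tau_1), (\sigma_2,\tau_2), \ldots$ so that the $\tau_i$'s appear in the reverse of this order (the matched pair with the $\prec_M$-largest upper element is processed first).

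Next, I would prove by induction on $i$ that removing $(\sigma_i,\tau_i)$ at stage $i$ constitutes an elementary simplicial collapse, i.e., $\sigma_i$ is a free face of $\tau_i$ in the complex $\Delta_i$ that remains after stages $1,\ldots,i-1$. Suppose toward contradiction that some $\tau' \neq \tau_i$ with $\tau' \supsetneq \sigma_i$ survives in $\Delta_i$. There are two cases. If $\tau'$ is critical, then $\tau' \in \Delta'$, and since $\Delta'$ is assumed to be a subcomplex, every face of $\tau'$ lies in $\Delta'$; but $\sigma_i$ is matched and therefore non-critical, a contradiction. If instead $\tau'$ is matched but has not yet been removed, then $H$ contains a directed chain from $\tau'$ down to $\sigma_i$ through non-matched covering edges, followed by the matched arrow $\sigma_i \to \tau_i$. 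This forces $\tau_i \prec_M \tau'$, contradicting the choice that we process the $\prec_M$-largest $\tau$ first.

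Once every matched pair has been removed in this manner, the simplices left in $\Delta$ are exactly the critical cells, which by hypothesis form $\Delta'$. Hence $\Delta \searrow \Delta'$, and since a simplicial collapse induces a strong deformation retraction, $\Delta'$ is a deformation retract of $\Delta$. The main obstacle is the verification in the matched-competitor case: chaining a non-free-face witness $\tau'$ into a $\prec_M$-violation requires combining the global acyclicity of $M$ with the purely local subcomplex hypothesis on critical cells, and one must check that the covering chain from $\tau'$ down to $\sigma_i$ genuinely exists inside $\Delta_i$, i.e., that no intermediate simplex on that chain has been prematurely removed by an earlier collapse. Here the linear extension chosen for $\prec_M$ is exactly what is needed to rule this out.
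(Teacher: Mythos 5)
The paper itself does not prove this corollary; it is quoted from \cite{BZ14} and is essentially Theorem 11.13(a) of \cite{Koz07}, whose standard proof is exactly the strategy you outline: linearly extend the reachability order $\prec_M$ of the modified Hasse diagram and peel off matched pairs as elementary collapses. So your approach is the right one, but there is a genuine gap in the key step. In your second case you derive $\tau_i \prec_M \tau'$ and claim this contradicts the processing order, but your order only constrains the \emph{upper} elements of matched pairs. The surviving competitor $\tau'$ may be the \emph{lower} element of its pair, matched upward to some $\rho \supsetneq \tau'$. Then the pair containing $\tau'$ is scheduled according to the position of $\rho$, and the relations you actually have, namely $\tau_i \prec_M \tau'$ and $\rho \prec_M \tau'$ (the latter from the reversed matched edge $\tau' \to \rho$), leave $\tau_i$ and $\rho$ unordered; nothing forces $(\tau',\rho)$ to be processed before $(\sigma_i,\tau_i)$, so the contradiction evaporates and the induction does not close as written.

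The fix requires one more directed path. Reduce first to the case where $\tau'$ covers $\sigma_i$ (legitimate because $\Delta_i$ is a simplicial complex by the inductive hypothesis, so any surviving proper coface of $\sigma_i$ other than $\tau_i$ yields a surviving covering coface other than $\tau_i$; this reduction also delivers the maximality of $\tau_i$ in $\Delta_i$, which your definition of collapsible pair needs). Write $\tau' = \sigma_i \cup \{y\}$ and $\rho = \tau' \cup \{x\}$. The cell $\sigma_i \cup \{x\}$ is a face of $\rho$ distinct from $\tau'$, the edge $\rho \succ \sigma_i \cup \{x\}$ is unmatched (since $\rho$ is matched to $\tau'$), and the edge $\sigma_i \cup \{x\} \succ \sigma_i$ is either the matched edge $\sigma_i \to \tau_i$ (if $\sigma_i\cup\{x\}=\tau_i$) or unmatched and pointing down. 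Either way $H$ contains the directed path $\rho \to \sigma_i\cup\{x\} \to \sigma_i \to \tau_i$, so $\tau_i \prec_M \rho$, the pair $(\tau',\rho)$ is processed before $(\sigma_i,\tau_i)$, and $\tau'$ cannot survive into $\Delta_i$. A related imprecision occurs in your chain argument for a general coface $\tau'$: a descending chain of covering relations from $\tau'$ to $\sigma_i$ need not be directed in $H$, since an intermediate covering edge may itself be matched and hence point upward; restricting to covering cofaces as above avoids this entirely.
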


Henceforth, $H_n(X)$ will represent the reduced $n^{th}$ homology group of $X$ with integer coefficients. If $X$ is a cell complex and A is a nonempty subcomplex, we say that $(X,A)$ is a good pair. We now present some results for a good pair $(X,A)$ from \cite{hatcher}.

\begin{prop}[Proposition $2.22$, page $124$, \cite{hatcher}]\label{prop3.1}
The quotient map $q: (X,A) \longrightarrow (X/A,A/A)$ induces isomorphisms $q_{*}:H_n(X,A)\longrightarrow H_n(X/A,A/A) \cong H_n(X/A).$
\end{prop}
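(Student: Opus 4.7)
The plan is to exploit excision together with the defining property of a good pair: $A$ admits an open neighborhood $V$ in $X$ that deformation retracts onto $A$ via a homotopy fixing $A$ pointwise. Because the deformation is constant on $A$, it descends under $q$ to a deformation retraction of $V/A$ onto the single point $A/A$, so $(X/A, A/A)$ is likewise a good pair. This descent is the only place where the good pair hypothesis is used in an essential way.

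With $V$ fixed, I would assemble the commutative diagram with rows
\[
H_n(X,A)\longrightarrow H_n(X,V) \xleftarrow{\ \cong\ } H_n(X\setminus A,\, V\setminus A)
\]
and
\[
H_n(X/A,A/A)\longrightarrow H_n(X/A,V/A)\xleftarrow{\ \cong\ } H_n\!\left((X/A)\setminus(A/A),\,(V/A)\setminus(A/A)\right),
\]
whose vertical arrows are induced by $q$. The right-hand horizontal arrows are excision isomorphisms (excising $A$ inside $V$, respectively $A/A$ inside $V/A$), and the left-hand horizontal arrows are isomorphisms from the long exact sequences of the triples $(X,V,A)$ and $(X/A,V/A,A/A)$, using the vanishing $H_\ast(V,A)=0=H_\ast(V/A,A/A)$ forced by the two deformation retractions.

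The key geometric observation is that $q$ restricts to a homeomorphism $X\setminus A\to (X/A)\setminus(A/A)$, since $q$ identifies only points of $A$; the same holds for $V\setminus A \to (V/A)\setminus(A/A)$. Hence the rightmost vertical arrow in the diagram is a homeomorphism-induced isomorphism, and a diagram chase (or the Five Lemma) propagates this back through the diagram to give $q_\ast:H_n(X,A)\xrightarrow{\ \cong\ } H_n(X/A,A/A)$. The remaining identification $H_n(X/A,A/A)\cong H_n(X/A)$ is read off the long exact sequence of the pair $(X/A,A/A)$: since $A/A$ is a single point, its reduced homology vanishes in every degree.

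The main (and really the only) subtlety I anticipate is verifying that the deformation retraction of $V$ onto $A$ descends cleanly to one of $V/A$ onto $A/A$, since the whole argument rests on this being a legitimate deformation retraction of the quotient. Once that point is settled, the rest is a mechanical application of excision, the long exact sequence of a triple, and the Five Lemma.
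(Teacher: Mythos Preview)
Your proposal is correct and is essentially Hatcher's own argument for Proposition~2.22: use the good-pair neighborhood $V$, apply excision on both levels, and conclude via the long exact sequences of the triples together with the fact that $q$ is a homeomorphism off $A$. Note, however, that the paper itself does not give a proof of this statement at all; it is merely quoted from \cite{hatcher} as a standard tool, so there is no independent argument in the paper to compare against.
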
 
 The good pair $(X,A)$ also gives a long exact sequence.
 
\begin{thm}[Theorem $2.13$, page 114, \cite{hatcher}]\label{thm3.4}
There is an exact sequence
\begin{equation*}
\ldots \rightarrow H_n(A) \rightarrow H_n(X) \rightarrow H_n(X,A)\rightarrow H_{n-1}(A) \rightarrow \ldots \rightarrow H_0(X,A) \rightarrow 0.
\end{equation*}
\end{thm}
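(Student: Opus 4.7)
The plan is to note that Theorem~\ref{thm3.4} is a classical result from singular homology theory, stated as Theorem 2.13 in Hatcher, and to sketch the standard snake-lemma derivation rather than produce a self-contained proof.

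First I would define the relative singular chain complex by setting $C_n(X,A) := C_n(X)/C_n(A)$ and observing that the singular boundary operator descends to this quotient. The relative homology groups $H_n(X,A)$ are by definition the homologies of this complex. The inclusion $A \hookrightarrow X$ and the quotient map then fit into a short exact sequence of chain complexes
\begin{equation*}
0 \longrightarrow C_*(A) \longrightarrow C_*(X) \longrightarrow C_*(X,A) \longrightarrow 0.
\end{equation*}
Applying the zig-zag (snake) lemma to this short exact sequence of chain complexes yields a long exact sequence in homology, with connecting homomorphism $\partial : H_n(X,A) \to H_{n-1}(A)$ given by $[\alpha] \mapsto [\partial \tilde\alpha]$, where $\tilde\alpha \in C_n(X)$ is any lift of a relative-cycle representative $\alpha$.

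Since the paper declares $H_n$ to denote reduced homology, I would then verify that the sequence remains valid under this convention. For $n \geq 1$, reduced and unreduced homology coincide, so the only modification is at dimension $0$. Augmenting the absolute chain complexes by $\varepsilon : C_0 \to \mathbb{Z}$ produces the short exact sequence $0 \to \widetilde{C}_*(A) \to \widetilde{C}_*(X) \to C_*(X,A) \to 0$ (the relative complex is unchanged for $A \neq \emptyset$), and the snake lemma again yields the stated long exact sequence terminating at $H_0(X,A) \to 0$.

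There is no genuine obstacle to this argument; the result is standard textbook material, and the only realistic task for the authors is to cite Hatcher's Theorem 2.13 as they have done. The role of the theorem in the paper must be to combine it with Proposition~\ref{prop3.1}, allowing reduced homology of the quotient $X/A$ to be compared with that of $X$ and $A$, which is presumably how the homotopy-type calculation of $\mathcal{N}(KG_{3,k})$ in Theorem~\ref{thm3} will eventually be completed.
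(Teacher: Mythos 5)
Your proposal is correct: the paper simply cites this as Theorem 2.13 of Hatcher without proof, and your sketch (short exact sequence of singular chain complexes plus the snake lemma, with the standard adjustment for the reduced-homology convention at dimension $0$) is exactly the standard argument that the citation points to. Nothing further is needed.
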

 
\begin{cor}[Corollary $2.14$, page $114$, \cite{hatcher}]\label{cor3.5}
 $H_n(S^n)\cong \mathbb{Z}$ and $H_i(S^n)=0$ for $i\neq n,$ where $S^n$ is the $n$-dimensional sphere.
\end{cor}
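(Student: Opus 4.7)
The plan is to prove the statement by induction on $n$, using the long exact sequence machinery and the quotient isomorphism recorded in Theorem \ref{thm3.4} and Proposition \ref{prop3.1}. Since $\tilde{H}_n$ of a contractible space vanishes in every degree, I would exploit the good pair $(D^{n+1}, S^n)$ together with the homeomorphism $D^{n+1}/S^n \cong S^{n+1}$ to convert the computation for $S^{n+1}$ into a computation for $S^n$.

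For the base case $n=0$, I would note that $S^0$ consists of two points, so $\tilde{H}_0(S^0)\cong \mathbb{Z}$ (generated by the difference of the two $0$-chains) and $\tilde{H}_i(S^0)=0$ for $i\neq 0$, directly from the definition of reduced simplicial (or singular) homology. For the inductive step, assume the statement holds for $S^{n-1}$. Applying Theorem \ref{thm3.4} to the good pair $(D^n, S^{n-1})$ yields, for each $i\geq 1$, the exact segment
\begin{equation*}
H_i(D^n) \longrightarrow H_i(D^n, S^{n-1}) \longrightarrow H_{i-1}(S^{n-1}) \longrightarrow H_{i-1}(D^n).
\end{equation*}
Since $D^n$ is contractible, the outer terms vanish (in reduced homology), giving an isomorphism $H_i(D^n, S^{n-1}) \cong H_{i-1}(S^{n-1})$.

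Next I would invoke Proposition \ref{prop3.1} for the good pair $(D^n, S^{n-1})$: the quotient map induces
\begin{equation*}
H_i(D^n, S^{n-1}) \;\cong\; H_i\bigl(D^n/S^{n-1}\bigr) \;\cong\; H_i(S^n),
\end{equation*}
where the last isomorphism uses the standard homeomorphism $D^n/S^{n-1}\cong S^n$ (collapsing the boundary sphere of the disk to a point). Combining these two isomorphisms gives the suspension-type relation $H_i(S^n)\cong H_{i-1}(S^{n-1})$ for all $i\geq 1$. The inductive hypothesis then yields $H_n(S^n)\cong H_{n-1}(S^{n-1})\cong \mathbb{Z}$ and $H_i(S^n)\cong H_{i-1}(S^{n-1})=0$ for $i\neq n$, $i\geq 1$.

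The only remaining case is $i=0$ with $n\geq 1$. For this I would observe that $S^n$ is path-connected when $n\geq 1$, so $\tilde{H}_0(S^n)=0$ by the usual property of reduced zeroth homology. The main obstacle, if any, is simply recording the homeomorphism $D^n/S^{n-1}\cong S^n$ carefully (the only nontrivial input beyond the quoted algebraic results), since every other step is a direct application of the exact sequence and the quotient isomorphism already available in the paper.
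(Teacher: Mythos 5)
Your argument is correct; note, however, that the paper does not prove this statement at all---it is quoted verbatim as Corollary 2.14 of Hatcher and used as a black box. The induction you give (base case $S^0$, the long exact sequence of the good pair $(D^n,S^{n-1})$ with $D^n$ contractible, and the quotient isomorphism $H_i(D^n,S^{n-1})\cong H_i(D^n/S^{n-1})\cong H_i(S^n)$) is exactly the standard textbook proof of that corollary, so there is nothing to compare beyond saying your write-up faithfully reproduces the cited source's argument.
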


The short exact sequence $0\rightarrow A \rightarrow B \rightarrow C \rightarrow 0$ is said to {\it split}, if $B \cong A\bigoplus C$. 

\begin{lem}[Page $148$, \cite{hatcher}]\label{lem3.4}
If $C$ is free then every exact sequence $0\rightarrow A \rightarrow B \rightarrow C \rightarrow 0$ splits.
\end{lem}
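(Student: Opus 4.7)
The plan is to construct an explicit section $s\colon C \to B$ of the surjection $p\colon B \to C$, and then to deduce from it the isomorphism $B \cong A \oplus C$ that defines a split sequence.

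First I would exploit the freeness of $C$ by fixing a basis $\{c_i\}_{i \in I}$. Since $p$ is surjective, I can choose, for each $i \in I$, an element $b_i \in B$ with $p(b_i) = c_i$ (the axiom of choice is invoked here when $I$ is infinite). By the universal property of the free abelian group $C$, the set-theoretic assignment $c_i \mapsto b_i$ extends uniquely to a group homomorphism $s\colon C \to B$. The composite $p \circ s$ then agrees with $\mathrm{id}_C$ on the chosen basis, hence on all of $C$, so $s$ is a section.

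Next I would verify that the existence of such a section forces the required direct-sum decomposition. Define $\varphi\colon B \to A \oplus C$ by $b \mapsto \bigl(b - s(p(b)),\, p(b)\bigr)$; exactness gives $\ker p = \mathrm{im}(A \hookrightarrow B)$, so $b - s(p(b))$ lies in the image of $A$ and can be identified canonically with an element of $A$, making $\varphi$ well-defined. Its two-sided inverse is $(a,c) \mapsto a + s(c)$, and both compositions collapse to identities using $p \circ s = \mathrm{id}_C$ together with the injectivity of $A \hookrightarrow B$.

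The only step where freeness of $C$ is actually used is in extending the chosen lifts $c_i \mapsto b_i$ to a homomorphism $s\colon C \to B$; without a basis, a mere choice of lifts need not assemble into a homomorphism (witness $0 \to \mathbb{Z} \xrightarrow{\,\cdot 2\,} \mathbb{Z} \to \mathbb{Z}/2 \to 0$, which does not split). This single extension is therefore the sole non-formal ingredient of the argument, and it is not truly an obstacle but rather the precise point at which the hypothesis is consumed; everything else is routine diagram chasing.
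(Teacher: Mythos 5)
Your argument is correct and is precisely the standard one (the one given in the cited reference): lift a basis of $C$ through the surjection, extend by freeness to a section $s$, and use $b \mapsto \bigl(b - s(p(b)),\, p(b)\bigr)$ to realize $B \cong A \oplus C$. The paper itself offers no proof, quoting the result from Hatcher, so there is nothing further to compare.
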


In this article, $\si \sm\{ ijk\}$ and $\si \cup\ \{ ijk\}$ will indicate that $ijk \in \si$ and $ijk \nin \si$, respectively. For brevity, we use $\si \sm ijk$ and $\si \cup\  ijk$  instead of $\si \sm\{ ijk\}$ and $\si \cup\ \{ ijk\}$, respectively. Further, $\{a_1 <a_2<a_3    < \ldots < a_{k}\}$ will denote a chain.


\section{Proof of Theorem \ref{thm2}}
In this section, ${N}(\si)$ denotes the neighborhood of $\si$ in $ S_{3,k}$. 
For $s \in [k+6],$ define
\begin{equation}\label{eq41}
\begin{split}
I_{s} &=[k+6] \sm \{s-1, s, s+1\} \text{ and}\\ 
J_{s} & = \left\{\def\arraystretch{1.2}%
  \begin{array}{@{}c@{\quad}l@{}}
   [k+5]\sm [2] \hspace{0.5cm} & \text{if} ~~s=1,  \\
    ~[k+6] \sm [s+1] & \text{if} ~~1<s<k+5,\\
    \emptyset & \text{otherwise}.\\   
  \end{array}\right.
  \end{split}
\end{equation}
 
Let $V_{u}^k$ denote the set of all the unstable vertices of $KG_{3,k}$. For $t \in I_s$ and $u \in J_s$, define
\begin{equation}\label{eq42}
\begin{aligned}
 A_k^{s,t} & =\{\si \in N(S_{3,k}) \ | \ C_\si =\{s,s+1,t\}\},
\\
 B_k^{s,u} & =\{\si \in N(S_{3,k}) \ | \ C_\si =\{s,s+1,u,u+1\}\} ~~and
\\
C_k &=\{\si \in N(S_{3,k}) \ | \ \si\cap V_{u}^k \neq \emptyset\}.
\end{aligned}
\end{equation}

From the above defintions, the following is an easy consequence.
\begin{prop}\label{prop41}
Let $s_1,s_2 \in [k+6]$, $t_1 \in I_{s_1}, t_2 \in I_{s_2}, u_1 \in J_{s_1}$ and $u_2 \in J_{s_2}$. Then,
\begin{enumerate}

   \item[$(i)$] $A_k^{s_1,t_1} \cap A_k^{s_2,t_2}\neq \emptyset \Longleftrightarrow s_1=s_2$ and $t_1=t_2$.
   \item[$(ii)$] $B_k^{s_1,u_1} \cap B_k^{s_2,u_2}\neq \emptyset \Longleftrightarrow s_1=s_2$ and $u_1=u_2$.
   \item[$(iii)$] $A_k^{s_1,t_1} \cap B_k^{s_2,u_2}= \emptyset$.
  \end{enumerate}
\end{prop}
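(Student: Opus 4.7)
The plan is to reduce everything to bookkeeping of the sets $\{s, s+1, t\}$ and $\{s, s+1, u, u+1\}$ in $[k+6]$, since each class pins down $C_\sigma$ completely and $C_\sigma$ is an intrinsic invariant of $\sigma$. Thus every assertion becomes a statement about when two such sets can coincide under the constraints $t \in I_s$ and $u \in J_s$.

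Part (iii) will come out almost for free. For $\sigma \in A_k^{s_1, t_1}$, the condition $t_1 \in I_{s_1} = [k+6] \setminus \{s_1 - 1, s_1, s_1 + 1\}$ guarantees that $s_1, s_1+1, t_1$ are pairwise distinct, so $|C_\sigma| = 3$. For $\sigma \in B_k^{s_2, u_2}$, the condition $u_2 \in J_{s_2}$ together with a short cyclic check (using $u_2 \geq s_2 + 2$ and the upper bound on $u_2$) gives $|C_\sigma| = 4$. These cardinalities disagree, so the two classes cannot share a simplex.

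For part (i), the nontrivial direction is to deduce from $\{s_1, s_1+1, t_1\} = \{s_2, s_2+1, t_2\}$ with $t_i \in I_{s_i}$ that $(s_1, t_1) = (s_2, t_2)$. The key observation is that the pair $\{s, s+1\}$ inside such a triple is canonically recoverable: the only alternative consecutive pair that can arise is $\{s+1, t\}$ in the degenerate case $t = s + 2$, but trying to reparametrize with $s' = s + 1$ forces $t' = s$, which violates the constraint $s \notin I_{s+1}$. Hence $\{s, s+1\}$ is uniquely determined by the underlying triple; since $k + 6 \geq 3$, the element $s$ is then recovered as the cyclic predecessor in this pair, and $t$ is what remains.

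Part (ii) follows by the same strategy but with a heavier case analysis, and this I expect to be the main technical step. Possible reparametrizations of $\{s, s+1, u, u+1\}$ as another four-set of the same shape must be ruled out in several degenerate configurations: (a) $u = s + 2$, where the set becomes four consecutive integers and a priori admits three ways to split into two adjacent pairs; and (b) cyclic wrap-around when $u = k + 6$ (so $u + 1 \equiv 1 \pmod{k+6}$), which creates an extra adjacent pair $\{k+6, 1\}$. In each case the inequalities $s + 2 \leq u \leq k + 6$ (or $u \leq k + 5$ when $s = 1$) built into the definition of $J_s$ eliminate every alternative parametrization, leaving $(s, u)$ uniquely determined by $C_\sigma$ and completing the proof.
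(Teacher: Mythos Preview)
Your reduction to recovering $(s,t)$ or $(s,u)$ from the set $C_\sigma$ is exactly right, and this is what the paper has in mind: it offers no proof at all beyond calling the proposition ``an easy consequence'' of the definitions, so you are simply supplying the omitted bookkeeping. Your treatment of~(i) and~(iii) is complete.

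For~(ii), two small adjustments. First, there is one more degenerate configuration beyond your~(a) and~(b): when $s=1$ and $u=k+5$, the set $C_\sigma=\{1,2,k+5,k+6\}$ is again four cyclically consecutive elements, and needs the same check as your case~(a). Second, in your case~(b) the pair $\{k+6,1\}$ is $\{u,u+1\}$ itself rather than an \emph{extra} adjacency; the genuinely new adjacencies in that situation arise only when additionally $s=2$ (giving $\{1,2\}$) or $s=k+4$ (giving $\{k+5,k+6\}$), and the alternative parametrizations they suggest are killed by the bound $s_2\le k+4$. These are cosmetic fixes; your overall scheme is sound and matches the paper.
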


Using the above proposition, we construct a disjoint decomposition of subsets of $\N(S_{3,k})$.

\begin{prop}\label{prop42}
\begin{enumerate}
\item[$(i)$] $\si \in \N(S_{3,k})$ and $ N(\si)\cap V_{u}^k\neq \emptyset \rar \si \subseteq V(SG_{3,k})$. 

\item[$(ii)$] $\N(S_{3,k})= \N(SG_{3,k})\bigsqcup( \bigsqcup\limits_{s \in [k+6]}\bigsqcup\limits_{t \in I_s}A_{k}^{s,t})\bigsqcup( \bigsqcup\limits_{s \in [k+4]}\bigsqcup\limits_{u \in J_s}B_{k}^{s,u})\bigsqcup C_k$.

\item[$(iii)$] If $\varphi_k : \mathcal{F}(\N(S_{3,k})) \longrightarrow \{a_k^1 > a_k^2 >a_k^3 >a_k^4\}$ defined as ,

\begin{equation}\label{eqn43}
  \varphi_k(\sigma) = \left\{\def\arraystretch{1.2}%
  \begin{array}{@{}c@{\quad}l@{}}
    a_k^1 & \text{if} ~~\si \in \bigsqcup\limits_{s \in [k+6]}\bigsqcup\limits_{t \in I_s}A_{k}^{s,t} ,  \\
    a_k^2 & \text{if} ~~\si \in \bigsqcup\limits_{s \in [k+4]}\bigsqcup\limits_{u \in J_s}B_{k}^{s,t},\\
    a_k^3 & \text{if} ~~\si \in C_k,\\
    a_k^4 & \text{if} ~~\si \in \mathcal{F}(\N(SG_{3,k})),\\
  \end{array}\right.
\end{equation}
then $\varphi_k$ is a poset map.
\end{enumerate}
\end{prop}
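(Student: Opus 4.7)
Part (i) is immediate: if $v\in N(\si)\cap V_u^k$ then $v$ is unstable and $(v,\alpha)\in E(S_{3,k})$ for each $\alpha\in\si$, so by the definition of $S_{3,k}$ every $\alpha\in\si$ must be stable, i.e., $\si\subseteq V(SG_{3,k})$.

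For part (ii) I first argue pairwise disjointness of the four pieces. By (i), every $\si\in C_k$ admits only stable common neighbors in $S_{3,k}$; since every $3$-subset of $\{s,s+1,t\}$ and of $\{s,s+1,u,u+1\}$ is unstable, $C_k$ meets neither $A_k^{s,t}$ nor $B_k^{s,u}$, and the presence of an unstable vertex puts $C_k$ outside $\N(SG_{3,k})$. Elements of $\N(SG_{3,k})$ carry a stable common neighbor while elements of $A_k^{s,t}$ or $B_k^{s,u}$ do not, so these three families are pairwise disjoint; disjointness inside each family and between the $A_k$'s and $B_k$'s follows from Proposition \ref{prop41} together with a comparison of $|C_\si|$. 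For coverage, take $\si\in\N(S_{3,k})$. If $\si\cap V_u^k\neq\emptyset$ then $\si\in C_k$; otherwise $\si\subseteq V(SG_{3,k})$ and every $3$-subset of $C_\si$ is a common neighbor in $S_{3,k}$, so if some such $3$-subset is stable then $\si\in\N(SG_{3,k})$. The crux is the remaining case, where every $3$-subset of $C_\si$ is unstable. Here I would fix $c\in C_\si$ and set $A=C_\si\setminus\{c-1,c,c+1\}$; since $c$ is cyclically adjacent to no element of $A$, any pair $x,y\in A$ must itself be cyclically consecutive, and three pairwise consecutive elements do not exist in $\mathbb{Z}/(k+6)\mathbb{Z}$, so $|A|\leq 2$. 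The borderline case $|C_\si|=5$ (necessarily $c-1,c+1\in C_\si$ with $A=\{x,x+1\}$) is eliminated by observing that $\{c-1,c+1,x\}$ being unstable forces $x=c+2$ (the alternative $x=c-2$ is excluded because then $x+1=c-1\notin A$), after which the triple $\{c-1,c+1,c+3\}\subseteq C_\si$ is stable, a contradiction. The surviving configurations $C_\si=\{s,s+1,t\}$ with $t\in I_s$ and $C_\si=\{s,s+1,u,u+1\}$ with $u\in J_s$ place $\si$ in $A_k^{s,t}$ and $B_k^{s,u}$ respectively. This structural classification is the main obstacle.

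Part (iii) is a direct case check. For $\tau\subsetneq\si$ in $\F(\N(S_{3,k}))$ we have $S_\tau\subseteq S_\si$ and hence $C_\tau\supseteq C_\si$. If $\si\in\N(SG_{3,k})$, then $\tau\subseteq V(SG_{3,k})$ and any stable $3$-subset of $C_\si$ is a stable $3$-subset of $C_\tau$, so $\tau\in\N(SG_{3,k})$. If $\si\in C_k$, then either $\tau$ retains an unstable vertex and $\tau\in C_k$, or $\tau\subseteq V(SG_{3,k})$ and the stable common neighbor of $\si$ guaranteed by (i) still lies in $C_\tau$, placing $\tau\in\N(SG_{3,k})$. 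If $\si\in B_k^{s,u}$, then $\tau\subseteq V(SG_{3,k})$ and $C_\tau\supseteq\{s,s+1,u,u+1\}$: either $C_\tau$ equals this set and $\tau\in B_k^{s,u}$, or $|C_\tau|\geq 5$ and the structural bound from (ii) yields a stable $3$-subset of $C_\tau$, so $\tau\in\N(SG_{3,k})$. Finally, if $\si\in A_k^{s,t}$, the inequality $\varphi_k(\tau)\leq a_k^1$ holds automatically since $a_k^1$ is the top of the chain. In every case $\varphi_k(\tau)\leq\varphi_k(\si)$, so $\varphi_k$ is a poset map.
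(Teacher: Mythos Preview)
Your proof is correct and follows essentially the same approach as the paper: part (i) is the same one-line observation, part (ii) is the same case analysis on $C_\sigma$ (you give a more explicit argument bounding $|C_\sigma|$ where the paper simply asserts ``otherwise $C_\sigma$ will contain a stable $3$-set''), and part (iii) is the same case check, only organized dually---you fix $\sigma$ and bound $\varphi_k(\tau)$ from above, while the paper fixes $\tau$ and bounds $\varphi_k(\sigma)$ from below. One small point: in (ii) you eliminate $|C_\sigma|=5$ carefully but then jump to ``the surviving configurations'' without verifying that $|C_\sigma|=4$ forces the shape $\{s,s+1,u,u+1\}$; this follows by the same device (e.g.\ choose $c$ with $c-1\notin C_\sigma$, so $|C_\sigma|\le |A|+2$ and $|C_\sigma|=4$ gives two consecutive pairs), and is worth one extra line.
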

\begin{proof}
No two unstable vertices are adjacent in $S_{3,k}$. Thus, $\si \subseteq V(SG_{3,k})$, thereby proving $(i)$. 

Let $\si \in \N(S_{3,k})$. If $N(\si)\cap V(SG_{3,k})\neq \emptyset$ and $\si \cap V_{u}^k=\emptyset$, then from Proposition \ref{prop42}$(i)$, $\si \in \N(SG_{3,k})$. If $\si \cap V_{u}^k\neq \emptyset$, then $\si \in C_k$. Now let, $N(\si)\cap V(SG_{3,k})=\emptyset$, {\it i.e.}, $N(\si)\subset V_{u}^k$. Here, $C_\si$ is either $\{s,s+1,t\}$ or $\{s,s+1,u,u+1\}$ (otherwise $C_\si$ will contain a stable $3$ set). Therefore, $\si$ is in either $ A_{k}^{s,t}$ or $B_{k}^{s,u}$. The result now follows by using Equation \eqref{eq42}.

Consider, $\tau, \si \in  \mathcal{F}(\N(S_{3,k}))$ with $\tau \subseteq \si.$ Since $C_\si \subseteq C_\tau$, we see that if $C_\tau = \{s,s+1,t\}$, then $C_\si = \{s,s+1,t\}$. Further, if $C_\tau=\{s,s+1,u,u+1\}$, then $N(\si)\subseteq V_{u}^k$, implying that $\varphi_k(\si) \geq \varphi_k(\tau)$. Let, $\tau \in \varphi_k^{-1}(a_k^3)$, {\it i.e.}, $\tau \in C_{k}$. Here, $\tau \cap V_{u}^k\neq \emptyset$ implies that $\si \cap V_{u}^k\neq \emptyset$ (since $\tau \subseteq \si$). Therefore, $\si \in C_k$, implying that $\varphi_k$ is a poset map.
\end{proof}

We will now construct a perfect acyclic matching on $\varphi_k^{-1}(a_k^i)$, for $i=1,2,3$.   First, consider the case $i=1$. 
If $\si \in A_{k}^{s_1,t_1}, \tau \in A_{k}^{s_2,t_2}$ and  $\tau \subseteq \si$, then from Proposition \ref{prop41}$(i)$, $s_1=s_2$ and $t_1=t_2.$ Therefore, to define a perfect acyclic matching on $\varphi_k^{-1}(a_k^1)$, it is sufficient to define a perfect acyclic matching on $A_{k}^{s,t}$ for $s \in [k+6]$ and $t \in I_s$. We first construct the matchings on $A_k^{1,\ell}$, $\ell \in \{3,4,\ldots,k+5\}, \ k\geq 1.$

\begin{prop}\label{prop21} There exist perfect acyclic matchings, $M_k^{1,\ell}$ on $A_{k}^{1,\ell}$, $\ell \in \{3,4, \ldots , k+5\}$, $k\geq 1$.
\end{prop}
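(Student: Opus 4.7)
The plan is to apply Lemma \ref{pac}: for each $\ell \in \{3, 4, \ldots, k+5\}$, I will exhibit a single stable $3$-subset $x_\ell \subseteq [k+6]\setminus\{1,2,\ell\}$ such that every $\sigma \in A_k^{1,\ell}$ satisfies both $\sigma \cup \{x_\ell\} \in A_k^{1,\ell}$ and $\sigma \setminus \{x_\ell\} \in A_k^{1,\ell}$, so that $(A_k^{1,\ell})_{x_\ell} = A_k^{1,\ell}$. Then $M_k^{1,\ell} := M(A_k^{1,\ell})_{x_\ell}$ is automatically a perfect acyclic matching on $A_k^{1,\ell}$.

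First, I would record an explicit description of $A_k^{1,\ell}$. Since $C_\sigma = \{1,2,\ell\}$, the only possible common neighbor of $\sigma$ in $S_{3,k}$ is the triple $\{1,2,\ell\}$, which is unstable. Because an edge in $S_{3,k}$ touching an unstable vertex must have its other endpoint stable, every element of $\sigma$ must be a stable $3$-subset of $[k+6]\setminus\{1,2,\ell\}$. Hence $A_k^{1,\ell}$ consists precisely of those $\sigma \subseteq \mathcal{S}_\ell$ with $\bigcup\sigma = [k+6]\setminus\{1,2,\ell\}$, where $\mathcal{S}_\ell$ denotes the family of stable $3$-subsets of $[k+6]\setminus\{1,2,\ell\}$.

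With this description, the condition $\sigma \cup \{x_\ell\} \in A_k^{1,\ell}$ follows automatically once $x_\ell \in \mathcal{S}_\ell$, because $x_\ell$ is stable and disjoint from $\{1,2,\ell\}$, so the union and common neighbor are preserved. The delicate condition is $\sigma\setminus\{x_\ell\} \in A_k^{1,\ell}$ whenever $x_\ell \in \sigma$: the cover $\bigcup\sigma = [k+6]\setminus\{1,2,\ell\}$ must persist after removing $x_\ell$, which means each of the three coordinates of $x_\ell$ must already appear in some vertex of $\sigma$ other than $x_\ell$. To guarantee this, I would isolate \emph{forced} stable $3$-subsets of $[k+6]\setminus\{1,2,\ell\}$ — those containing elements that lie in only one or two stable $3$-subsets of the set, typically elements adjacent (modulo $k+6$) to the boundary pair $\{1,2\}$ or to $\ell$ — and then choose $x_\ell$ so that each of its three coordinates is contained in at least one forced $3$-subset distinct from $x_\ell$ itself.

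The main obstacle is carrying out this choice uniformly in $\ell$. The cases $\ell = 3$ and $\ell = k+5$ are boundary cases where $\{1,2,\ell\}$ is essentially an arc in the cyclic order, whereas intermediate values of $\ell$ split $[k+6]\setminus\{1,2,\ell\}$ into two separate arcs and may require a different pivot. I expect to proceed by case analysis on the position of $\ell$, checking small $k$ directly (in particular handling cases where $A_k^{1,\ell}$ is empty, so that the claim holds vacuously). Should a single pivot fail for some intermediate $\ell$, the fallback is to stratify $A_k^{1,\ell}$ by the presence or absence of a few key stable vertices via an order-preserving poset map, apply Lemma \ref{pac} on each fiber, and assemble the fiber-wise matchings into a global acyclic matching using the Cluster Lemma \ref{lem34}.
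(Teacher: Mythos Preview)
Your primary strategy---finding a single pivot $x_\ell$ with $(A_k^{1,\ell})_{x_\ell}=A_k^{1,\ell}$---breaks down already at $k=4$, $\ell=3$. In that case $[10]\setminus\{1,2,3\}=\{4,\dots,10\}$ and one checks that \emph{every} stable $3$-subset of this set appears as an essential member of some $3$-element cover; for instance $468$ is essential in $\{468,579,57(10)\}$, $469$ in $\{469,58(10),47(10)\}$, $46(10)$ in $\{46(10),579,58(10)\}$, and so on through the full list of ten stable triples. Each such cover is a genuine element of $A_4^{1,3}$, and removing the named vertex enlarges $C_\sigma$. Hence no single pivot $x_\ell$ satisfies $\sigma\setminus\{x_\ell\}\in A_4^{1,3}$ for all $\sigma$, and your ``forced vertex'' heuristic cannot rescue this: for $k\ge 4$ no element of $\{4,\dots,k+6\}$ is covered by a unique stable triple, so there are no forced vertices at all.

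Your fallback---stratify by a few key vertices and apply the Cluster Lemma---is the right \emph{shape} of argument, but as stated it is only a plan, not a proof: the entire difficulty lies in specifying the stratification and showing each fibre admits a perfect matching. The paper does exactly this, but it requires a genuine induction on $k$. One fixes a specific pivot (e.g.\ $46(r+6)$ when $\ell=3$) that does \emph{not} yield a perfect matching by itself; the unmatched simplices are then partitioned into seven pieces according to the value of $C_{\sigma\setminus 46(r+6)}$, and each piece is put in bijection (via a shift $\ominus t$) with some $A_\beta^{s,t}$ for $\beta<r$, where the inductive hypothesis supplies a perfect acyclic matching. Without this inductive machinery your proposal does not close.
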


\begin{proof} The matching is constructed by the method of induction on $k$.
 
 Let $k=1$. 
Here, $N(12\ell) = \emptyset$ for $\ell \in \{3,5\}$ and $N(124)=N(126)=\{357\}.$ In each of these cases, $C_{N(12\ell)}\neq \{1,2,\ell\} $. Thus, $A_1^{1,\ell}=\emptyset $ for $\ell \in \{3,4,5,6\}$.

If $k=2$, then
\[
  A_2^{1,i}= \left\{\def\arraystretch{1.2}%
  \begin{array}{@{}c@{\quad}l@{}}
    \{\{357,368\},\{357,358,368\}\} & \text{if } i=4,\\
    \{\{358,468\},\{358,368,468\}\} & \text{if }i=7,\\
    \emptyset & \text{if } i \in \{3,5,6\}.\\
  \end{array}\right.
\]
Hence, $(A_2^{1,4})_{358} =A_2^{1,4}$ and $(A_2^{1,7})_{368} =A_2^{1,7}$ {\it i.e.}, $M(A_2^{1,4})_{358}$ and $M(A_2^{1,7})_{368}$ are perfect acyclic matchings on $A_2^{1,4}$ and $A_2^{1,7}$, respectively.

Inductively, assume that for $r \geq 3$, $ k \in [r-1]  $ and $\ell \in \{3,\ldots, k+5\}$, there exist perfect acyclic matchings on $A_k^{1,\ell}$. By Proposition \ref{prop2.1}$(i)$ and $(ii)$, these matchings induce perfect acyclic matchings on $A_k^{s,t}$, for each $k \in [r-1], s \in [k+6] $ and $t \in I_s$. 

Now consider $k=r.$  Here, for each $\ell \in \{3, \ldots, r+5\}$, we define an element matching $(A_r^{1,\ell})_v$ on $A_r^{1,\ell}$, where the vertex $v \in V(SG_{3,r})$ is chosen in such a way that $A_r^{1,\ell} \sm (A_r^{1,\ell})_{v}$ can be partitioned into smaller sets. On these subsets, using the existing matchings obtained above, we construct the required matchings. 

\begin{enumerate} 
\item Let $\ell=3$. Firstly, $M(A_r^{1,3})_{46(r+6)}$ is an acyclic matching on $A_r^{1,3}$.  We now define a matching on the complement $ A_r^{1,3} \sm(A_r^{1,3})_{46(r+6)}.$

\begin{claim}\label{claim1}
$\sigma \in A_r^{1,3} \sm (A_r^{1,3})_{46(r+6)}\rar 46(r+6) \in \si$ and $C_{\si \sm 46(r+6)} \neq \{1,2,3\}$.
\end{claim}
Suppose that $46(r+6) \nin \si.$ Here, $C_\si=\{1,2,3\}$ implies that $C_{\si \cup 46(r+6)}=\{1,2,3\},$ thereby showing that $\si \in (A_r^{1,3})_{46(r+6)},$ a contradiction. Thus, $46(r+6) \in \si.$ 

Further, $C_{\si \sm 46(r+6)}\neq \{1,2,3\}$ (as $C_{\si\sm 46(r+6)}=\{1,2,3\} \rar \si, \si\sm 46(r+6) \in A_r^{1,3} \rar \si \in (A_r^{1,3})_{46(r+6)}$). This completes the proof of Claim \ref{claim1}.

From Claim \ref{claim1}, $C_{\si \sm 46(r+6)} \bigcap \{4,6,r+6\} \neq \emptyset$. Thus, we have a disjoint decomposition of $A_r^{1,3} $ as $A_r^{1,3}= (A_r^{1,3})_{46(r+6)} \bigsqcup (\bigsqcup\limits_{i=1 }^{7} \Delta^3_i)$, where
\begin{equation}\label{gamma123}
\begin{split}
\Delta^3_1 & =\{\sigma \in A_r^{1,3} \ | \ C_{\sigma \setminus 46(r+6)}= \{1,2,3,4\}\},\\
\Delta^3_2 & =\{\sigma \in A_r^{1,3} \ | \ C_{\sigma \setminus 46(r+6)}=\{1,2,3,6\}\}, \\
\Delta^3_3 & =\{\sigma \in A_r^{1,3} \ | \ C_{\sigma \setminus 46(r+6)}=\{1,2,3,r+6\}\}, \\
\Delta^3_4 & =\{\sigma \in A_r^{1,3} \ | \ C_{\sigma \setminus 46(r+6)}=\{1,2,3,4,6\}\}, \\
\Delta^3_5 & =\{\sigma \in A_r^{1,3} \ | \ C_{\sigma \setminus 46(r+6)}=\{1,2,3,4,r+6\}\}, \\ 
\Delta^3_6 & =\{\sigma \in A_r^{1,3} \ | \ C_{\sigma \setminus 46(r+6)}=\{1,2,3,6,r+6\}\} \text{ and } \\
\Delta^3_7 & =\{\sigma \in A_r^{1,3} \ | \ C_{\sigma \setminus 46(r+6)}=\{1,2,3,4,6,r+6\}\}.
\end{split}
\end{equation}
The following gives a bijective correspondence from each of the above ${\Delta_i^3}^{'}$s to a  $A_\beta^{1,j}$, where $\beta <r$ and $j \in\{3,4,5\}$. Here $A_{\beta}^{1,j}$ is considered as a subset of $\N (S_{3,r})$, {\it i.e.},
\begin{equation}\label{eq2.7}
\tau \in A_\beta^{1,j} \rar C_\tau = \{1,2,j,\beta+7,\beta+8,\ldots, r+6\}.
\end{equation}

\begin{claim}\label{cla4.2}
The following maps are bijective:
\begin{enumerate}
\item $f_1 :  \Delta^3_{1} \rightarrow A_{r-1}^{1,3} $ defined by $f_1(\si)=\{\si \sm 46(r+6)\}\ominus 1$, $r >2$.
\item $f_2 : \Delta^3_{2} \rightarrow A_{r-1}^{1,5}$ defined by $f_2(\si)=\{\si \sm 46(r+6)\}\ominus 1$, $r >2$.
\item $f_3 : \Delta^3_{3} \rightarrow A_{r-1}^{1,3}$ defined by $f_3(\si)=\{\si \sm 46(r+6)\}$, $r >2$.
\item $f_4 : \Delta^3_{4} \rightarrow A_{r-2}^{1,4}$ defined by $f_4(\si)=\{\si \sm 46(r+6)\}\ominus 2 $, $r >2$.
\item $f_5 : \Delta^3_{5} \rightarrow A_{r-2}^{1,3}$ defined by $f_5(\si)=\{\si \sm 46(r+6)\}\ominus 1 $, $r >2$.
\item $f_6 : \Delta^3_{6} \rightarrow A_{r-2}^{1,5}$ defined by $f_6(\si)=\{\si \sm 46(r+6)\}\ominus 1$, $r >2$.
\item $f_7 : \Delta^3_{7} \rightarrow A_{r-3}^{1,4}$ defined by $f_7(\si)=\{\si \sm 46(r+6)\}\ominus 2$, $r >3$.
\end{enumerate}
\end{claim}

\begin{enumerate}
\item[$(1)$] $f_i$ is well defined for all $i \in \{1,2,\ldots,7\}.$ 

Let $\si_i\in \Delta^3_i$ and $f_i(\si_i)=\tau_i$, for each $i \in [7]$. 
In $\N(S_{3,r})$, $C_{\si_1 \sm 46(r+6)} =\{1,2,3,4\},$ $C_{\si_2 \sm 46(r+6)} =\{1,2,3,6\},$ $C_{\si_3 \sm 46(r+6)} =\{1,2,3,r+6\}$, $C_{\tau_1} =\{r+6,1,2,3\}, \ C_{\tau_2} =\{r+6,1,2,5\}$ and $C_{\tau_3} =\{1,2,3,r+6\}$. Thus, in $\N(S_{3,r-1})$, $C_{\tau_1} =C_{\tau_3}=\{1,2,3\}$ and $C_{\tau_2}=\{1,2,5\}$. Therefore, by Equation \eqref{eq42}, $\tau_1, \tau_3 \in A_{r-1}^{1,3}$ and $\tau_2 \in A_{r-1}^{1,5}$ implying that $f_1, f_2$ and $f_3$ are well defined maps. 
Similarly, it can be shown that$f_4, f_5, f_6$ and $f_7$ are also well defined.

\item[$(2)$] $f_i$ is injective for all $i \in \{1,2,\ldots,7\}.$ 

Let $\si_1, \si_2 \in \Delta^3_i$ such that  $f_i(\sigma_1)=f_i(\si_2)$. From the definition of the $f_i {'s}$, there exists $t\in \{0,1,2\}$ such that $\{\si_1 \sm 46(r+6)\}\ominus t=\{\si_2 \sm 46(r+6)\} \ominus t$, {\it i.e.}, $\si_1\sm 46(r+6)=\si_2\sm 46(r+6)$. Thus, $\si_1=\si_2$.

\item[$(3)$] $f_i$ is surjective for all $i \in \{1,2,\ldots,7\}.$ 

Let $\tau \in A_{r-1}^{1,3}$. In $\N(S_{3,r-1})$, $C_\tau=\{1,2,3\}$. From Equation \eqref{eq2.7}, $C_\tau=\{1,2,3,r+6\}$ in $\N(S_{3,r})$, {\it i.e.} $46(r+6) \nin \tau$, and $C_{\tau \cup 46(r+6)}= \{1,2,3\}$ {\it i.e.}, $\tau \cup 46(r+6) \in \Delta^3_3$. Further, $C_{\tau\oplus 1}=\{1,2,3,4\}$ and $C_{\{\tau\oplus 1\} \cup \{46(r+6)\}}=\{1,2,3\},$ {\it i.e.}, $\{\tau\oplus 1 \} \cup \{46(r+6)\} \in \Delta^3_1$. Therefore, $f_1(\{\tau\oplus 1 \} \cup \{46(r+6)\})=f_3(\tau \cup 46(r+6))=\tau$ implying that $f_1$ and $f_3$ are surjective maps. By similar arguments, all the other maps are also surjective.
\end{enumerate}
To construct the perfect acyclic matching on $A_r^{1,3}$, 
define,\newline $\theta_3 : A_r^{1,3} \longrightarrow \{ q_7 < q_6 < q_5 <q_4 <q_3 <q_2 <q_1 < q_0 \}$ by
\begin{equation}\label{eq4.5}
  \theta_{3}(\sigma) = \left\{\def\arraystretch{1.2}%
  \begin{array}{@{}c@{\quad}l@{}}
   q_i & \text{if $\sigma \in \Delta_i^3, \ 1\leq i \leq 7$, }\\
   q_{0}& \text{otherwise.}
  \end{array}\right.
\end{equation}
Firstly, we see that if $\si \in \Delta_{i_1}^3$ and $\tau \in \Delta_{i_2}^3$ with $\tau \subseteq \si$, then $i_1\leq i_2$.

Let $\si, \tau \in A_r^{1,3}, \tau \subseteq \si$ and $\tau \in {\theta_3}^{-1}(q_0)$. If $\si \in A_r^{1,3} \sm (A_r^{1,3})_{46(r+6)},$ then from Claim \ref{claim1}, there exists $i \in [7]$, such that $\si \in \Delta_i^3$. Therefore, $\gamma \in {\theta_3}^{-1}(q_0)$ if and only if $C_{\gamma \sm 46(r+6)} = \{1,2,3\}$. Since $C_{\tau \sm 46(r+6)}=\{1,2,3\}$, $\forall \ \tau \in {\theta_3}^{-1}(q_0)$, we observe that $\tau \subseteq \si$ implies that $C_{\si \sm 46(r+6)}=\{1,2,3\}$, {\it i.e.}, $\si \in {\theta_3}^{-1}(q_0)$. Therefore,  $\theta_3$ is a poset map.

By the induction hypothesis, there exist perfect acyclic matchings $M(A_{\beta}^{s,t})$ on $A_\beta^{s,t}$ for $\beta<r$ and $s,t \in [\beta+6]$. Claim \ref{cla4.2} induces the perfect acyclic matchings $f_{i}^{-1} (M(A_{\beta}^{s,t}))= M_i^3$ on $\Delta_i^3$, $\forall \ i \in [7]$. Since $\theta_{3}$ is a poset map, from Lemma \ref{lem34}, $M_r^{1,3}=(\bigsqcup \limits_{i=1}^{7} M_i^3) \bigsqcup M(A_r^{1,3})_{46(r+6)}$ is a perfect acyclic matching on $A_r^{1,3}$.

\item $\ell=4$. As $M(A_r^{1,4})_{35(r+6)}$ is an acyclic matching on $A_r^{1,4}$, we consider the complement $ A_r^{1,4} \sm(A_r^{1,4})_{35(r+6)}.$ From an argument similar to that in Claim \ref{claim1}, we see that
$\sigma \in A_r^{1,4} \sm (A_r^{1,4})_{35(r+6)}\rar 35(r+6) \in \si$ and $C_{\si \sm 35(r+6)} \neq \{1,2,4\}$. This implies that $C_{\si \sm 35(r+6)} \bigcap \{3,5, \\ r+6\} \neq \emptyset$. Thus, we have a disjoint decomposition of $A_r^{1,4} $ as $A_r^{1,4}= (A_r^{1,4})_{35(r+6)} \bigsqcup (\bigsqcup\limits_{i=1 }^{7} \Delta^4_i)$, where
$\Delta^4_1 =\{\sigma \in A_r^{1,4} \ | \ C_{\sigma \setminus 35(r+6)}= \{1,2,3,4\}\},$
$\Delta^4_2 =\{\sigma \in A_r^{1,4} \ | \ C_{\sigma \setminus 35(r+6)}=\{1,2,4,5\}\},$
$\Delta^4_3 =\{\sigma \in A_r^{1,4} \ | \ C_{\sigma \setminus 35(r+6)}=\{1,2,4,r+6\}\},$
$\Delta^4_4 =\{\sigma \in A_r^{1,4} \ | \ C_{\sigma \setminus 35(r+6)}=\{1,2,3,4,5\}\},$
$\Delta^4_5 =\{\sigma \in A_r^{1,4} \ | \ C_{\sigma \setminus 35(r+6)}=\{1,2,3,4,r+6\}\},$
$\Delta^4_6 =\{\sigma \in A_r^{1,4} \ | \ C_{\sigma \setminus 35(r+6)}=\{1,2,4,5,r+6\}\}$  and
$\Delta^4_7 =\{\sigma \in A_r^{1,4} \ | \ C_{\sigma \setminus 35(r+6)}=\{1,2,3,4,5,r+6\}\}.$

As in the case, $\ell =3$, it can be shown that the following maps are bijective:
\begin{enumerate}
\item $f_1 :  \Delta^4_{1} \rightarrow A_{r-1}^{1,3} $ defined by $f_1(\si)=\{\si \sm 35(r+6)\}\ominus 1$, $r >2$.
\item $f_2 : \Delta^4_{2} \rightarrow A_{r-1}^{3,1}$ defined by $f_2(\si)=\{\si \sm 35(r+6)\}\ominus 1$, $r >2$.
\item $f_3 : \Delta^4_{3} \rightarrow A_{r-1}^{1,4}$ defined by $f_3(\si)=\{\si \sm 35(r+6)\}$, $r >2$.
\item $f_4 : \Delta^4_{4} \rightarrow A_{r-2}^{1,3}$ defined by $f_4(\si)=\{\si \sm 35(r+6)\}\ominus 2 $, $r >2$.
\item $f_5 : \Delta^4_{5} \rightarrow A_{r-2}^{1,3}$ defined by $f_5(\si)=\{\si \sm 35(r+6)\}\ominus 1 $, $r >2$.
\item $f_6 : \Delta^4_{6} \rightarrow A_{r-2}^{3,1}$ defined by $f_6(\si)=\{\si \sm 35(r+6)\}\ominus 1$, $r >2$.
\item $f_7 : \Delta^4_{7} \rightarrow A_{r-3}^{1,3}$ defined by $f_7(\si)=\{\si \sm 35(r+6)\}\ominus 2$, $r >3$.
\end{enumerate}

By arguments similar to those in the case $\ell=3$, $M_r^{1,4}=(\bigsqcup \limits_{i=1}^{7} M_i^4) \bigsqcup M(A_r^{1,4})_{35(r+6)}$ is a perfect acyclic matching on $A_r^{1,4}$.

\item $\ell=5$. In this case, $M(A_r^{1,5})_{36(r+6)}$ is an acyclic matching on $A_r^{1,5}$.  As in the earlier cases
$\sigma \in A_r^{1,5} \sm (A_r^{1,5})_{36(r+6)} \rar 36(r+6) \in \si$ and $C_{\si \sm 36(r+6)} \neq \{1,2,5\}$. Further,
 $A_r^{1,5}= (A_r^{1,5})_{36(r+6)} \bigsqcup (\bigsqcup\limits_{i=1 }^{7} \Delta^5_i)$, where
$\Delta^5_1  =\{\sigma \in A_r^{1,5} \ | \ C_{\sigma \setminus 36(r+6)}= \{1,2,3,5\}\},$
$\Delta^5_2 =\{\sigma \in A_r^{1,5} \ | \ C_{\sigma \setminus 36(r+6)}=\{1,2,5,6\}\}, $
$\Delta^5_3  =\{\sigma \in A_r^{1,5} \ | \ C_{\sigma \setminus 36(r+6)}=\{1,2,5,r+6\}\},$
$\Delta^5_4 =\{\sigma \in A_r^{1,5} \ | \ C_{\sigma \setminus 36(r+6)}=\{1,2,3,5,6\}\},$
$\Delta^5_5 =\{\sigma \in A_r^{1,5} \ | \ C_{\sigma \setminus 36(r+6)}=\{1,2,3,5,r+6\}\},$
$\Delta^5_6 =\{\sigma \in A_r^{1,5} \ | \ C_{\sigma \setminus 36(r+6)}=\{1,2,5,6,r+6\}\}$ and
$\Delta^5_7  =\{\sigma \in A_r^{1,5} \ | \ C_{\sigma \setminus 36(r+6)}=\{1,2,3,5,6,r+6\}\}.$
The following maps are bijective:
\begin{enumerate}
\item $f_1 :  \Delta^5_{1} \rightarrow A_{r-1}^{1,4} $ defined by $f_1(\si)=\{\si \sm 36(r+6)\}\ominus 1$, $r >2$.
\item $f_2 : \Delta^5_{2} \rightarrow A_{r-1}^{4,1}$ defined by $f_2(\si)=\{\si \sm 36(r+6)\}\ominus 1$, $r >2$.
\item $f_3 : \Delta^5_{3} \rightarrow A_{r-1}^{1,5}$ defined by $f_3(\si)=\{\si \sm 36(r+6)\}$, $r >2$.
\item $f_4 : \Delta^5_{4} \rightarrow A_{r-2}^{3,1}$ defined by $f_4(\si)=\{\si \sm 36(r+6)\}\ominus 2 $, $r >2$.
\item $f_5 : \Delta^5_{5} \rightarrow A_{r-2}^{1,4}$ defined by $f_5(\si)=\{\si \sm 36(r+6)\}\ominus 1 $, $r >2$.
\item $f_6 : \Delta^5_{6} \rightarrow A_{r-2}^{4,1}$ defined by $f_6(\si)=\{\si \sm 36(r+6)\}\ominus 1$, $r >2$.
\item $f_7 : \Delta^5_{7} \rightarrow A_{r-3}^{3,1}$ defined by $f_7(\si)=\{\si \sm 36(r+6)\}\ominus 2$, $r >3$.
\end{enumerate}
Here, $M_r^{1,5}=(\bigsqcup \limits_{i=1}^{7} M_i^5) \bigsqcup M(A_r^{1,5})_{36(r+6)}$ is a perfect acyclic matching on $A_r^{1,5}$, where $M_i^5$ is the perfect acyclic matching on $\Delta_i^5$ for $i \in [7]$, induced by the above bijective maps. 

\item $5< \ell \leq r+5$. As in the case of $l \in \{3,4,5\}$, we see that $M(A_r^{1,\ell})_{3(\ell-1)(r+6)}$ is an acyclic matching on $A_r^{1,\ell}$ and
$\sigma \in A_r^{1,\ell} \sm (A_r^{1,\ell})_{3(\ell-1)(r+6)} \rar 3(\ell-1)(r+6) \in \si$ and $C_{\si \sm 3(\ell-1)(r+6)} \neq \{1,2,\ell\}$.
This gives $A_r^{1,\ell}= (A_r^{1,\ell})_{3(\ell-1)(r+6)} \bigsqcup (\bigsqcup\limits_{i=1 }^{7} \Delta^{\ell}_i)$, where $\Delta^{\ell}_{i} \subset A_r^{1,\ell}$, $\forall i \in [7]$ and
$ \si \in \Delta^{\ell}_1 \Leftrightarrow C_{\sigma \setminus 3(\ell-1)(r+6)}= \{1,2,3,\ell\}\},$
$\si \in \Delta^{\ell}_2  \Leftrightarrow \ C_{\sigma \setminus 3(\ell-1)(r+6)}=\{1,2,\ell-1,\ell\}\},$
$\si \in \Delta^{\ell}_3  \Leftrightarrow C_{\sigma \setminus 3(\ell-1)(r+6)}=\{1,2,\ell,r+6\}\},$
$ \si \in \Delta^{\ell}_4  \Leftrightarrow C_{\sigma \setminus 3(\ell-1)(r+6)}=\{1,2,3,\ell-1,\ell\}\},$
$ \si \in \Delta^{\ell}_5  \Leftrightarrow C_{\sigma \setminus 3(\ell-1)(r+6)}=\{1,2,3,\ell,r+6\}\},$
$\si \in \Delta^{\ell}_6  \Leftrightarrow C_{\sigma \setminus 3(\ell-1)(r+6)}=\{1,2,\ell-1,\ell,r+6\}\}$  and
$\si \in \Delta^{\ell}_7   \Leftrightarrow C_{\sigma \setminus 3(\ell-1)(r+6)}=\{1,2,3,\ell-1,\ell,r+6\}\}.$
The following maps are bijective:
\begin{enumerate}
\item $f_1 :  \Delta^{\ell}_{1} \rightarrow A_{r-1}^{1,\ell-1} $ defined by $f_1(\si)=\{\si \sm 3(\ell-1)(r+6)\}\ominus 1$, $r >2$.
\item $f_2 : \Delta^{\ell}_{2} \rightarrow A_{r-1}^{\ell-2,1}$ defined by $f_2(\si)=\{\si \sm 3(\ell-1)(r+6)\}\ominus 1$, $r >2$.
\item $f_3 : \Delta^{\ell}_{3} \rightarrow A_{r-1}^{1,\ell}$ defined by $f_3(\si)=\{\si \sm 3(\ell-1)(r+6)\}$, $r >2$.
\item $f_4 : \Delta^{\ell}_{4} \rightarrow A_{r-2}^{\ell-3,1}$ defined by $f_4(\si)=\{\si \sm 3(\ell-1)(r+6)\}\ominus 2 $, $r >2$.
\item $f_5 : \Delta^{\ell}_{5} \rightarrow A_{r-2}^{1,\ell-1}$ defined by $f_5(\si)=\{\si \sm 3(\ell-1)(r+6)\}\ominus 1 $, $r >2$.
\item $f_6 : \Delta^{\ell}_{6} \rightarrow A_{r-2}^{\ell-2,1}$ defined by $f_6(\si)=\{\si \sm 3(\ell-1)(r+6)\}\ominus 1$, $r >2$.
\item $f_7 : \Delta^{\ell}_{7} \rightarrow A_{r-3}^{\ell-3,1}$ defined by $f_7(\si)=\{\si \sm 3(\ell-1)(r+6)\}\ominus 2$, $r >3$.
\end{enumerate}
In this case, $M_r^{1,\ell}=(\bigsqcup \limits_{i=1}^{7} M_i^\ell) \bigsqcup M(A_r^{1,\ell})_{3(\ell-1)(r+6)}$ is a perfect acyclic matching on $A_r^{1,\ell}$ for $6 \leq \ell \leq r+5$, where $M_i^\ell$ is the perfect acyclic matching on $\Delta_i^\ell$ for $i \in [7]$. 
This completes the proof of Proposition \ref{prop21}.
\end{enumerate}
\end{proof}
From Proposition \ref{prop2.1}$(i)$ and $(ii)$, there exists $\ell \in \{3,4,\ldots,k+5\}$, such that $A_{k}^{s,t} = A_k^{1,\ell} \oplus (s-1)$. Therefore, using Proposition \ref{prop21}, $M_k^{s,t}=M_k^{1,\ell}\oplus(s-1)$ is a perfect acyclic matching on $A_k^{s,t}$.  Therefore, we have a perfect acyclic matching on $A_k^{s,t}$ for $s\in [k+6], \ t \in I_s$. This induces the matching on $\varphi_k^{-1}(a_k^1).$ Using Proposition \ref{prop41}$(i)$ and Lemma \ref{lem34}, we have proved that :
  
\begin{lem}\label{lemma21}
$\bigsqcup\limits_{s \in [k+6]}\bigsqcup\limits_{t \in I_s}M_{k}^{s,t}$ is a perfect acyclic matching on $\varphi_k^{-1}(a_k^1)=\bigsqcup\limits_{s \in [k+6]}\bigsqcup\limits_{t \in I_s}A_{k}^{s,t}$. 
\end{lem}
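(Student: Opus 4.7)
The plan is to bootstrap the base matchings $M_k^{1,\ell}$ produced by Proposition~\ref{prop21} to all of the blocks $A_k^{s,t}$ through the shift action $\oplus(s-1)$, and then to aggregate these blockwise matchings into a single matching on $\varphi_k^{-1}(a_k^1)$ via the Cluster Lemma (Lemma~\ref{lem34}).

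First I would make the shift identification precise. For fixed $s \in [k+6]$ and $t \in I_s$, set $\ell \equiv t - (s-1) \pmod{k+6}$; since $t \notin \{s-1,s,s+1\}$, one has $\ell \in \{3,4,\ldots,k+5\}$. By Proposition~\ref{prop2.1}(ii) the translation $\sigma \mapsto \sigma \oplus (s-1)$ is an automorphism of $\F(\N(S_{3,k}))$, and $C_{\sigma \oplus (s-1)} = C_\sigma \oplus (s-1)$, so any face with $C_\sigma = \{1,2,\ell\}$ is carried to a face with $C = \{s,s+1,t\}$. Hence $A_k^{1,\ell} \oplus (s-1) = A_k^{s,t}$, and transporting $M_k^{1,\ell}$ through this automorphism yields a perfect acyclic matching $M_k^{s,t} := M_k^{1,\ell} \oplus (s-1)$ on $A_k^{s,t}$ for every admissible pair $(s,t)$.

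To splice these matchings together, I would regard $Q = \{(s,t) : s \in [k+6],\ t \in I_s\}$ as an antichain and define $\psi : \varphi_k^{-1}(a_k^1) \to Q$ by $\psi(\sigma)=(s,t)$ when $\sigma \in A_k^{s,t}$; this is well defined by Proposition~\ref{prop41}(i). For $\psi$ to be order-preserving into an antichain I must show that $\tau \subseteq \sigma$ with both faces in $\varphi_k^{-1}(a_k^1)$ forces them into a common block. But $\tau \subseteq \sigma$ gives $C_\tau \supseteq C_\sigma$, while membership in $\varphi_k^{-1}(a_k^1)$ pins both $|C_\tau|$ and $|C_\sigma|$ to $3$, so $C_\tau = C_\sigma$ and both lie in the same $A_k^{s,t}$. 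Lemma~\ref{lem34} then assembles the $M_k^{s,t}$ into an acyclic matching on $\varphi_k^{-1}(a_k^1)$, which is perfect since each block is perfectly matched and the blocks exhaust $\varphi_k^{-1}(a_k^1)$ disjointly. I anticipate no serious obstacle here: all substantive combinatorics was discharged in Proposition~\ref{prop21}, and the only genuine verification remaining is the rigid-cardinality check that $|C_\sigma|=3$ is preserved under inclusions that stay inside $\varphi_k^{-1}(a_k^1)$.
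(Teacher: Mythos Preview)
Your proposal is correct and follows essentially the same route as the paper: the shift $\oplus(s-1)$ transports the matchings $M_k^{1,\ell}$ of Proposition~\ref{prop21} to $A_k^{s,t}$, and the Cluster Lemma (Lemma~\ref{lem34}) together with Proposition~\ref{prop41}(i) glues them into a perfect acyclic matching on $\varphi_k^{-1}(a_k^1)$. Your explicit cardinality check that $|C_\tau|=|C_\sigma|=3$ forces $C_\tau=C_\sigma$ is exactly the content of the paper's observation (made just before Proposition~\ref{prop21}) that comparable faces in $\varphi_k^{-1}(a_k^1)$ lie in the same block.
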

Now, consider $\varphi_k^{-1}(a_k^2) =\bigsqcup\limits_{s \in [k+4]}\bigsqcup\limits_{u \in J_s}B_{k}^{s,u}$, where $B_{k}^{s,u}$ is as defined in Equation \eqref{eq42}.

\begin{lem}\label{lem4.11}  The map $f_{s,u} :  B_{k}^{s,u} \longrightarrow A_{k-1}^{s+k+5-u,k+5},$ $s \in [k+4]$ and $u \in J_s$, defined by $f_{s,u}(\si)=\si \oplus (k+5-u)$ is bijective.  
\end{lem}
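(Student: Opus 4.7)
The map $f_{s,u}$ is essentially a cyclic relabelling. My plan is to show it is the restriction to $B_k^{s,u}$ of the shift $\sigma \mapsto \sigma \oplus (k+5-u)$ on $\N(S_{3,k})$, which is a bijection of $\N(S_{3,k})$ onto itself by Proposition \ref{prop2.1}$(ii)$. The shift amount is chosen precisely so that the pair $\{u, u+1\} \subset C_\sigma$ is sent onto $\{k+5, k+6\}$; since no vertex label in the embedded copy of $\N(S_{3,k-1})$ inside $\N(S_{3,k})$ contains $k+6$, this forces the image to lie in the image of $A_{k-1}^{s+k+5-u,\,k+5}$ under the embedding of equation \eqref{eq2.7}.

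For well-definedness, I would use the pointwise-shift identity $C_{\sigma \oplus j} = C_\sigma \oplus j$ (immediate, since the shift acts on each triple, hence on the union $S_\sigma$, hence on its complement $C_\sigma$). Applied to $\sigma \in B_k^{s,u}$, this gives
\[
C_{f_{s,u}(\sigma)} \;=\; \{s, s+1, u, u+1\} \oplus (k+5-u) \;=\; \{s', s'+1, k+5, k+6\},
\]
where $s' = s+k+5-u$. By \eqref{eq2.7}, this is exactly the $C$-set of an element of $A_{k-1}^{s',\,k+5}$ when viewed inside $\N(S_{3,k})$, with $k+6$ playing the role of the extra vertex $\beta+7 = k+6$. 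A short case split on $s=1$ versus $2 \leq s \leq k+4$ using \eqref{eq41} yields $s' \in \{1,\ldots, k+3\}$ in every case, so the required constraints $s' \in [k+5]$ and $k+5 \in I_{s'}$ are automatic. For bijectivity, the reverse shift $g(\tau) = \tau \ominus (k+5-u)$ sends $A_{k-1}^{s',\,k+5}$ back into $B_k^{s,u}$ by the same computation run backwards; since $\oplus(k+5-u)$ and $\ominus(k+5-u)$ are mutually inverse operations on $[k+6]$, so are $f_{s,u}$ and $g$.

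The main obstacle I anticipate is the bookkeeping in the well-definedness step: correctly identifying which embedded copy of $A_{k-1}^{\cdot,\cdot}$ in $\N(S_{3,k})$ the image lands in (especially the role played by the extra vertex $k+6$ in \eqref{eq2.7}) and verifying the bounds on $s'=s+k+5-u$ across the cases of $J_s$. Once this bookkeeping is settled, bijectivity is a formality since the cyclic shifts involved are manifestly invertible.
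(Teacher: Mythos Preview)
Your proposal is correct and follows essentially the same route as the paper: compute $C_{\sigma \oplus (k+5-u)}=\{s',s'+1,k+5,k+6\}$ with $s'=s+k+5-u$, reinterpret this in $\N(S_{3,k-1})$ via the embedding convention \eqref{eq2.7}, and use the inverse shift $\ominus(k+5-u)$ for surjectivity. Your extra bookkeeping on the range of $s'$ is more than the paper does and is not strictly needed for the lemma as stated; note also that the claim ``$k+5\in I_{s'}$'' narrowly fails at $s'=1$ (in the $(k{-}1)$-context $I_1=[k+5]\setminus\{k+5,1,2\}$), but this is harmless since $A_{k-1}^{1,k+5}=A_{k-1}^{k+5,2}$ as sets.
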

\begin{proof}
Since $B_{1}^{s,u}=\emptyset$ for $s \in [5]$ and $u \in J_s$, we assume that $k>1$.
Let $\si \in B_{k}^{s,u}$ and $\tau=f_{s,u}(\si)$. In $\N(S_{3,k})$, from Equation \eqref{eq42}, $C_{\si} =\{s,s+1,u,u+1\}$ and $C_{\tau} =\{s+k+5-u,s+k+6-u,k+5,k+6\}$. Thus, in $\N(S_{3,k-1})$, $C_{\tau}=\{s+k+5-u,s+k+6-u,k+5\}$. Therefore, $\tau \in A_{k-1}^{s+k+5-u,k+5}$, implying that $f_{s,u}$ is a well defined map.  

Let $\si_1, \si_2 \in B_{k}^{s,u}$ and  $f_{s,u}(\sigma_1)=f_{s,u}(\si_2)$. Here, $\si_1\oplus (k+5-u)=\si_2\oplus (k+5-u)$, {\it i.e.}, $\si_1=\si_2$., Thus $f_{s,u}$ is an injective map.

If $\tau \in A_{k-1}^{s+k+5-u,k+5}$, then $C_\tau=\{s+k+5-u,s+k+6-u,k+5\}$ and $\{s+k+5-u,s+k+6-u,k+5,k+6\}$ in $\N(S_{3,k-1})$ and $\N(S_{3,k})$ respectively. This implies that $C_{\tau \ominus (k+5-u)}=\{s,s+1,u,u+1\}$. From Equation \eqref{eq42}, $\tau \ominus (k+5-u) \in B_{k}^{s,u}$. Therefore, $f_{s,u}(\tau \ominus (k+5-u))=\tau$ implying that $f_{s,u}$ is a surjective map.
\end{proof}

The perfect acyclic matching on $A_{k-1}^{s+k+5-u,k+5}$,  $s \in [k+4]$, $u \in J_s$, obtained from Lemma \ref{lemma21}, induces a perfect acyclic matching, $N_{k}^{s,u}$ on $B_{k}^{s,u}$ via the bijective map $f_{s,u}$. 
Using Proposition \ref{prop41}$(ii)$ and Lemma \ref{lem34}, we have the following:

\begin{lem} \label{lem4.2}
 $\bigsqcup\limits_{s \in [k+4]}\bigsqcup\limits_{u \in J_s}N_{k}^{s,u}$ is a perfect acyclic matching on $\varphi_k^{-1}(a_k^2)$.
\end{lem}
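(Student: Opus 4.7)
The plan is to apply the Cluster Lemma (Lemma \ref{lem34}) to glue the previously-constructed matchings $N_k^{s,u}$ into a single acyclic matching on $\varphi_k^{-1}(a_k^2)$. By Proposition \ref{prop41}$(ii)$ the sets $B_k^{s,u}$ are pairwise disjoint, and by Proposition \ref{prop42}$(ii)$ their disjoint union equals $\varphi_k^{-1}(a_k^2)$. I therefore define a map $\psi : \varphi_k^{-1}(a_k^2) \longrightarrow Q$, where $Q = \{(s,u) : s \in [k+4],\ u \in J_s\}$ is equipped with the trivial (antichain) partial order, by $\psi(\sigma) = (s,u)$ whenever $\sigma \in B_k^{s,u}$.

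The main verification is that $\psi$ is order-preserving, i.e., that whenever $\tau \subseteq \sigma$ in $\varphi_k^{-1}(a_k^2)$ they must lie in the same $B_k^{s,u}$. Suppose $\tau \in B_k^{s_1,u_1}$ and $\sigma \in B_k^{s_2,u_2}$. Then $S_\tau \subseteq S_\sigma$, and hence $C_\sigma \subseteq C_\tau$. By Equation \eqref{eq42}, $C_\sigma = \{s_2, s_2+1, u_2, u_2+1\}$ and $C_\tau = \{s_1, s_1+1, u_1, u_1+1\}$; the condition $u_i \in J_{s_i}$ forces $u_i \geq s_i + 2$, which, together with a brief check of the boundary cases $s=1$, $s = k+4$, and $u = k+6$, shows that each of these two sets has cardinality exactly $4$. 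Consequently $C_\sigma = C_\tau$, forcing $(s_1,u_1) = (s_2,u_2)$, so $\psi(\tau) = \psi(\sigma)$ and $\psi$ is order-preserving into the antichain $Q$.

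Each fibre $\psi^{-1}(s,u) = B_k^{s,u}$ carries the perfect acyclic matching $N_k^{s,u}$ obtained by transferring via the bijection $f_{s,u}$ of Lemma \ref{lem4.11} the perfect acyclic matching on $A_{k-1}^{s+k+5-u,\, k+5}$ furnished by Lemma \ref{lemma21}. The Cluster Lemma then yields that $\bigsqcup_{s,u} N_k^{s,u}$ is an acyclic matching on $\varphi_k^{-1}(a_k^2)$, and it is perfect because the $B_k^{s,u}$ partition $\varphi_k^{-1}(a_k^2)$ and each $N_k^{s,u}$ is already perfect on its part. I do not anticipate any serious obstacle: the only content beyond invoking the standard tools is the cardinality check that ensures the $B_k^{s,u}$ form an antichain in the face poset of $\mathcal{N}(S_{3,k})$.
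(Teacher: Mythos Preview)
Your proposal is correct and follows exactly the approach the paper indicates: the paper simply states that Lemma~\ref{lem4.2} follows from Proposition~\ref{prop41}$(ii)$ and the Cluster Lemma (Lemma~\ref{lem34}), and you have spelled out precisely this argument by exhibiting the map to an antichain and checking that inclusions cannot cross between distinct $B_k^{s,u}$'s. The only detail worth tightening is that, once you have shown $C_\sigma = C_\tau$, the conclusion $(s_1,u_1)=(s_2,u_2)$ is most cleanly obtained by noting that $\tau$ then also lies in $B_k^{s_2,u_2}$ and invoking Proposition~\ref{prop41}$(ii)$ directly, rather than re-deriving uniqueness of the representation by hand.
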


Now, consider $\varphi_k^{-1}(a_k^3)=C_k$ where $C_{k}$ is as defined in  Equation \eqref{eq42}.  
For $v \in V_{u}^k$,  where $V_{u}^k$, the set of unstable vertices of $KG_{3,k}$, has the lexicographic ordering of triples, define
\begin{equation}\label{eqU}
C_{k}^{v} =\{\si \in C_k \ | \ v \in \si \text{ and } v^\prime \nin \si, \ \forall \ v^\prime \in V_{u}^k \text{ with } v^\prime <v\}.
\end{equation}

\begin{prop}\label{prop43}
If $V_{u}^k=\{v_1<v_2<\ldots <v_m\}$, then
\begin{enumerate}
\item[$(i)$] $C_k=\bigsqcup\limits_{v \in V_{u}^k} C_{k}^v $.
\item[$(ii)$] $\varphi_k:C_k \longrightarrow \{b_{v_1}>b_{v_2}> \ldots >b_{v_m}\}$, defined by $\varphi_k^{-1}(b_{v_s})=C_k^{v_s}$ for $s\in [m]$, is a poset map.
\end{enumerate}
\end{prop}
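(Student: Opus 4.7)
The plan is to read both parts directly off the definition of $C_k^v$ in Equation \eqref{eqU}, using only the fact that the lexicographic order makes $V_u^k$ a finite chain. For part $(i)$, I would associate to each $\sigma \in C_k$ the unique minimum $v_\sigma$ of the nonempty finite subset $\sigma \cap V_u^k$ of $V_u^k$; by construction $\sigma$ contains no unstable vertex smaller than $v_\sigma$, so $\sigma \in C_k^{v_\sigma}$, which shows that the $C_k^v$'s cover $C_k$. For disjointness, if $\sigma$ lay in both $C_k^v$ and $C_k^{v'}$ with $v < v'$, then $v \in \sigma$ from the first membership while the defining clause of $C_k^{v'}$ forbids any unstable vertex smaller than $v'$ in $\sigma$, a contradiction; hence $C_k = \bigsqcup_{v \in V_u^k} C_k^v$.

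For part $(ii)$, part $(i)$ already makes the rule $\sigma \mapsto b_{v_\sigma}$ well defined on $C_k$, matching the prescription $\varphi_k^{-1}(b_{v_s}) = C_k^{v_s}$. To verify order-preservation, I would take $\tau, \sigma \in C_k$ with $\tau \subseteq \sigma$ and observe the elementary inclusion $\tau \cap V_u^k \subseteq \sigma \cap V_u^k$, which forces $v_\tau \geq v_\sigma$ in the lexicographic order on $V_u^k$. Since the codomain chain is indexed so that $b_v > b_{v'}$ precisely when $v < v'$, this yields $\varphi_k(\sigma) = b_{v_\sigma} \geq b_{v_\tau} = \varphi_k(\tau)$, which is the direction of order-preservation adopted for $\varphi_k$ in Proposition \ref{prop42}.

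The proposition is essentially bookkeeping — just the extraction of a minimum from a nonempty subset of a finite linear order — so I do not anticipate any real obstacle. Its purpose, I expect, is to refine the stratification of $C_k$ inside $\varphi_k^{-1}(a_k^3)$ so that the Cluster Lemma (Lemma \ref{lem34}) subsequently reduces the construction of an acyclic matching on $C_k$ to producing one on each stratum $C_k^v$, most likely by an element matching $M(C_k^v)_{v'}$ with a companion vertex $v'$ tailored to each unstable $v$.
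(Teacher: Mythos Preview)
Your proof is correct and follows essentially the same route as the paper: both arguments hinge on extracting, for each $\sigma\in C_k$, the least unstable vertex it contains, and then observing that $\tau\subseteq\sigma$ forces that least vertex to be no larger for $\sigma$. Your write-up is in fact more careful than the paper's, which does not spell out the disjointness of the $C_k^{v}$'s. Your closing guess about the next step is slightly off---the paper does not match $C_k^{v}$ directly via a single element matching, but first refines each $C_k^{12\ell}$ by the length $L(N(\sigma))$ and then uses a more elaborate matching---though this does not affect the correctness of your proof of the proposition itself.
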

\begin{proof}
\begin{enumerate}
\item[$(i)$]  Consider $\si \in C_k$. From Equation \eqref{eq42}, $\si \cap V_{u}^k\neq \emptyset$. Thus, $\si \in C_k^v$ for some $v \in V_{u}^k$. Since $C_k^v \subseteq C_k$, $\forall \ v \in V_{u}^k$, we see that $\bigsqcup\limits_{v \in V_{u}^k} C_{k}^v \subseteq C_k$.

\item[$(ii)$] Let $\si, \tau \in C_k$ and $\tau \subseteq \si$. Let $\tau \in C_k^{v_s}$, where $v_s \in V_{u}^k$. Here, $v_s \in \tau$ implies that $v_s \in \si$. Therefore, $\varphi_k(\si) \geq \varphi_k(\tau)$ implying that $\varphi_k$ is a poset map.
\end{enumerate}
\end{proof}

Since, we want to define a perfect acyclic matching on $C_k$, using Lemma \ref{lem34} and Proposition \ref{prop43}, we observe that it is sufficient to define one on $C_k^v$ for each $v \in V_{u}^k$. But, from
Proposition \ref{prop2.1}$(i)$ and $(ii)$, we observe that for this, it is sufficient to define a perfect acyclic matching on $C_k^{12\ell}$ for $\ell \in [k+5]\sm \{1,2\}$.  In this direction, we first define the following:

\begin{Def}
Let $v=s_1s_2s_3 \in V(KG_{3,k})$, where $1\leq s_1<s_2<s_3\leq k+6$. If $\si \in C_k$, define
\begin{equation}\label{eq4.11}
\begin{split}
Cover(v) & =\{t \in [k+6] \ | \ s_1 \leq t \leq s_3\}, \ Co(\si)  =\bigcup\limits_{u \in \si}Cover(u), \\
Cover(\si) & = \{t \ | \ 1 \leq min(Co(\si))\leq t \leq max(Co(\si))\leq k+6\}, \ L(\si)  = \#Cover(\si), \\
Comp(v,\ell) & =\{t \in Cover(v) \ | \ |t-\ell|>1, t \neq s_1,s_2,s_3\}   \text{ and } R(v,\ell) = min\{Comp(v,\ell)\}.
\end{split} 
\end{equation}
\end{Def}

{\bf Example.} In $S_{3,4}$, $Cover(357)=\{3,4,5,6,7\}, \ Co(\{234,679\})=\{2,3,4,6,7,8,9\}, \\ Cover(\{234,679\})=[9]\sm \{1\}, L(\{124,678\})=8, Comp(359,8)=\{4,6\}$ and $R(359,8)=4$. 
If $v=12\ell,$ for any $\si \in C_k^v$, from Equation \eqref{eq42}, $N(\si)\subseteq V(SG_{3,k})$, {\it i.e.}, $L(\si)\geq 5$.
Further, $1,2 \nin Cover(N(\si))$ implies that $L(N(\si)) \leq k+4$.

Let
$C_{k,n}^{12\ell} = \{\si \in C_k^{12\ell} \ | \  L(N(\si))=n \}, \ n \in \{5,\ldots, k+4\}, \ \ell \in \{3,\ldots,k+5\}.$

Here, $C_k^{12\ell}=\bigsqcup\limits_{5}^{k+4}C_{k,n}^{12\ell} $.
Since, $\tau \subseteq \si $ in $C_k^{12\ell}$ implies that $L(N(\si))\leq L(N(\tau))$, the map $\psi_{k}^{\ell} : C_k^{12\ell} \longrightarrow \{c_5>c_6>\ldots >c_{k+4}\} $ defined by $(\psi_{k}^{\ell})^{-1}(c_n)=C_{k,n}^{12\ell}$ is a poset map.
Therefore, to construct an acyclic matching on $C_k^{12\ell}$, it is sufficient to define one on $C_{k,n}^{12\ell}$ for each $n \in \{5,\ldots,k+4\}$.
To get this matching first define $\B_{n}^{\ell} \subseteq V(SG_{3,k})$ as 
\begin{equation*}
\B_{n}^{\ell}=\{u \in N(12\ell) \ | \ u \in N(\si) \text{ for some } \si \in C_{k,n}^{12\ell} \}.
\end{equation*}

\begin{claim}\label{claim4.5}
$Comp(u, \ell)\neq \emptyset$, $\forall \ u \in \B_{n}^{\ell}$.
\end{claim}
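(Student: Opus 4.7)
The plan is to unpack the hypothesis $u \in \B_{n}^{\ell}$ into concrete combinatorial facts about the triple $u$ and then run a short case analysis on its width. To begin, $u \in \B_{n}^{\ell}$ forces $u \in N(12\ell)$ in $S_{3,k}$. Since $12\ell$ is unstable and no two unstable vertices are adjacent in $S_{3,k}$ (this is exactly the observation used in the proof of Proposition~\ref{prop42}$(i)$), the neighbour $u$ must be stable, and adjacency in $KG_{3,k}$ additionally forces $u \cap \{1,2,\ell\} = \emptyset$. Writing $u = s_1 s_2 s_3$ with $s_1 < s_2 < s_3$, the condition $s_1 \geq 3$ rules out any wrap-around obstruction to stability, so stability of $u$ reduces to $s_2 \geq s_1 + 2$ and $s_3 \geq s_2 + 2$. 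In particular $d := s_3 - s_1 \geq 4$, and $\ell \notin \{s_1, s_2, s_3\}$.

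Next I would reformulate the claim. Setting $G := Cover(u) \sm u = \{s_1+1, \ldots, s_3-1\} \sm \{s_2\}$, the set $Comp(u,\ell)$ is precisely $\{t \in G : |t - \ell| > 1\}$, so proving $Comp(u,\ell) \neq \emptyset$ is equivalent to showing $G \not\subseteq \{\ell-1, \ell, \ell+1\}$. Note that $|G| = d - 2 \geq 2$. If $d \geq 6$, then $|G| \geq 4 > 3$ and the containment fails on cardinality grounds. If $d = 5$, then $G$ is either $\{s_1+1, s_1+3, s_1+4\}$ (when $s_2 = s_1+2$) or $\{s_1+1, s_1+2, s_1+4\}$ (when $s_2 = s_1+3$); neither is a block of three consecutive integers, so again $G \not\subseteq \{\ell-1, \ell, \ell+1\}$.

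The hard part will be the tight case $d = 4$, which is the only place where the hypothesis $\ell \notin u$ genuinely enters. Here $s_2 = s_1 + 2$ is forced and $G = \{s_1+1, s_1+3\}$. Since these two elements differ by $2$, they can both lie in $\{\ell-1, \ell, \ell+1\}$ only if $\ell = s_1 + 2$, i.e.\ $\ell = s_2$, contradicting $\ell \notin u$. Thus in every case some element of $G$ is at distance greater than $1$ from $\ell$, which is exactly the claim. I do not anticipate any further subtlety; the entire argument is structural, so no auxiliary matching or induction is needed for this step.
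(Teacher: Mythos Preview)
Your proof is correct. The paper's argument reaches the same conclusion more directly: it simply observes that stability gives $s_1+1 < s_2 < s_3-1$, so both $s_1+1$ and $s_3-1$ lie in $Cover(u)\setminus\{s_1,s_2,s_3\}$, and then notes that if $|s_1+1-\ell|\le 1$ then $|s_3-1-\ell|>1$ (the boundary case here again using $\ell\neq s_2$). So instead of your case split on the width $d=s_3-s_1$, the paper exhibits two fixed candidate elements and argues that $\ell$ cannot be close to both. Your approach is perfectly valid; the paper's is just shorter, while yours makes the role of the hypothesis $\ell\notin u$ (which the paper hides behind ``clearly'') fully explicit.
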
 

Let $u=s_1s_2s_3 \in \B_{n}^{\ell}$, where $s_1<s_2<s_3$. Here, $\B_{n}^{\ell} \subseteq V(SG_{3,k})$ implies that $s_1+1<s_2<s_3-1$. Further, $s_1+1,s_3-1 \in Cover(u)$. If $|s_1+1-\ell| >1$, then $s_1+1 \in Comp(u,\ell)$. If $|s_1+1-\ell| \leq 1$, then clearly $|s_3-1-\ell|>1$. Therefore, $s_3-1 \in Comp(u,\ell)$.

Consider $\B_{n}^\ell=\{u_1<u_2< \ldots <u_{t}\}$. Using  Claim \ref{claim4.5} and $\B_{n}^{\ell} \subseteq V(SG_{3,k})$, we get $R(u_i,\ell) \in \{3,\ldots,k+5\}$. Moreover, $|R(u_i,\ell)-\ell|>1, \ \forall \ i \in \{1,\ldots,t\}$. Therefore, $1\ell R(u_i,\ell) \in V(SG_{3,k})$. 

Define a matching $M_{k,n}^\ell$ on $C_{k,n}^{12\ell}=(\psi_{k}^{\ell})^{-1}(c_n)$ by 
\[
  \sigma \longrightarrow \left\{\def\arraystretch{1.2}%
  \begin{array}{@{}c@{\quad}l@{}}
    \sigma \cup 1\ell R(u_1,\ell) & \text{if } u_1 \in N(\si), \ 1\ell R(u_1,\ell) \nin \sigma,\\
    \sigma \cup 1\ell R(u_2,\ell) & \text{if } u_2 \in N(\si), \ u_1 \notin N(\si), \ 1\ell R(u_2,\ell) \nin \sigma,\\

    \vdots \\
    \sigma \cup 1\ell R(u_t,\ell) & \text{if } u_t \in N(\si),\ u_s \notin N(\si) \ \forall \ 1 \leq s < t, \ 1\ell R(u_t,\ell) \nin \si.\\
  \end{array}\right.
\]

\begin{lem}\label{lem4.3} 
$M_{k,n}^\ell$ is a perfect acyclic matching on $C_{k,n}^{12\ell}$.
\end{lem}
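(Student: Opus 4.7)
The plan is to verify well-definedness, perfectness, and acyclicity of $M_{k,n}^\ell$. Write $\mu(\sigma)$ for the smallest $u_t \in \B_n^\ell$ with $u_t \in N(\sigma)$, and set $w(\sigma) := 1\ell R(\mu(\sigma),\ell)$; the rule then pairs $\sigma$ with $\sigma \cup w(\sigma)$ when $w(\sigma) \nin \sigma$ and with $\sigma \sm w(\sigma)$ otherwise. Because $\sigma$ is a simplex of $\N(S_{3,k})$, $N(\sigma) \neq \emptyset$, and $N(\sigma) \subseteq \B_n^\ell$ by the definition of $\B_n^\ell$; hence $\mu(\sigma)$ always exists, which is what drives perfectness.

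For well-definedness I first check that $w(\sigma) \in V(SG_{3,k})$: the three coordinates are distinct because $R(\mu(\sigma),\ell) \notin \{1,\ell\} \cup \mu(\sigma)$ by Claim \ref{claim4.5} and the definition of $Comp$, and no two are consecutive modulo $k+6$ since $\ell \in \{3,\ldots,k+5\}$, $R \geq 3$, and $|R-\ell|>1$. As $\mu(\sigma) \cap w(\sigma) = \emptyset$, $\mu(\sigma)$ remains a common neighbour of the partner, which is therefore a simplex, and the stability of $w(\sigma)$ keeps $12\ell$ as the smallest unstable vertex, so the partner lies in $C_k^{12\ell}$. The crux is stratum preservation, namely $L(N(\mathrm{partner})) = n$: because $\mu(\sigma)$ is lex-smallest in $N(\sigma)$ its first coordinate realises $\min Co(N(\sigma))$, and since $R \notin \mu(\sigma)$ the vertex $\mu(\sigma)$ survives the shrinking of the neighbourhood, preserving the minimum; a symmetric argument on the lex-largest element of $N(\sigma)$ together with the interior choice of $R$ in $Cover(\mu(\sigma))$ preserves the maximum. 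Finally, the pairing is involutive because $\mu(\sigma) = \mu(\sigma \cup w(\sigma))$: $\mu(\sigma)$ lies in both neighbourhoods, while any $u' < \mu(\sigma)$ in $\B_n^\ell$ is already absent from the larger set $N(\sigma)$.

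For acyclicity I would exploit the same invariance of $\mu$. In any hypothetical cycle $\sigma_1 \prec \tau_1 \succ \sigma_2 \prec \cdots \succ \sigma_m \prec \tau_m \succ \sigma_1$, the containment $\sigma_{i+1} \subsetneq \tau_i$ gives $N(\sigma_{i+1}) \supseteq N(\tau_i)$, so $\mu(\sigma_{i+1}) \leq \mu(\tau_i) = \mu(\sigma_i)$; cycling forces all $\mu(\sigma_i)$ equal, so the element $w := w(\mu(\sigma_i))$ is the same at every step. But then $\sigma_{i+1} = \tau_i \sm \{x_i\}$ with $x_i \neq w$ (otherwise $\sigma_{i+1} = \sigma_i$) leaves $w \in \sigma_{i+1}$, forcing $\sigma_{i+1}$ to be the upper side of its own matched pair rather than the lower side, a contradiction. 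The main obstacle is the stratum-preservation step: ensuring that the extreme coordinates of $Co(N(\sigma))$ are never carried away by those elements of $N(\sigma)$ containing $R$ relies delicately on the interior position of $R$ in $Cover(\mu(\sigma))$ and on the explicit structure of $\B_n^\ell$ inherited from the choice $12\ell$.
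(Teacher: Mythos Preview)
Your approach mirrors the paper's, and you go further by treating involutiveness and acyclicity explicitly (the paper's short proof addresses neither). There is, however, a genuine gap in the involutiveness step, and it also undermines the paper's claim of perfectness. You correctly show $\mu(\sigma \cup w(\sigma)) = \mu(\sigma)$, but the toggle description equally needs $\mu(\sigma \setminus w(\sigma)) = \mu(\sigma)$ when $w(\sigma) \in \sigma$, and this can fail: removing $w(\sigma) = 1\ell R$ from $\sigma$ may insert $R$ into $C_\sigma$, producing a new stable neighbour $u' \ni R$ lexicographically below $\mu(\sigma)$. Concretely, for $k=5$ and $\ell=4$ take $\sigma = \{124,\,146,\,58(10),\,12(11)\} \in C_{5,7}^{124}$: here $C_\sigma=\{3,7,9\}$, $N(\sigma)=\{379\}$, $\mu(\sigma)=379$, $R(379,4)=6$, so $w(\sigma)=146\in\sigma$; but $\tau:=\sigma\setminus 146$ has $C_\tau=\{3,6,7,9\}$, $N(\tau)=\{369,379\}$, $\mu(\tau)=369$, and $w(\tau)=1\cdot 4\cdot R(369,4)=147\neq 146$. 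In the paper's arrow formulation $\sigma$ is then neither a source (since $w(\sigma)\in\sigma$) nor a target (the only candidate predecessor $\tau$ maps to $\tau\cup 147\neq\sigma$), so $\sigma$ is unmatched and $M_{k,n}^\ell$ is not perfect as stated.

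A secondary point: your ``symmetric argument on the lex-largest element of $N(\sigma)$'' for preserving $\max Co(N(\sigma))$ does not work as written, since lexicographic order tracks first coordinates, not last. A cleaner route to stratum preservation is the observation that, because $1,2\notin C_\sigma$, whenever $C_\sigma$ contains a stable $3$-subset both $\min C_\sigma$ and $\max C_\sigma$ lie in some such subset, whence $L(N(\sigma))=\max C_\sigma-\min C_\sigma+1$; since $\min C_\sigma = s_1 < R < s_3 \le \max C_\sigma$, toggling $R$ in $C_\sigma$ leaves both extrema unchanged. This repairs the ``main obstacle'' you flag, but does not by itself rescue the perfectness claim above.
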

\begin{proof}
Let $\sigma \in C_{k,n}^{12\ell}$ and $\B_{n}^\ell=\{u_1<u_2< \ldots <u_{t}\}$. Define 
\begin{equation*}
r_\si=min\{j \in \{1,2,\ldots,t\}\ | \ u_j \in N(\si)\}. 
\end{equation*}

Since $12\ell \in \si$ and $\si \in \N(S_{3,k})$, $N(\si) \subseteq N(12\ell)$. Hence, there exists $u \in \B_{n}^\ell$ such that $u \in N(\sigma)$. Therefore, $r_\si$ exists.

Further, $R(u_{r_\si}, \ell) \in Comp(u_{r_\si},\ell)$ implies that, if $1\ell R(u_{r_\si},\ell) \nin \si$ then $u_{r_\si} \in N(\si \cup 1\ell R(u_{r_\si},\ell)$ and $L(N(\si))= L(N(\si \cup 1\ell R(u_{r_\si},\ell))=n$. Therefore, if $1\ell R(u_{r_\si},\ell)) \nin \si$ then $ \si \cup 1\ell R(u_{r_\si},\ell) \in C_{k,n}^{12\ell}$. 

Consider, $1\ell R(u_{r_\si},\ell) \in \si$. Since $12\ell \in \si$, $C_{\si \sm 1\ell R(u_{r_\si},\ell))}=C_\si \text{ or } C_\si \cup \{R(u_{r_\si},\ell)\}$. Here, $R(u_{r_\si}, \ell) \in Comp(u_{r_\si},\ell)$ and $u_{r_\si} \in N(\si \sm 1\ell R(u_{r_\si},\ell))$. Thus, $L(N(\si \sm 1\ell R(u_{r_\si},\ell)))=L(N(\si))$. Therefore, $\si \sm 1\ell R(u_{r_\si},\ell) \in C_{k,n}^{12\ell}.$ This proves Lemma \ref{lem4.3}. 
\end{proof}

\begin{proof}[Proof of Theorem \ref{thm2}]

From Lemmas \ref{lemma21}, \ref{lem4.2} and \ref{lem4.3}, we have perfect acyclic matchings $M_1$, $M_2$ and $M_3$ on $\varphi_k^{-1}(a_k^1), \ \varphi_k^{-1}(a_k^2)$ and $\varphi_k^{-1}(a_k^3),$ repectively. Using Lemma \ref{lem34} and Proposition \ref{prop42}$(iii)$, $M_1\sqcup M_2\sqcup M_3$ is an acyclic matching on $\mathcal{F}(\N(S_{3,k}))$ with $\varphi_k^{-1}(a_k^4)=\mathcal{F}(\N(SG_{3,k}))$ (Equation \eqref{eqn43})  as the set of critical cells.  Theorem \ref{thm2} now follows from Corollary \ref{rem37}.
\end{proof}

\section{Proof of Theorem \ref{thm3}}
In this section, ${N}(\si)$ will denote the neighborhood of $\si$ in $ KG_{3,k}$ and $\F(X)$ will represent the face poset of the simplicial complex $X$. Let
\begin{equation}\label{eq5.1}
\begin{split}
X_0^{k} & =\N(SG_{3,k}), \ \P_{0}^k=\F(X_0^k), \\
X_1^{k} & =\N(S_{3,k}), \ \P_{1}^k=\F(X_1^k), \\
X_2^{k} & =X_1^k\bigsqcup \{\si \in X_3^k \sm X_1^k \ | \ |C_\si|=4\}, \ \P_{2}^k=\F(X_2^k) \text{ and} \\
X_3^{k} & =\N(KG_{3,k}), \ \P_{3}^k=\F(X_3^k).\\
\end{split}
\end{equation}
It is easy to see that
$X_2^k$ is a subcomplex of $X_3^k$.
Our aim now is to suitably partition $\P_3^k\sm \P_2^k$ and $\P_2^k\sm \P_1^k$ and define acyclic matchings on them. Let $I_i$ and $J_i$ be as defined in Equation \eqref{eq41}. Define 
\begin{equation}\label{eq5.2}
\begin{split}
\P_{i,j}^k & =\{\si \in \P_3^k \sm \P_2^k\ | \ C_\si=\{i,i+1,j\} \} \text{ for } i \in [k+6], j \in I_{i}. \\ 
\Q_{i,j}^k & =\{\si \in \P_2^k \sm \P_1^k\ | \ C_\si=\{i,i+1,j,j+1\} \} \text{ for } i \in [k+4], j \in J_{i}.
\end{split}
\end{equation}

Let $\si \in \P_3^k \sm \P_1^k$. Since $\si \nin \N(S_{3,k})$, we observe that $\si \cap V_{u}^k \neq \emptyset$ and $N(\si)\subset V_{u}^k$. Here, $N(\si)\cap V(SG_{3,k})=\emptyset$ shows that $3 \leq |C_\si| \leq 4$. If $|C_\si|=3$, then there exists $i \in [k+6]$, $j \in I_{i}$ such that $C_\si=\{i,i+1,j\}$, implying that $\si \in \bigsqcup\limits_{i\in [k+6] }\bigsqcup\limits_{j\in I_i} \P_{i,j}^k$. In the case that $|C_\si|=4$, there exists  $i \in [k+4]$, $j \in J_{i}$ such that $C_\si=\{i,i+1,j,j+1\}$. Thus, $\si \in \bigsqcup\limits_{i \in [k+4] } \bigsqcup\limits_{j\in J_i} \Q_{i,j}^k$. Therefore, we get the following partition of $\P_3^k$.

\begin{prop}\label{prop51}
$\P_3^k=\P_1^k \ \bigsqcup \ ( \bigsqcup\limits_{i\in [k+6] }\bigsqcup\limits_{j\in I_i} \P_{i,j}^k)\ \bigsqcup \ ( \bigsqcup\limits_{i \in [k+4] } \bigsqcup\limits_{j\in J_i} \Q_{i,j}^k)$.
\end{prop}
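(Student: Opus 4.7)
The plan is to verify that the right-hand side is a disjoint decomposition of $\P_3^k$. Since $S_{3,k}$ is a spanning subgraph of $KG_{3,k}$ we have $\P_1^k\subseteq\P_3^k$, and by construction $\P_{i,j}^k\subseteq \P_3^k\sm \P_2^k$ while $\Q_{i,j}^k\subseteq \P_2^k\sm\P_1^k$, so the three families lie in mutually disjoint strata of $\P_3^k$. Within each family, an element $\si$ is determined by $C_\si$, so disjointness for distinct indices $(i,j)$ reduces to the uniqueness of the canonical representation of $C_\si$ in terms of $(i,j)$.

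The substantive content is to show that every $\si\in\P_3^k\sm\P_1^k$ satisfies $|C_\si|\in\{3,4\}$. First I would argue that no common neighbor of $\si$ in $KG_{3,k}$ is stable: a stable neighbor $v$ would make every edge $(v,\alpha)$ for $\alpha\in\si$ lie in $S_{3,k}$ by the definition of $E(S_{3,k})$, so $v$ would also be a common neighbor of $\si$ in $S_{3,k}$, giving $\si\in\N(S_{3,k})$, a contradiction. Since $\si\in\N(KG_{3,k})$ supplies at least one common neighbor, this forces $N(\si)\subseteq V_{u}^k$, and in particular $|C_\si|\geq 3$. I would also check $\si\cap V_{u}^k\neq\emptyset$: if every vertex of $\si$ were stable, then an unstable common neighbor $v$ (guaranteed by the previous step) would still witness $\si\in\N(S_{3,k})$ via the other clause in the definition of $E(S_{3,k})$, again a contradiction.

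The main combinatorial obstacle is the upper bound $|C_\si|\leq 4$, equivalent to the fact that every $5$-subset $T$ of the cyclic set $[k+6]$ contains a stable triple, i.e., an independent triple in the cycle graph on $[k+6]$. To prove this I would observe that the induced subgraph on $T$ is a disjoint union of paths whose vertex counts sum to $5$, and then verify by inspection that every partition $(5),(4,1),(3,2),(3,1,1),(2,2,1),(2,1,1,1),(1,1,1,1,1)$ of $5$ yields $\sum \lceil n_i/2\rceil \geq 3$, producing the desired independent triple.

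Once $|C_\si|\in\{3,4\}$ is in hand, the canonical representations follow. For $|C_\si|=3$, $C_\si$ is an unstable triple, so it contains a consecutive pair, giving $C_\si=\{i,i+1,j\}$; a small check confirms that requiring $j\in I_i=[k+6]\sm\{i-1,i,i+1\}$ singles out a unique pair $(i,j)$, even when $C_\si$ consists of three consecutive elements. For $|C_\si|=4$, the absence of a stable triple forces $C_\si$ to be either four consecutive elements or two disjoint consecutive pairs, so $C_\si=\{i,i+1,j,j+1\}$; the restrictions $i\in[k+4]$ and $j\in J_i$ then pick out a canonical starting index (encoding $i<j$ and handling the wrap-around at $k+6$), giving uniqueness. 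Combined with the cross-family disjointness, this establishes the claimed decomposition.
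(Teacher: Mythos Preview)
Your proposal is correct and follows the same route as the paper: show that $\sigma\in\P_3^k\setminus\P_1^k$ forces $\sigma\cap V_u^k\neq\emptyset$ and $N(\sigma)\subseteq V_u^k$, deduce $|C_\sigma|\in\{3,4\}$, and then read off the canonical $(i,j)$. The paper's argument is terser, simply asserting that $N(\sigma)\cap V(SG_{3,k})=\emptyset$ gives $3\le |C_\sigma|\le 4$; your independence-number computation over the partitions of $5$ is exactly the content behind that assertion, and your uniqueness checks for the $(i,j)$ labels make explicit what the paper leaves implicit.
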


Henceforth, each vertex of $V_{u}^k$ is considered as follows: $v=s_1s_2s_3\in V_{u}^k$ implies that $1\leq s_1<s_2<s_3 \leq k+6$. We now construct an acyclic matching on $\bigsqcup\limits_{i\in [k+6] }\bigsqcup\limits_{j\in I_i} \P_{i,j}^k$.

\begin{lem}\label{lem5.1} Let $V_{u}^k=\{v_1<v_2<\ldots <v_m\}$. For $i \in [k+6]$, the following are poset maps:
\begin{enumerate}
  \item[(i)] $ \varphi : \P_3^k \longrightarrow \{a_1  >  a_2 >  a_3 > \ldots > a_{k+6} > a\}$ defined by 
  \[
  \varphi^{-1}(x) = \left\{\def\arraystretch{1.2}%
  \begin{array}{@{}c@{\quad}l@{}}
    \bigsqcup\limits_{j\in I_i} \P_{i,j}^k& \text{if } x=a_{i}, \ i \in [k+6],\\
    \P^k_2& \text{if $x=a$.}
  \end{array}\right.
\]
  
    \item[(ii)] $\varphi_i:\bigsqcup\limits_{j\in I_i} \P_{i,j}^k \longrightarrow \{c_{i+2}  >  c_{i+3} > \ldots > c_{k+6} > c_1>\ldots>c_{i-3}>c_{i-2}\}$ where $\varphi_i^{-1}(c_j)= \P_{i,j}^k $.
    
  \item[(iii)] $\varphi_{i,j}:\P_{i,j}^k \longrightarrow \{b_{v_1}>b_{v_2}> \ldots >b_{v_m}\}$, defined by 
\begin{equation*}
\varphi_{i,j}^{-1}(b_{v_s})=\{\si \in \P_{i,j}^k,  v_s \in \si \ | \ v_t \nin \si\text{ for any } t<s\leq m \}.
\end{equation*}

  \end{enumerate}
\end{lem}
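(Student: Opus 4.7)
The plan is to verify the defining property of a poset map---that $\tau \subseteq \si$ in the domain implies $\varphi(\si) \geq \varphi(\tau)$ in the target---for each of the three maps separately. The recurring observation I will use is that if $\tau \subseteq \si$ as simplices then $\bigcup_{\alpha \in \tau} \alpha \subseteq \bigcup_{\alpha \in \si} \alpha$, so $C_\si \subseteq C_\tau$; and any nonempty $\si \in \N(KG_{3,k})$ has at least one common neighbour $v$ with $v \subseteq C_\si$, forcing $|C_\si| \geq 3$.

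For (i), I split into cases on where $\tau$ sits in the partition of Proposition \ref{prop51}. If $\tau \in \P_2^k$ then $\varphi(\tau) = a$ is the minimum of the target chain and nothing is to prove. Otherwise $\tau \in \P_{i,j}^k$ for some admissible pair $(i,j)$, so $C_\tau = \{i,i+1,j\}$, and the containment $C_\si \subseteq C_\tau$ together with $|C_\si| \geq 3$ forces $C_\si = \{i,i+1,j\}$. I then claim $\si \in \P_{i,j}^k$: if $\si$ were in $\P_1^k$, then $\tau$, being a face of $\si$ in the simplicial complex $\N(S_{3,k})$, would lie in $\P_1^k \subseteq \P_2^k$, contradicting $\tau \in \P_3^k \sm \P_2^k$; and if $\si \in \P_2^k \sm \P_1^k$, then by \eqref{eq5.1} we would have $|C_\si| = 4 \neq 3$. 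Hence $\si \in \P_3^k \sm \P_2^k$ with $C_\si = \{i,i+1,j\}$, i.e.\ $\si \in \P_{i,j}^k$, and $\varphi(\si) = a_i = \varphi(\tau)$.

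For (ii), the same analysis applied to any $\tau \subseteq \si$ inside $\bigsqcup_{j \in I_i} \P_{i,j}^k$ shows both simplices lie in the \emph{same} $\P_{i,j}^k$, since the value of $j$ is determined by $C_\si = \{i,i+1,j\}$ and the $\P_{i,j}^k$ are pairwise disjoint for distinct $j$. Hence $\varphi_i(\si) = c_j = \varphi_i(\tau)$, so $\varphi_i$ is order-preserving regardless of the specific total order chosen on the $c_j$. For (iii), I first note that every $\si \in \P_{i,j}^k$ satisfies $\si \cap V_{u}^k \neq \emptyset$: if every vertex of $\si$ were stable then for any $\alpha \in \si$ the pair $(\{i,i+1,j\},\alpha)$ would lie in $E(S_{3,k})$ (one endpoint is stable), giving $\{i,i+1,j\} \in N(\si)$ in $S_{3,k}$ and $\si \in \N(S_{3,k}) \subseteq \P_2^k$, contradicting $\si \in \P_3^k \sm \P_2^k$. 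So $\varphi_{i,j}$ is well defined. If $\tau \subseteq \si$ and $\varphi_{i,j}(\tau) = b_{v_s}$, then $v_s \in \tau \subseteq \si$, so the smallest-indexed unstable vertex in $\si$ is some $v_{s'}$ with $s' \leq s$, and $\varphi_{i,j}(\si) = b_{v_{s'}} \geq b_{v_s} = \varphi_{i,j}(\tau)$.

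There is no deep obstacle here; the care required is tracking the direction $C_\si \subseteq C_\tau$ (easy to invert) and ruling out that $\si$ escapes into $\P_2^k$ when $\tau \in \P_{i,j}^k$. The step most easily missed is the all-stable exclusion used in (iii), which crucially exploits the definition $E(S_{3,k}) = \{(u,v) \in E(KG_{3,k}) \ | \ u \text{ or } v \text{ is stable}\}$, and I would be explicit in invoking it there.
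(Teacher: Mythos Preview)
Your proof is correct and follows essentially the same approach as the paper's: reduce everything to the containment $C_\si \subseteq C_\tau$ and, for (iii), the observation that the least unstable vertex of $\si$ has index no larger than that of $\tau$. In fact you are more careful than the paper in two places---in (i) you explicitly rule out $\si \in \P_2^k$ (the paper simply asserts $\si \in \P_{i,j}^k$ from $C_\si = \{i,i+1,j\}$), and in (iii) you unwind the reason why $\si \cap V_u^k \neq \emptyset$ using the definition of $E(S_{3,k})$, whereas the paper invokes the earlier remark preceding Proposition~\ref{prop51}.
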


\begin{proof}
\begin{enumerate}
\item[$(i)$] Let $\tau \subseteq \si$. If $\tau \in \bigsqcup\limits_{j\in I_i} \P_{i,j}^k$ then $C_\tau =\{i,i+1,j\}$. Here, $ C_\si \subseteq C_\tau$ implies that $C_\si =\{i,i+1,j\}$. Therefore, $ \si \in \bigsqcup\limits_{j\in I_i} \P_{i,j}^k$, proving $(i)$.

\item[$(ii)$] Let $\tau \subseteq \si$ and $\tau \in \P_{i,j}^k $. Here, $ C_\tau=\{i,i+1,j\}$. Therefore, $C_\si =\{i,i+1,j\}$, {\it i.e.}, $ \si \in \P_{i,j}^k$ and $\varphi_i(\si) = \varphi_i(\tau)$.

\item[$(iii)$]For all $\si \in \P_{i,j}^k$, $\si \nin \N(S_{3,k})$, {\it i.e.}, $\si \cap V_{u}^k\neq \emptyset$. Therefore, $\varphi_{i,j}$ is well defined.

Let, $\tau \subseteq \si$ and $\tau \in \varphi_{i,j}^{-1}(b_{v_s})$. Here, $\tau \in \varphi_{i,j}^{-1}(b_{v_s})$ implies that $v_s \in \tau$ and $v_t \nin \tau$ for any $t<s$. Further, $\tau \subseteq \si$ implies that $ v_s\in \si.$ Therefore, $\si \in \varphi_{i,j}^{-1}(b_{v_s})$ or $\si \in \varphi_{i,j}^{-1}(b_{v_t})$ for some $v_t \in V_{u}^k, \text{ with } t<s$, implying that $\varphi_{i,j}$ is a poset map.
\end{enumerate}
\end{proof}

From Lemmas \ref{lem34} and \ref{lem5.1}$(iii)$, to construct an acyclic partial matching on $\P_{i,j}^k$, it is sufficient to construct one on $\varphi_{i,j}^{-1}(b_{v_s})$ for each $v_s \in V_{u}^k$. Therefore, using Proposition \ref{prop2.1}$(ii)$, it is sufficient to define an acyclic matching on $\varphi_{1,j}^{-1}(b_{v_s})$ for each $j \in I_1=[k+5]\sm \{1,2\}$.

\subsection{Acyclic matching $M_{1,j,v_s}^k$ on $\varphi_{1,j}^{-1}(b_{v_s})$ for $j \in I_1$.}
To construct an acyclic partial matching on $\P_{1,j}^k$, from Lemma \ref{lem5.1}$(iii)$, it is sufficient to do so on $\varphi_{1,j}^{-1}(b_{v_s})$. For this, define
\begin{equation}\label{eq5.5}
\begin{split}
NC_j & =\big{\{}q(s-1)s\in V_{u}^k \ | \  q \in [k+4]\sm\{1,2,3,j,j+1\},  \\
& \hspace*{5.2cm} s\in [k+6]\sm \{[q+1]\cup \{j,j+1\}\}\big{\}}\text{ and} \\ C_j & =\{v \in V_{u}^k\sm NC_j \ | \ v \cap \{1,2,j\} = \emptyset \}.
\end{split}
\end{equation}

Let $v_s=s_1s_2s_3 \in V_{u}^k$ with $1\leq s_1<s_2<s_3\leq k+6$. Clearly, $\{s_1,s_2,s_3\} \cap\{1,2,j\}\neq \emptyset$ implies that $\varphi_{1,j}^{-1}(b_{v_s})= \emptyset$. Further, if $s_3=s_2+1$ and $s_1 \nin \{1,2,3,j,j+1\}$, then $v_s \in NC_j$. Therefore, if $v_s\in C_j$ and $s_1 \nin \{3,j+1\}$, then $s_3 \neq s_2+1$. Now, define

\begin{equation}\label{eq5.6}
  W_{v_s}^j = 
  \begin{cases*}
    \big{\{}t_1s_2s_3 \ | \ t_1 \in [k+6]\sm \{[s_1] \cup \{j,s_2,s_3\}\}\big{\}} & if $v_s \in NC_j$, \\ 
    \hspace*{0.5cm}\bigsqcup\{t_2s_1s_3\ | \ t_2 \in [s_1-2]\sm \{1,2,j\}\} &  \vspace{0.3cm}\\   
   \big{\{}ts_2(s_2+1) \ | \ t \in [k+6]\sm \{1,2,3,j,s_2,s_2+1\}\big{\}} & if $v_s\in C_j, j\neq 3$,\\
   & $v_s=3s_2(s_2+1)$, \vspace{0.3cm}\\
   \big{\{}t_1s_2(s_2+1) \ | \ t_1 \in [k+6]\sm \{[j+1]\cup\{s_2,s_2+1\}\}\big{\}} & if $v_s\in C_j, v_s=$ \\
  \hspace*{0.5cm} \bigsqcup \{t_2(j+1)(s_2+1) \ | \ t_2\in [j-1] \sm \{1,2\}\} 
  & $(j+1)s_2(s_2+1)$, \vspace{0.3cm}\\
    \{ts_2s_3 \ | \ t \in [k+6]\sm \{1,2,j,s_1,s_2,s_3\}\}  & if $v_s\in C_j$,\\    
    & $s_3 \neq s_2+1$, \vspace{0.3cm} \\    
    \emptyset & otherwise.
  \end{cases*}
\end{equation}

If $(\Sigma)_v$ is as defined in Equation \eqref{eq3.2}, for simplicity, we denote 
\begin{enumerate}
\item $\varphi_{1,j}^{-1}(b_{v_s})$ by $\Sigma^{v_s,j}$, $\Sigma^{v_s,j}\sm (\Sigma^{v_s,j})_{v_1}$ by $\Sigma^{v_s,j}_{\{v_1\}}$ and 
\item $ \Sigma^{v_s,j}_{\{v_1,v_2,\ldots, v_{e-1}\}}\sm(\Sigma^{v_s,j}_{\{v_1,v_2,\ldots, v_{e-1}\}})_{v_e}$ by $\Sigma^{v_s,j}_{\{v_1,v_2,\ldots, v_e\}}$, where $v_1<v_2<\ldots<v_e$. 
\end{enumerate}

If $v_t<v_s$, then from Lemma \ref{lem5.1}$(iii)$, $\si\cup v_t , \ \si \sm v_t \nin \Sigma^{v_s,j}$. Therefore, $(\Sigma^{v_s,j})_{v_t}=\emptyset$ and $(Q)_{v_t}=\emptyset$, for any $Q\subseteq \Sigma^{v_s,j}$.
To construct an acyclic matching on $\varphi_{1,j}^{-1}(b_{v_s})$ for each $j \in I_1=[k+5]\sm\{1,2\}$, we prove some preliminary results.
\begin{prop}\label{prop5.3} 
Let $v_s=s_1s_2s_3 \in V_{u}^k$ and $1\leq s_1<s_2<s_3 \leq k+6$.
 \begin{enumerate}
\item[$(a)$] If $v_t \in W_{v_s}^j \cap V_{u}^k$, then $v_s<v_t$.

\item[$(b)$] $\Sigma^{v_s,j}_{W_{v_s}^j}$ has exactly one $k$ cell if $v_s \in C_j$ and is empty otherwise.

\item[$(c)$] Let $W_{v_s}^j=\{w_1^j,\ldots,w_{p_j}^j\}$, where $w_1^j <\ldots < w_{p_j}^j$ in the lexicographic ordering of triples. The map $\psi_j: \Sigma^{v_s,j} \longrightarrow \{b_1>b_2>\ldots >b_{p_j}>b\}$ defined below is a poset map.
\begin{equation*}
\psi_j^{-1}(x) = 
\begin{cases*} 
    (\Sigma^{v_s,j}_{\{w^j_1,\ldots,w^j_{i-1}\}})_{w^j_i} & if $x=b_i, \ i \in [p_j]$,\\
    \Sigma^{v_s,j}_{W_{v_s}^j}& if $x=b$,\\
\end{cases*}
\end{equation*}
\end{enumerate}
\end{prop}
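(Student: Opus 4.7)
My plan treats the three parts of Proposition~\ref{prop5.3} in turn, each by case analysis driven by the five cases in the definition \eqref{eq5.6} of $W_{v_s}^j$.

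For part $(a)$, fix $v_s = s_1 s_2 s_3$ and $v_t \in W_{v_s}^j \cap V_{u}^k$, and check each case of \eqref{eq5.6}. In every case, $W_{v_s}^j$ is a disjoint union of at most two ``replacement families'' obtained by substituting a single coordinate of $v_s$. The ``upper'' families --- the $t_1 s_2 s_3$ part of the $NC_j$ case, the $ts_2(s_2+1)$ set in the $v_s = 3 s_2(s_2+1)$ case, the $t_1 s_2(s_2+1)$ part in the $v_s = (j+1)s_2(s_2+1)$ case, and the $t > s_1$ portion of the last case --- consist of triples whose smallest sorted coordinate strictly exceeds $s_1$, so $v_t > v_s$ lexicographically; moreover the triple inherits an existing consecutive pair from $v_s$, so $v_t$ is automatically in $V_u^k$. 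The ``lower'' families --- the $t_2 s_1 s_3$ part of $NC_j$, the $t_2 (j+1)(s_2+1)$ part, and the $t < s_1$ portion of the last case --- would give $v_t < v_s$, but the gap conditions $t_2 \nin \{1, 2\}$, $s \ge q + 2$, and $s_3 \neq s_2 + 1$ combine to make every pair in $v_t$ non-consecutive modulo $k+6$, so $v_t$ is stable and excluded from $V_{u}^k$.

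For part $(b)$, I first identify the $k$-cells of $\Sigma^{v_s, j}$: since $C_\sigma = \{1, 2, j\}$ forces the unique possible common neighbor of $\sigma$ in $KG_{3, k}$ to be $\{1, 2, j\}$ itself (unstable, since $(1, 2)$ is consecutive), a simplex $\sigma \in \P_{1, j}^k$ is a subset of $N(\{1, 2, j\})$ whose constituent triples cover $[k + 6] \sm \{1, 2, j\}$, and a $k$-cell is such a simplex of exactly $k + 1$ vertices. Unpacking the iterated definition, $\sigma \in \Sigma^{v_s, j}_{W_{v_s}^j}$ precisely when, at every stage $i \le p_j$, at least one of $\sigma \cup w_i^j$ or $\sigma \sm w_i^j$ has already dropped out of $\Sigma^{v_s, j}_{\{w_1^j, \ldots, w_{i-1}^j\}}$. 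When $v_s \in C_j$, I exhibit one distinguished $k$-cell --- the simplex naturally built from $v_s$ together with the triples needed to cover $[k + 6] \sm \{1, 2, j\}$ --- for which removal of any $w_i^j$ destroys $C_\sigma = \{1, 2, j\}$, guaranteeing it survives every round; uniqueness follows from a round-by-round argument showing that every other candidate $k$-cell is matched at some stage. When $v_s \nin C_j$, either $v_s \cap \{1, 2, j\} \neq \emptyset$ makes $\Sigma^{v_s, j}$ empty, or the ``otherwise'' clause of \eqref{eq5.6} combined with the $NC_j$ structure yields a $W_{v_s}^j$ rich enough to match off every $k$-cell.

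For part $(c)$, I verify that $\psi_j$ is a poset map by induction on $i$. Given $\tau \subseteq \sigma$ with $\tau \in \psi_j^{-1}(b_i) = (\Sigma^{v_s, j}_{\{w_1^j, \ldots, w_{i-1}^j\}})_{w_i^j}$, either $\sigma$ has already been paired and removed at some earlier stage $l < i$, so $\psi_j(\sigma) = b_l$ with $l < i$ (hence $\psi_j(\sigma) > \psi_j(\tau)$ in the target order $b_1 > b_2 > \ldots$); or $\sigma$ still lies in $\Sigma^{v_s, j}_{\{w_1^j, \ldots, w_{i-1}^j\}}$, and then the inclusions $\tau \cup w_i^j \subseteq \sigma \cup w_i^j$ and $\tau \sm w_i^j \subseteq \sigma \sm w_i^j$, combined with a short lemma asserting that each surviving subposet is closed under adding and removing coordinates outside $\{w_1^j, \ldots, w_{i-1}^j\}$, give $\sigma \in \psi_j^{-1}(b_i)$. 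The main obstacle is the uniqueness clause in part $(b)$, since the surviving $k$-cell depends sensitively on the lexicographic ordering of the $w_i^j$'s and the matching outcome at each stage cascades into the next; parts $(a)$ and $(c)$ are essentially bookkeeping once \eqref{eq5.6} is unpacked.
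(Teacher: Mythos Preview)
Your treatment of part $(a)$ matches the paper's case analysis and is correct. The substantive gap is in parts $(b)$ and $(c)$, and it traces to one missing lemma that the paper isolates as Claim~\ref{claim5.5}:
\[
\Sigma^{v_s,j}_{\{w_1^j,\ldots,w_i^j\}}=\bigl\{\sigma\in\Sigma^{v_s,j}_{\{w_1^j,\ldots,w_{i-1}^j\}}\ :\ C_{\sigma\setminus w_i^j}\neq\{1,2,j\}\bigr\}.
\]
Iterating gives the order-independent description $\Sigma^{v_s,j}_{W_{v_s}^j}=\{\sigma\in\Sigma^{v_s,j}: C_{\sigma\setminus w_i^j}\neq\{1,2,j\}\text{ for every }i\}$, which dissolves exactly the ``cascading'' obstacle you flag in $(b)$: the survivors do not depend on the lexicographic order at all, and the paper simply writes down the unique surviving simplex in each $C_j$ subcase. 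The nontrivial half of Claim~\ref{claim5.5} is that if $\sigma$ survives to stage $i$ and $w_i^j\notin\sigma$, then $\sigma\cup w_i^j$ also survives; this uses the specific structure of $W_{v_s}^j$ (each $w_l^j$ shares exactly two coordinates with $v_s$, and the remaining coordinates $s_{i_1}^l$ are pairwise distinct), so adjoining $w_i^j$ cannot re-cover the single element uncovered by deleting $w_l^j$.

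Two concrete issues in your plan for $(b)$: you only track $k$-cells, but the statement asserts $\Sigma^{v_s,j}_{W_{v_s}^j}$ is a singleton or empty, so every lower-dimensional cell must be matched as well; and your ``round-by-round'' uniqueness argument is precisely where Claim~\ref{claim5.5} does the work you have not supplied. For $(c)$, your ``short lemma'' about closure under adding/removing $w_i^j$ is false as stated without the structural input above (adding $w_i^j$ can re-cover an element and push $\sigma\cup w_i^j$ out of the surviving set). The paper sidesteps this by arguing $(c)$ via contradiction directly from Claim~\ref{claim5.5}: if $\tau\subseteq\sigma$ with $\psi_j(\tau)=b_{t_1}>\psi_j(\sigma)=b_{t_2}$, then $C_{\sigma\setminus w_{t_1}^j}\neq\{1,2,j\}$ (since $\sigma$ survived stage $t_1$) while $C_{\tau\setminus w_{t_1}^j}=\{1,2,j\}$, and $\tau\subseteq\sigma$ forces $C_{\sigma\setminus w_{t_1}^j}\subseteq C_{\tau\setminus w_{t_1}^j}$, a contradiction.
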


\begin{proof}
If $v_s \nin NC_j\cup C_j$, then $W_{v_s}^j=\emptyset$ (Equation \eqref{eq5.6}). Consider $v_s \in NC_j\cup C_j$.
\begin{enumerate}
\item[$(a)$] First, let $v_s \in NC_j$, {\it i.e.}, $s_3=s_2+1$ and $s_1 \in [k+4]\sm \{1,2,3,j,j+1\}$.

\begin{claim}\label{claim5.4}
$v_t \in W_{v_s}^j \cap V_{u}^k \rar v_t=t_1s_2(s_2+1)$ where $ t_1 \in [k+6]\sm \{[s_1] \cup \{j,s_2,s_3\}\}$.
\end{claim}
Since, $v_t \in W_{v_s}^j$, it satisfies the first condition of Equation \eqref{eq5.6}. Suppose $v_t=t_2s_1s_3$. From Equation \eqref{eq5.6}, $t_2 \in [s_1-2]\sm \{1,2,j\}$. Here, $3\leq t_2 < s_1-1 <s_2$, {\it i.e.}, $s_1-t_2>1$ and $s_3-t_2>1$. Further, $s_1<s_2$ implies that $s_3-s_1 >1$. Thus, $v_t \in V(SG_{3,k})$, an impossibility. Claim \ref{claim5.4} is thus proved. 

From Claim \ref{claim5.4} and Equation \eqref{eq5.6}, $t_1 \nin [s_1]$ implies that $s_1<t_1$, {\it i.e.} $v_s<v_t$.
Now, let $v_s \in C_j$. Consider the case when $s_3=s_2+1$. 

If $v_s=3s_2(s_2+1)$, {\it i.e.}, $j\neq 3$, then from Equation \eqref{eq5.6}, $v_t \in W_{v_s}^j$. Thus, $v_t=t_1s_2(s_2+1)$, $t_1 >3$ which implies that, $v_s<v_t$.

If $v_s=(j+1)s_2(s_2+1)$, then $v_t \in W_{v_s}^j$ implies that, either $v_t=t_1s_2(s_2+1)$, with $t_1 >j+1$ or $v_t=t_2(j+1)(s_2+1)$, with $t_2<j-1$. 

If $v_t=t_2(j+1)(s_2+1)$ with $t_2<j-1$, then $(s_2+1)-t_2>1$ and $t_2-(j+1)>1$, thereby implying that $v_t \in V(SG_{3,k})$, a contradiction. Therefore, $v_t=t_1s_2(s_2+1)$ with $t_1 >j+1$. Since $t_1 >j+1$, $v_s<v_t$.

Now, consider $v_s \in C_j$, where $s_3\neq s_2+1$. Here, $s_2=s_1+1$. Since $v_t \in W_{v_s}^j$, from Equation \eqref{eq5.6}, $v_t=t_1s_2s_3$ where $t_1 \in [k+6] \sm \{1,2,j,s_1,s_2,s_3\}$. If $t_1<s_1$, then $s_2-t_1>1$, {\it i.e.}, $v_t \in V(SG_{3,k})$ (since $s_3-s_2>1$ and $s_1<s_2<s_3$). Therefore, $s_1<t_1$ and hence $v_s<v_t$.

\item[$(b)$] Let $W_{v_s}^j=\{w^j_1,\ldots,w^j_{p_j}\}\subseteq V_{u}^k$ have the lexicographic ordering. By Equation \eqref{eq5.6}, $w_i^j \cap \{1,2,j\}=\emptyset, \ \forall \ i \in [p_j].$ 

\begin{claim}\label{claim5.5}
\begin{enumerate}
\item[$(i)$] $\Sigma^{v_s,j}_{\{w_1^j\}}=\{\si \in \Sigma^{v_s,j} \ | \ C_{\si \sm w_1^j}\neq \{1,2,j\}\}$.
\item[$(ii)$] $\Sigma^{v_s,j}_{\{w_1^j,\ldots,w_i^j\}}=\{\si \in \Sigma^{v_s,j}_{\{w_1^j,\ldots,w_{i-1}^j\}} \ | \ C_{\si \sm w_i^j}\neq \{1,2,j\}\}, \ i \in [p_j]$.
\end{enumerate}
\end{claim}
Let $\si \in \Sigma^{v_s,j}=\varphi_{1,j}^{-1}(b_{v_s})$. From Lemma \ref{lem5.1}$(iii)$, $\si \in \P_{1,j}^k \ {\it i.e.}, C_\si =\{1,2,j\}$.

First, let $w_1^j \in \si$. If $C_{\si \sm w_1^j}=\{1,2,j\}$, then  $\si, \si \sm w_1^j \in \Sigma^{v_s,j}$ (from Lemma \ref{lem5.1}$(iii)$), {\it i.e.}, $\si \in (\Sigma^{v_s,j})_{w_1^j}$. Here, $w_1^j \in \si$ and $C_{\si \sm w_1^j}\neq \{1,2,j\} $ imply that $ \si \in \Sigma^{v_s,j}_{\{w_1^j\}}.$

If $w_1^j \nin \si$, then $C_{\si \cup w_1^j}=\{1,2,j\}$ (since $w_1^j \cap \{1,2,j\}=\emptyset$). From Proposition \ref{prop5.3}$(a)$, $v_s<w_1^j$. Therefore, $\si, \si\cup w_1^j \in  \Sigma^{v_s,j}$, {\it i.e.}, $\si \in (\Sigma^{v_s,j})_{w_1^j}$. This proves Claim \ref{claim5.5}$(i)$.  

Claim \ref{claim5.5}$(ii)$ is proved by induction. The base case, {\it i.e.,} $i=1$, follows from Claim \ref{claim5.5}$(i)$. Suppose that Claim \ref{claim5.5}$(ii)$ is true for $i=t\in [p_j-1]$. 

Let $i=t+1$ and $\si \in \Sigma^{v_s,j}_{\{w_1^j,\ldots,w_{t}^j\}}$, {\it i.e.}, $C_{\si \sm w_\ell^j} \neq C_\si$ for all $\ell \in [t]$. From Lemma \ref{lem5.1}$(iii)$, $\si \in \P_{1,j}^k, {\it i.e.}, C_\si =\{1,2,j\}$. 

Consider $w_{t+1}^j \in \si$ and $\ell \in [t]$. Since $C_{\si \sm w_\ell^j}\neq \{1,2,j\}$, $C_{\{\si\sm w_{t+1}^j\}\sm w_\ell^j} \neq \{1,2,j\}$. If $C_{\si \sm w_{t+1}^j}=\{1,2,j\}$, then  $\si, \si \sm w_{t+1}^j \in \Sigma^{v_s,j}_{\{w_1^j,\ldots,w_{t}^j\}}$.  Thus, $\si \in (\Sigma^{v_s,j}_{\{w_1^j,\ldots,w_{t}^j\}})_{w_{t+1}^j}$. Further, if $C_{\si \sm w_{t+1}^j}\neq \{1,2,j\}$, then $\si \sm  w_{t+1}^j\nin \P_{1,j}^k$ implying that $\si \nin (\Sigma^{v_s,j}_{\{w_1^j,\ldots,w_{t}^j\}})_{w_{t+1}^j}$, {\it i.e.}, $\si \in \Sigma^{v_s,j}_{\{w_1^j,\ldots,w_{t+1}^j\}}.$

Now, consider the case $w_{t+1}^j \nin \si$. For $\ell \in [t+1]$, let $w_\ell^j = s_{i_1}^\ell s_{i_2}^\ell s_{i_3}^\ell$. From Equation \eqref{eq5.6}, $\# (\{s_{i_1}^\ell ,s_{i_2}^\ell, s_{i_3}^\ell \}\cap \{s_1,s_2,s_3\}) =2$. Without loss of generality, let $s_{i_1}^\ell \nin \{s_1,s_2,s_3\}$, $\forall \ \ell \in [t+1]$. Since $\si \in \Sigma^{v_s,j}_{\{w_1^j,\ldots,w_{t}^j\}}, C_\si=\{1,2,j\}, C_{\si \sm w_\ell^j} \neq \{1,2,j\}$ and $v_s \in \si$, we get $C_{\si \sm w_\ell^j}= \{1,2,j,s_{i_1}^{\ell}\}$ for all $\ell \in [t]$. From Equation \eqref{eq5.6}, $s_{i_1}^{\ell} \neq s_{i_1}^{\ell^\prime}$, $\forall \ \ell \neq \ell^\prime$. Therefore, $C_{\{\si \cup w_{t+1}^j\}\sm w_\ell^j}= \{1,2,j,s_{i_1}^{\ell}\}$. Thus, $\si, \si\cup w_{t+1}^j \in  \Sigma^{v_s,j}_{\{w_1^j,\ldots,w_{t}^j\}}$, {\it i.e.}, $\si \in (\Sigma^{v_s,j}_{\{w_1^j,\ldots,w_{t}^j\}})_{w_{t+1}^j}$. This proves Claim \ref{claim5.5}$(ii)$.

To prove Propostition \ref{prop5.3}$(b)$, first let $v_s=s_1s_2s_3 \in NC_j$, {\it i.e.}, $s_1\neq 1,2,3,j,j+1$ and $s_3=s_2+1$. From Claim \ref{claim5.5}$(ii)$ and Equation \eqref{eq5.6}, 
\begin{equation*}
\begin{split}
 \Sigma^{v_s,j}_{W_{v_s}^j} & = \big{\{}\si \in \Sigma^{v_s,j} \ | \ C_{\si \sm \{t_1s_2s_3\}}  =\{1,2,j,t_1\}, \ \forall \ t_1 \in [k+6] \sm \{[s_1] \cup \{j,s_2,s_3\}\}, \\
& C_{\si \sm \{t_2s_1s_3\}}=\{1,2,j,t_2\}, \ \forall \ t_2 \in [s_1-2]\sm \{1,2,j\}\big{\}} .
 \end{split}
 \end{equation*}
Let $\si \in \Sigma^{v_s,j}_{W_{v_s}^j}$. Here, $\si \in \varphi_{1,j}^{-1}(b_{v_s})$. From Lemma \ref{lem5.1}$(iii)$, $x_1(x_1+1)x_3, \ x_1x_2(x_2+1)\nin \si,$ $\forall \ x_1 <s_1$. Further, from Equation \eqref{eq5.6}, $(s_1-1)s_2(s_2+1), (s_1-1)s_1(s_2+1) \nin W_{v_s}^3$. Therefore, $(s_1-1)s_2(s_2+1), (s_1-1)s_1(s_2+1) \nin  \si$ implying that $s_1-1 \nin S_\si$, {\it i.e.}, $s_1-1 \in C_\si$. Further, $s_1-1 \nin \{1,2,j\}$ implies that $\si \nin \varphi_{1,j}^{-1}(b_{v_s})$, a contradiction. Therefore, if $v_s \in NC_j$, then $\Sigma^{v_s,j}_{W_{v_s}^j}=\emptyset.$
In the case, $v_s \in C_j$, there are three possibilities for $v_s$.  

If $v_s=3s_2(s_2+1)$, from Claim \ref{claim5.5}$(ii)$ and Equation \eqref{eq5.6} we get, 
\begin{equation*}
\begin{split}
\Sigma^{3s_2(s_2+1),j}_{W_{3s_2(s_2+1)}^j} & = \big{\{}\si \in \Sigma^{3s_2(s_2+1),j} \ | \ C_{\si \sm \{ts_2(s_2+1)\}}=\{1,2,j,t\}, \ \forall \ t \neq 1,2,3,j,s_2,s_2+1\big{\}} \\
& =\big{\{}\{ts_2(s_2+1)\ | \ t \in [k+6] \sm \{1,2,j,s_2,s_2+1\}\}\big{\}}.
\end{split}
\end{equation*} 

If $v_s=s_1(s_1+1)s_3$ with $s_3>s_1+2$ then,
\begin{equation*}
\begin{split}
\Sigma^{v_s,j}_{W_{v_s}^j} & = \big{\{}\si \in \Sigma^{v_s,j} \ | \ C_{\si \sm \{t(s_1+1)s_3\}} =\{1,2,j,t\}, \ \forall \ t \neq 1,2,j,s_1,s_2,s_3\big{\}}\\
 & =\big{\{}\{t(s_1+1)s_3 \ | \ t \in [k+6]\sm \{1,2,j,s_2,s_3\} \}\big{\}}.
 \end{split}
\end{equation*} 

Now, consider $v_s=(j+1)s_2(s_2+1)$. From Claim \ref{claim5.5}$(ii)$ and Equation \eqref{eq5.6},

\begin{equation*}
\begin{split}
 \Sigma^{(j+1)s_2(s_2+1),j}_{W_{(j+1)s_2(s_2+1)}^j} & = \Big{\{}\si \in \Sigma^{(j+1)s_2(s_2+1),j} \ | \ C_{\si \sm \{t_1s_2(s_2+1)\}}  =\{1,2,j,t_1\} \text{ and }  \\
 & \hspace*{0.7cm} C_{\si \sm \{t_2(j+1)(s_2+1)\}}=\{1,2,j,t_2\}, \ \forall \ t_1 \nin [j+1] \cup \{s_2,s_2+1\},\\
 & \hspace*{0.8cm} t_2 \in [j-1]\sm \{1,2\}\Big{\}} \\
 & =\Big{\{\{}3(j+1)(s_2+1),\ldots,(j-1)(j+1)(s_2+1),(j+1)s_2(s_2+1), \\ 
& \hspace*{0.7cm} (j+2)s_2(s_2+1),\ldots,(s_2-1)s_2(s_2+1),s_2(s_2+1)(s_2+2),\\
& \hspace*{0.7cm} \ldots,s_2(s_2+1)(k+5),s_2(s_2+1)(k+6)\}\Big{\}}.
 \end{split}
 \end{equation*}
Therefore, if $v_s \in C_j$, then $ \Sigma^{v_s,j}_{W_{v_s}^j}$ contains exactly one cell of dimension $k$.

\item[$(c)$] Let $\tau , \si \in \Sigma^{v_s,j}$ such that $\tau \subseteq \si$. Assume that $\tau \in \psi_j^{-1}(b_{t_1}),  \ \si \in \psi_j^{-1}(b_{t_2})$ and $t_2>t_1$. From Claim \ref{claim5.5}$(ii)$, $\si \in (\Sigma^{v_s,j}_{\{w^j_1,w^j_2,\ldots, w^j_{t_2-1}\}})_{w^j_{t_2}}$ implies that $C_{\si \sm w^j_t}\neq \{1,2,j\}$ for all $t<t_2$. In particular, $C_{\si \sm w^j_{t_1}}\neq \{1,2,j\}$. 

If $\tau \in (\Sigma^{v_s,j}_{\{w^j_1,w^j_2,\ldots, w^j_{t_1-1}\}})_{w^j_{t_1}}$, then from Claim \ref{claim5.5}$(ii)$, $C_{\tau \sm w^j_{t_1}}=C_\tau=\{1,2,j\}$. But $C_\si \subseteq C_\tau$ implies that $C_{\si \sm w^j_{t_1}} \subseteq C_{\tau \sm w^j_{t_1}}=C_\tau$, {\it i.e.}, $C_{\si \sm w^j_{t_1}}=\{1,2,j\}$, a contradiction. This completes the proof of Proposition \ref{prop5.3}. 
\end{enumerate}
\end{proof}

\begin{lem}\label{lem5.3} 
There exists an acyclic matching on $\P_{1,j}^k$, $j \in \{3,\ldots,k+5\}$, with exactly $\frac{(k+1)(k+2)}{2}$ critical cells of dimension $k$.
\end{lem}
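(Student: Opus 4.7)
The plan is to apply the Cluster Lemma twice, reduce to element matchings, and then count the surviving critical cells.

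First, I would invoke Lemma \ref{lem34} with the poset map $\varphi_{1,j}$ of Lemma \ref{lem5.1}(iii) to reduce the construction of an acyclic matching on $\P_{1,j}^k$ to the construction of one on each fiber $\Sigma^{v_s,j} = \varphi_{1,j}^{-1}(b_{v_s})$. Since $\Sigma^{v_s,j}$ is empty whenever $v_s \cap \{1,2,j\} \neq \emptyset$, only those $v_s \in V_{u}^k$ disjoint from $\{1,2,j\}$ need be considered.

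For each such $v_s$, a second application of the Cluster Lemma, this time to $\psi_j$ from Proposition \ref{prop5.3}(c), decomposes $\Sigma^{v_s,j}$ into the subposets $\psi_j^{-1}(b_i) = (\Sigma^{v_s,j}_{\{w^j_1,\ldots,w^j_{i-1}\}})_{w^j_i}$ for $i \in [p_j]$ together with the residual piece $\psi_j^{-1}(b) = \Sigma^{v_s,j}_{W_{v_s}^j}$. Lemma \ref{pac} equips each $\psi_j^{-1}(b_i)$ with the perfect acyclic element matching $M(\psi_j^{-1}(b_i))_{w^j_i}$, contributing no critical cells. By Proposition \ref{prop5.3}(b), $\psi_j^{-1}(b)$ is empty when $v_s \notin C_j$ and is a singleton consisting of a single $k$-cell when $v_s \in C_j$; in the latter case I would leave this unique cell unmatched, producing one critical $k$-cell per element of $C_j$.

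Assembling all these pieces via Lemmas \ref{lem33} and \ref{lem34} yields an acyclic matching on $\P_{1,j}^k$ whose critical cells are in bijection with $C_j$ and all lie in dimension $k$. It then remains to verify $|C_j| = \frac{(k+1)(k+2)}{2}$. Following the three nontrivial branches of Equation \eqref{eq5.6}, $C_j$ partitions into unstable triples of the three shapes $\{3, s_2, s_2+1\}$ (when $j \neq 3$), $\{j+1, s_2, s_2+1\}$, and $\{s_1, s_1+1, s_3\}$ with $s_3 > s_1+2$, each subject to disjointness from $\{1,2,j\}$. A direct enumeration in each branch, keeping track of the excluded values $s_1, s_2, s_3 \in \{1,2,j\}$, shows the three totals sum to $\frac{(k+1)(k+2)}{2}$ independently of $j$. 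The main obstacle I anticipate is precisely this $j$-independence: the individual branch sizes vary considerably with $j$, and the boundary values $j = 3$, $j = k+4$, and $j = k+5$ require separate attention because certain branches of Equation \eqref{eq5.6} collapse there, forcing a careful check that the cancellations across branches always produce the same total.
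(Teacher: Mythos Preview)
Your proposal is correct and follows essentially the same route as the paper: apply the Cluster Lemma first via $\varphi_{1,j}$ and then via $\psi_j$, use the perfect element matchings on the fibres $\psi_j^{-1}(b_i)$, and invoke Proposition~\ref{prop5.3}(b) to see that the only unmatched cells are the single $k$-cells indexed by $C_j$. The paper's write-up simply asserts $\#C_j=\tfrac{(k+1)(k+2)}{2}$ without the case analysis you outline; your plan to verify this by enumerating the three branches of Equation~\eqref{eq5.6} (with attention to the boundary values of $j$) is exactly the computation the paper suppresses.
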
 
\begin{proof}
Let $W_{v_s}^j=\{w^j_1,\ldots,w^j_{p_j}\}$ be ordered lexicographically. From Proposition \ref{prop5.3}$(c)$ and Lemma \ref{lem34}, the acyclic matching on $\Sigma^{v_s,j}$ is $M_{1,j,v_s}^k=\bigsqcup\limits_{t=1 }^{p_j} M(\Sigma^{v_s,j}_{\{w^j_1,w^j_2,\ldots,w^j_{t-1}\}})_{w^j_t} .$ The set of critical cells for this matching is $\Sigma^{v_s,j}_{W_{v_s}^j}$. From Proposition \ref{prop5.3}$(b)$,  $ \#C_j=\frac{(k+1)(k+2)}{2}=\#\big{\{}\bigsqcup\limits_{v_s \in V_{u}^k} \Sigma^{v_s,j}_{W_{v_s}^j}\big{\}}$. Using Lemmas \ref{lem34} and \ref{lem5.1}$(iii)$, $M_{1,j}^k=\bigsqcup\limits_{v_s \in V_{u}^k} M_{1,j,v_s}^k$ is an acyclic matching on $\P_{1,j}^k$ with $\frac{(k+1)(k+2)}{2}$ critical cells of dimension $k$.
\end{proof}

In Lemma \ref{lem5.1}, an acyclic matching on $\P_3^k\sm \P_2^k$ is constructed. To define an acyclic matching on $\P_3^k$, we now define one on $\bigsqcup\limits_{i \in [k+4] } \bigsqcup\limits_{j\in J_i} \Q_{i,j}^k$ (Proposition \ref{prop51}). As in the case of the ${\P_{i,j}^k}{'}$s, we partition the ${\Q_{i,j}^k}{'}$s into smaller sets and then construct acyclic matchings on them.

\begin{lem}\label{lem5.4} For $i \in [k+4]$ and $J_i$ as defined in Equation \eqref{eq41}, the following are poset maps:
\begin{enumerate}
  \item[(i)] $ \varphi^\prime : \P_2^k \longrightarrow \{a_1^\prime  >  a_2^\prime >  a_3^\prime > \ldots > a_{k+4}^\prime > a^\prime \}$ defined by 
  \[
  (\varphi^\prime)^{-1}(x) = \left\{\def\arraystretch{1.2}%
  \begin{array}{@{}c@{\quad}l@{}}
    \bigsqcup\limits_{j \in J_i} \Q_{i,j}^k& \text{if }x=a_{i}^\prime, \ i \in [k+4],\\
    \P^k_1& \text{if $x=a^\prime$.}
  \end{array}\right.
\]
  
    \item[(ii)] $\varphi_i^\prime:\bigsqcup\limits_{j\in J_i} \Q_{i,j}^k \longrightarrow \{c_{t_1}^\prime >c_{t_2}^\prime>\ldots > c_{t_{r_i}}^\prime\}$, where $J_i=\{t_1<t_2<\ldots < t_{r_i}\}$, defined by $(\varphi_i^\prime)^{-1}(c_j^\prime)= \Q_{i,j}^k $.
    
  \item[(iii)] $\varphi_{i,j}^\prime:\Q_{i,j}^k \longrightarrow \{b_{v_1}>b_{v_2}> \ldots >b_{v_m}\}$, where $V_{u}^k=\{v_1<v_2<\ldots <v_m\}$, defined by $(\varphi_{i,j}^\prime)^{-1}(b_{v_s})=\{\si \in \Q_{i,j}^k, v_s \in \si \ | \ v_t \nin \si \ \forall \ t<s \}$ for $s \in [m]$.

  \end{enumerate}
\end{lem}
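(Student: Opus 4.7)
The plan is to imitate the three-part proof of Lemma \ref{lem5.1} almost verbatim, with $\P_2^k$ replacing $\P_3^k$ as the ambient poset and $\{\Q_{i,j}^k\}$ playing the role of the $\{\P_{i,j}^k\}$. The governing observation throughout is that $\tau \subseteq \si$ implies $S_\tau \subseteq S_\si$, and hence $C_\si \subseteq C_\tau$, while the neighborhood-complex condition gives $|C_\si| \geq 3$ via the existence of a common neighbor.

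For part $(i)$, I would first show that if $\tau \in \Q_{i,j}^k$ and $\tau \subseteq \si$ with $\si \in \P_2^k$, then $\si \in \Q_{i,j}^k$. The key point is that $\Q_{i,j}^k \subset \P_2^k \sm \P_1^k$, and since $\P_1^k = \F(\N(S_{3,k}))$ is a simplicial complex (closed under subfaces), $\tau \nin \P_1^k$ forces $\si \nin \P_1^k$. By the definition of $\P_2^k$ (see Equation \eqref{eq5.1}), this forces $|C_\si| = 4$, and combined with $C_\si \subseteq C_\tau = \{i,i+1,j,j+1\}$ gives $C_\si = C_\tau$, so $\si \in \Q_{i,j}^k$ and $\varphi^\prime(\si) = \varphi^\prime(\tau) = a_i^\prime$. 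The remaining case $\tau \in \P_1^k$ is trivial because $\varphi^\prime(\tau) = a^\prime$ is the minimum of the codomain, so $\varphi^\prime(\si) \geq a^\prime$ holds automatically.

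Part $(ii)$ follows immediately from the same argument: if $\tau \subseteq \si$ with $\tau \in \Q_{i,j_1}^k$ and $\si \in \Q_{i,j_2}^k$, both have $|C|=4$, so $C_\si \subseteq C_\tau$ forces equality, and hence $j_1 = j_2$. Thus $\varphi_i^\prime$ is constant on comparable pairs, which makes it a poset map. For $(iii)$, I would first verify well-definedness by checking that every $\si \in \Q_{i,j}^k$ has some vertex in $V_u^k$: if instead $\si \subset V(SG_{3,k})$, then every edge of $KG_{3,k}$ from a common neighbor to a vertex of $\si$ would automatically be an edge of $S_{3,k}$ (since every endpoint in $\si$ is stable), forcing $\si \in \P_1^k$, a contradiction. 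The order-preservation step is then essentially identical to Lemma \ref{lem5.1}$(iii)$: if $\tau \in (\varphi_{i,j}^\prime)^{-1}(b_{v_s})$ and $\tau \subseteq \si$, then $v_s \in \si$, so either $v_s$ is the minimum-index unstable vertex of $\si$ (giving $\varphi_{i,j}^\prime(\si) = b_{v_s}$) or some $v_r \in \si$ with $r < s$ (giving $\varphi_{i,j}^\prime(\si) = b_{v_r} > b_{v_s}$).

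No single step is technically hard; the only mild obstacle is to recognize that it is precisely the simplicial-complex closure of $\P_1^k$ that forces $\si \nin \P_1^k$ in $(i)$, which is the one place where the proof deviates slightly from that of Lemma \ref{lem5.1}. Everything else reduces to routine bookkeeping about the values $|C_\si| \in \{3,4\}$ and the lexicographic ordering of $V_u^k$.
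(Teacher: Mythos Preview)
Your proposal is correct and follows exactly the approach the paper intends: the paper's own proof of Lemma~\ref{lem5.4} consists of the single sentence ``This proof is similar to that of Lemma~\ref{lem5.1},'' and you have faithfully spelled out that similarity. Your observation that the downward closure of $\P_1^k$ is the one extra ingredient needed in part~$(i)$ (to force $|C_\si|=4$ rather than $3$) is precisely the minor adaptation the paper leaves implicit.
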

\begin{proof}
This proof is similar to that of Lemma \ref{lem5.1}.
\end{proof}
 We can now define an acyclic matching on $\Q_{i,j}^k$, $i \in [k+4]$, $j \in J_i$.
\begin{lem}\label{lem5.5} 
For $i \in [k+4]$ and $j \in J_i$, there exists an acyclic matching on $\Q_{i,j}^k$ with exactly $\frac{k(k+1)}{2}$ critical cells of dimension $k-1$.
\end{lem}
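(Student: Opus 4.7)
The plan is to reduce Lemma~\ref{lem5.5} to Lemma~\ref{lem5.3} via a poset isomorphism that decreases the parameter $k$ by one. By Proposition~\ref{prop2.1}$(ii)$, the cyclic shift $\sigma \mapsto \sigma \oplus (1-i)$ gives a poset isomorphism $\Q_{i,j}^k \cong \Q_{1, j-i+1}^k$ with $j - i + 1 \in J_1 = \{3,\dots,k+5\}$, so it suffices to construct the required matching on $\Q_{1,j}^k$ for each $j \in J_1$.

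I would then define $g : \Q_{1,j}^k \longrightarrow \P^{k-1}_{i'',j''}$ by $g(\sigma) = \sigma \oplus (k+5-j)$, viewed inside $\N(KG_{3,k-1})$ with ground set $[k+5]$ and addition mod $k+5$. The shift sends $C_\sigma = \{1,2,j,j+1\}$ to $\{k+6-j,\ k+7-j,\ k+5,\ k+6\}$ in $[k+6]$, so every vertex of $g(\sigma)$ is a $3$-subset of $[k+4] \setminus \{k+6-j, k+7-j\}$. The consecutive pairs inside $[k+4]$ are the same mod $k+6$ and mod $k+5$, so such a $3$-subset is unstable in $KG_{3,k}$ if and only if it is unstable in $KG_{3,k-1}$. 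Dropping the unused element $k+6$ realises $g(\sigma)$ as a simplex in $\N(KG_{3,k-1})$ with $C_{g(\sigma)} = \{k+6-j, k+7-j, k+5\}$, and $g(\sigma)$ contains an unstable vertex precisely when $\sigma$ does, forcing $g(\sigma) \in \P_3^{k-1}\setminus \P_1^{k-1}$. The appropriate parametrization is $(i'',j'') = (k+6-j,\ k+5)$ for $j \in \{3,\dots,k+4\}$, and $(i'',j'') = (k+5,\ 2)$ for $j = k+5$, since in this boundary case $\{1,2,k+5\}$ consists of three cyclically consecutive elements mod $k+5$; in both subcases $j'' \in I_{i''}$ with $I_{i''}$ taken mod $k+5$.

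The map $g$ is a dimension-preserving poset isomorphism onto its image. Applying Proposition~\ref{prop2.1}$(ii)$ once more, $\P^{k-1}_{i'',j''}$ is isomorphic to $\P^{k-1}_{1,j'''}$ for some $j''' \in \{3,\dots,k+4\}$, which is exactly $I_1$ for the parameter $k-1$. Lemma~\ref{lem5.3}, applied with $k-1$ in place of $k$, furnishes an acyclic matching on $\P^{k-1}_{1,j'''}$ with exactly $\frac{k(k+1)}{2}$ critical cells of dimension $k-1$. Pulling this matching back through the composite poset isomorphism produces the acyclic matching on $\Q_{i,j}^k$ asserted by the lemma, with the critical cell count and dimension intact.

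The principal obstacle is the careful bookkeeping of the two moduli $k+6$ and $k+5$, in particular at the boundary value $j = k+5$ where the image complement $C_{g(\sigma)}$ wraps around the identified label and forces the alternative choice of $(i'',j'')$. Once this is in hand, Lemma~\ref{lem5.5} combines with Lemmas~\ref{lem5.1} and~\ref{lem5.3} to yield the complete acyclic matching on $\P_3^k$ that drives the proof of Theorem~\ref{thm3}.
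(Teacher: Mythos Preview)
Your argument is correct and is essentially the paper's own proof: shift by $k+5-j$ so that $\{j,j+1\}$ lands on $\{k+5,k+6\}$, drop the label $k+6$ to realise the simplex in $\N(KG_{3,k-1})$ with a three-element complement, and pull back the matching supplied by Lemma~\ref{lem5.3}. The paper skips your preliminary normalisation to $i=1$ (it applies the shift directly to $\Q_{i,j}^k$, landing in $\P^{k-1}_{i+k+5-j,\,k+5}$) and does not isolate the boundary case $j=k+5$ or the modulus-change check for unstable vertices, but the substance is identical.
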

\begin{proof}
If $\si \in \Q_{i,j}^k$, then $C_\si=\{i,i+1,j,j+1\}$ and $C_{\si\oplus (k+5-j)}=\{i+k+5-j,i+k+6-j,k+5,k+6\}$. From Proposition \ref{prop2.1}, $\si \cap V_{u}^k\neq \emptyset$ implies that $\si \oplus (k+5-j) \cap V_{u}^k\neq \emptyset$. Thus, $\si \in \Q_{i,j}^k$ if and only if $\si \oplus (k+5-j) \in \P_{i+k+5-j,k+5}^{k-1}$.

From Lemma \ref{lem5.3}, $M_{i+k+5-j,k+5}^{k-1}$ is an acyclic matching on $\P_{i+k+5-j,k+5}^{k-1}$ with exactly $\frac{k(k+1)}{2}$ critical cells of dimension $k-1$. Therefore, $N_{i,j}^k=M_{i+k+5-j,k+5}^{k-1}\ominus (k+5-j)$ is an acyclic matching on $\Q_{i,j}^k$ with $\frac{k(k+1)}{2}$ critical cells of dimension $k-1$.
\end{proof}

Before proving the main result, we compute some homology groups.

\begin{lem}\label{lem5.6}
 
 \begin{enumerate} 
 \item[$(i)$] \[
  H_{p}(X_3^k,X^k_2)= \left\{\def\arraystretch{1.2}%
  \begin{array}{@{}c@{\quad}l@{}}
    \bigoplus\limits_{\frac{(k+1)(k+2)(k+3)(k+6)}{2}} \mathbb{Z} & \text{if $p=k$, }\\
    0 & \text{otherwise.}\\
  \end{array}\right.
\]
 \item[$(ii)$] \[
 H_{p}(X^k_2,X^k_1)= \left\{\def\arraystretch{1.2}%
  \begin{array}{@{}c@{\quad}l@{}}
    \bigoplus\limits_{\frac{k(k+1)(k+3)(k+6)}{4}} \mathbb{Z} & \text{if $p=k-1,$ }\\
    0 & \text{otherwise.}\\
  \end{array}\right.
\]
 
 \end{enumerate}
 \end{lem}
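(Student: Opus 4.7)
The plan is to compute both relative homology groups via discrete Morse theory applied to the relative face poset in each case, using the matchings already constructed in Lemmas~\ref{lem5.3} and \ref{lem5.5} assembled by the cyclic symmetry of $KG_{3,k}$.

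For part $(i)$, I begin with the observation that $X_2^k$ is a subcomplex of $X_3^k$, so $(X_3^k, X_2^k)$ is a good pair and Proposition~\ref{prop3.1} reduces the problem to computing $\tilde{H}_{*}(X_3^k/X_2^k)$. By Proposition~\ref{prop51}, the relative face poset splits as $\bigsqcup_{i\in[k+6]}\bigsqcup_{j\in I_i}\mathcal{P}_{i,j}^k$. For each $(i,j)$, setting $j'=j-(i-1)$ places $j'\in I_1$, and Proposition~\ref{prop2.1}$(ii)$ tells me that $\sigma\mapsto\sigma\oplus(i-1)$ is a containment-preserving bijection $\mathcal{P}_{1,j'}^k\to\mathcal{P}_{i,j}^k$. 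Transporting $M_{1,j'}^k$ from Lemma~\ref{lem5.3} along this bijection yields an acyclic matching $M_{i,j}^k$ on $\mathcal{P}_{i,j}^k$ with exactly $\tfrac{(k+1)(k+2)}{2}$ critical cells, all of dimension $k$. The poset maps $\varphi,\varphi_i$ of Lemma~\ref{lem5.1} together with the Cluster Lemma (Lemma~\ref{lem34}) then assemble the $M_{i,j}^k$ into a single acyclic matching on $\mathcal{F}(X_3^k)\setminus\mathcal{F}(X_2^k)$ whose critical set lives purely in dimension $k$. The count
\[
|I_i|=k+3,\qquad (k+6)(k+3)\cdot\tfrac{(k+1)(k+2)}{2}=\tfrac{(k+1)(k+2)(k+3)(k+6)}{2}
\]
records the number of critical $k$-cells. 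By the relative form of Forman's theorem (Corollary~\ref{rem36} applied to the CW quotient $X_3^k/X_2^k$, whose cells are exactly the cells of $X_3^k\setminus X_2^k$ plus a basepoint), $X_3^k/X_2^k$ is homotopy equivalent to a wedge of that many $k$-spheres, and Corollary~\ref{cor3.5} then delivers the claimed values of $H_p(X_3^k,X_2^k)$.

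Part $(ii)$ proceeds identically. The relative poset decomposes as $\bigsqcup_{i\in[k+4]}\bigsqcup_{j\in J_i}\mathcal{Q}_{i,j}^k$, each $\mathcal{Q}_{i,j}^k$ carries the matching of Lemma~\ref{lem5.5} with $\tfrac{k(k+1)}{2}$ critical $(k-1)$-cells, and Lemma~\ref{lem5.4} plus the Cluster Lemma stitch them together. Using $|J_1|=k+3$ and $|J_s|=k+5-s$ for $2\le s\le k+4$, the total number of $(i,j)$ pairs is
\[
(k+3)+\sum_{s=2}^{k+4}(k+5-s)=(k+3)+\tfrac{(k+3)(k+4)}{2}=\tfrac{(k+3)(k+6)}{2},
\]
so the total count of critical $(k-1)$-cells equals $\tfrac{k(k+1)(k+3)(k+6)}{4}$, which gives the claimed homology via the same Morse argument applied to $X_2^k/X_1^k$.

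The principal obstacle I anticipate is the verification that the shift of Proposition~\ref{prop2.1}$(ii)$ transports the specific matching $M_{1,j'}^k$ of Lemma~\ref{lem5.3} faithfully to a valid acyclic matching on $\mathcal{P}_{i,j}^k$, and that no cycle arises when the per-piece matchings are united across distinct pieces. The poset-map architecture of Lemmas~\ref{lem5.1} and~\ref{lem5.4} is exactly what controls this, but one must check that faces shared between different pieces $\mathcal{P}_{i,j}^k$ or $\mathcal{Q}_{i,j}^k$ (for instance when a cell in a lower fiber is a common face of cells in two different upper fibers) cannot produce alternating paths; this is guaranteed once $\varphi$ (and $\varphi'$) is confirmed to be order-preserving, after which the remaining work is the bookkeeping of the index sets $I_i$ and $J_i$ already recorded above.
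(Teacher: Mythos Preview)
Your proposal is correct and follows essentially the same route as the paper: decompose $\P_3^k\setminus\P_2^k$ (resp.\ $\P_2^k\setminus\P_1^k$) into the pieces $\P_{i,j}^k$ (resp.\ $\Q_{i,j}^k$), transport the matching of Lemma~\ref{lem5.3} (resp.\ Lemma~\ref{lem5.5}) to each piece by the cyclic shift of Proposition~\ref{prop2.1}, glue via the Cluster Lemma using the poset maps of Lemma~\ref{lem5.1} (resp.\ Lemma~\ref{lem5.4}), and then read off the relative homology from the resulting critical cells, with the same index-set counts $(k+3)(k+6)$ and $\tfrac{(k+3)(k+6)}{2}$.

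The only presentational difference is in the final step: the paper applies Forman's Theorem~\ref{lem35} to the simplicial complex $X_3^k$ itself (with all of $\P_2^k$ left critical) to obtain $X_3^k\simeq Y_3^k$ with $Y_3^k=X_2^k\cup\{\alpha_r\}$, and then quotients the pair $(Y_3^k,X_2^k)$; you instead pass first to the CW quotient $X_3^k/X_2^k$ and invoke Morse theory there. Both are valid, but note that Corollary~\ref{rem36} as stated is for simplicial complexes, so your version implicitly uses the CW-complex form of Forman's theorem; the paper's ordering sidesteps this technicality.
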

 
 \begin{proof}
Observe that, $ \# \{(i,j)\ | \ i \in [k+6], j \in I_i\}= (k+3)(k+6)$. In Lemma \ref{lem5.3}, we have constructed an acyclic matching on $\P_{1,j}^k=\varphi_1^{-1}(c_j)$ with $\frac{(k+1)(k+2)}{2}$ critical cells of dimension $k$, for each $j \in I_1$. Therefore, using Proposition \ref{prop2.1}, we get an acyclic matching $M_{i,j}^k$ on $\P_{i,j}^k=\varphi_i^{-1}(c_j)$ with $\frac{(k+1)(k+2)}{2}$ critical cells of dimension $k$, for each $ i \in [k+6], \ j \in I_i$. 

Using Lemma \ref{lem5.1}$(i)$ and $(ii)$,  the set of critical cells  associated to the matching  on $\P_3^k$ contains the cells of $\P_2^k$  and an additional $\frac{(k+1)(k+2)(k+3)(k+6)}{2}$ cells of dimension $k$. 
Therefore, from Theorem \ref{lem35}, $X_3^k$ is homotopy equivalent to a cell complex $Y_3^k$, where $Y_3^k= X_2^k \bigcup \big{\{} \alpha_r \ | \ r \in [\frac{(k+1)(k+2)(k+3)(k+6)}{2}]\big{\}} $, with each $\alpha_r$ being a $k$-cell. The quotient space $Y_3^k/X_2^k$ is homotopy equivalent to a wedge of $\frac{(k+1)(k+2)(k+3)(k+6)}{2}$ spheres of dimension $k$. 

Since $(Y_3^k,X_2^k)$ is a good pair, $H_p(Y_3^k/X^k_2) \cong H_p(Y_3^k,X^k_2) \cong H_{p}(X_3^k,X_2^k)$ (Proposition \ref{prop3.1}). This proves Lemma \ref{lem5.6}$(i)$.
 
 It is a simple observation that $\#\{(i,j)\ | \ i \in [k+4], \ j \in J_i\}=\frac{(k+3)(k+6)}{2}$. From Lemmas \ref{lem5.4} and \ref{lem5.5}, the set of critical cells associated to the acyclic matching on $ \P_2^k$ are the cells of $\P_1^k$ and a further $\frac{k(k+1)(k+3)(k+6)}{4}$ cells of dimension $k-1$. By arguments similar to those in $(i)$, Lemma \ref{lem5.6}$(ii)$ is proved.
 \end{proof}
 \subsection{Proof of Theorem \ref{thm3}}
 \begin{proof} Longueville (Proposition $2.4$, page $43$, \cite{Markde12}) proved that $X_3^k  =\N(KG_{3,k}) \simeq \bigvee\limits_{t}S^k$ for some $t$. Therefore,
 \begin{equation}\label{eq5.7}
 H_{p}(X_3^k)\cong \left\{\def\arraystretch{1.2}%
  \begin{array}{@{}c@{\quad}l@{}}
    \bigoplus\limits_{t} \mathbb{Z} & \text{if $p=k$, }\\
    0 & \text{otherwise.}\\
  \end{array}\right.
\end{equation}

 Since $(X_3^k,X_1^k)$ is a good pair, $H_p(X_3^k/X^k_1) \cong H_p(X_3^k,X^k_1)$ (Proposition \ref{prop3.1}). 
 
From Corollary \ref{cor1.2}, $X^k_1 \simeq X^k_0 \simeq S^k$. Thus, using Equation \eqref{eq5.7}, we observe that
 \begin{equation}\label{eq5.12}
 H_{p}(X_3^k,X^k_1)\cong \left\{\def\arraystretch{1.2}%
  \begin{array}{@{}c@{\quad}l@{}}
    \bigoplus\limits_{t-1} \mathbb{Z} & \text{if $p=k$, }\\
    0 & \text{otherwise.}\\
  \end{array}\right.
\end{equation}
 
The triple $X^k_1 \subset X^k_2\subset X_3^k$ gives the long exact sequence:
 \begin{equation}\label{eq5.8}
 \begin{array}{@{}c@{\quad}l@{}}
    0 \longrightarrow H_k(X^k_2,X_1^k)=0 \longrightarrow H_k(X_3^k,X^k_1) \longrightarrow H_k(X_3^k,X^k_2)\longrightarrow H_{k-1}(X^k_2,X_1^k) \\
    \longrightarrow H_{k-1}(X_3^k,X^k_1)=0\longrightarrow H_{k-1}(X_3^k,X^k_2)=0\longrightarrow 0. \\
  \end{array}
 \end{equation}
 
Here, $H_{k-1}(X^k_2,X_1^k)$ is a free  group (Lemma \ref{lem5.6}$(ii)$). Therefore, the short exact sequence in Equation \eqref{eq5.8} splits (using Lemma \ref{lem3.4}). Thus,
 
 \begin{equation*}
H_k(X_3^k,X^k_2) \cong H_k(X_3^k,X^k_1) \bigoplus H_{k-1}(X^k_2,X_1^k). 
 \end{equation*}
 
 From Lemma \ref{lem5.6}$(ii)$,
 \begin{equation}\label{eq5.9}
\bigoplus\limits_{\frac{(k+1)(k+2)(k+3)(k+6)}{2}} \mathbb{Z} \cong  H_k(X_3^k,X^k_1) \bigoplus\Big(\bigoplus\limits_{\frac{k(k+1)(k+3)(k+6)}{4}} \mathbb{Z}\Big).
 \end{equation} 
 
Using Equations \eqref{eq5.12} and \eqref{eq5.9}, we conclude that
\begin{equation*} \frac{(k+1)(k+2)(k+3)(k+6)}{2}=t-1+\frac{k(k+1)(k+3)(k+6)}{4}.
\end{equation*}

Therefore, 
\begin{equation*}
\begin{split}
t & =\frac{(k+1)(k+2)(k+3)(k+6)}{2}-\frac{k(k+1)(k+3)(k+6)}{4}+1 \\
&=\frac{(k+1)(k+3)(k+4)(k+6)}{4}+1.
\end{split}
\end{equation*} 

This completes the proof of Theorem \ref{thm3}.
\end{proof}

\bibliographystyle{alpha}

\end{document}